\newtheorem{theorem}{Theorem}[section]
\newtheorem{lemma}[theorem]{Lemma}
\newtheorem{proposition}[theorem]{Proposition}
\newtheorem{corollary}[theorem]{Corollary}
\theoremstyle{definition}
\numberwithin{equation}{section}
\newcommand{\Oseen}{\mathcal{O}}
\newcommand{\IR}{\mathbb{R}}
\newcommand{\IC}{\mathbb{C}}
\newcommand{\IZ}{\mathbb{Z}}
\newcommand{\IN}{\mathbb{N}}
\newcommand{\IQ}{\mathbb{Q}}
\newcommand{\Qbar}{\overline{\IQ}}
\newcommand{\B}{\mathscr{B}}
\newcommand{\D}{D_K}
\newcommand{\del}{\eta_{\ell}(K)}
\newcommand{\p}{\mathfrak{p}}
\newcommand{\SX}{S_X}
\newcommand{\J}{\mathscr{I}}
\newcommand{\ppp}{\mathfrak{p}}
\newcommand{\qqq}{\mathfrak{q}}
\newcommand{\vx}{\mathbf{x}}
\newcommand{\Ndel}{N_{\eta_{\ell}}}
\newcommand{\Ndisc}{N_D}
\newcommand{\aaa}{\mathfrak{a}}
\newcommand{\Sab}{S_{4}(a,b)}
\newcommand{\ce}{c_1}
\newcommand{\cz}{c_2}
\newcommand{\cd}{c_3}
\newcommand{\cv}{c_4}
\newcommand{\cf}{c_5}
\DeclareMathOperator{\Cl}{Cl}
\definecolor{dblackcolor}{rgb}{0.0,0.0,0.0}
\definecolor{dbluecolor}{rgb}{0.01,0.02,0.7}
\definecolor{dgreencolor}{rgb}{0.2,0.4,0.0}
\definecolor{dgraycolor}{rgb}{0.30,0.3,0.30}
\begin{document}

\title{Averages and higher moments for the $\ell$-torsion in class groups}

\author{Christopher Frei}
\address{TU Graz\\
Institute of Analysis and Number Theory\\
Steyrergasse 30/II, 8010 Graz\\
Austria}
\email{frei@math.tugraz.at}

\author{Martin Widmer}
\address{Department of Mathematics\\ 
Royal Holloway, University of London\\ 
TW20 0EX Egham\\ 
UK}
\email{martin.widmer@rhul.ac.uk}

\subjclass{Primary 11R29, 11R65, 11R45; Secondary 11G50}
\date{6 November 2020}

\dedicatory{}

\keywords{$\ell$-torsion, class group, moments, Dihedral extensions, counting, small height}

\begin{abstract}
  We prove upper bounds for the average size of the $\ell$-torsion $\Cl_K[\ell]$ of the class group of $K$, as $K$ runs through certain natural families of number fields and $\ell$ is a positive integer. We refine a key argument, used in almost all results of this type, which links upper bounds for $\Cl_K[\ell]$ to the existence of many primes splitting completely in $K$ that are small compared to the discriminant of $K$. Our improvements are achieved through the introduction of a new family of specialised invariants of number fields to replace the discriminant in this argument, in conjunction with new counting results for these invariants. This leads to significantly improved upper bounds for the average and sometimes even higher moments of $\Cl_K[\ell]$ for many families of number fields $K$ considered in the literature, for example, for the families of all degree-$d$-fields for $d\in\{2,3,4,5\}$ (and non-$D_4$ if $d=4$). As an application of the case $d=2$ we obtain the best upper bounds for the number of $D_p$-fields of bounded discriminant, for primes $p>3$. 
\end{abstract}

\maketitle

\section{Introduction}\label{introductionchapter3}
In this paper, we provide bounds for the average and higher moments of the size of the $\ell$-torsion $\Cl_K[\ell]=\{[\aaa]\in\Cl_K\: ;\: [\aaa]^\ell=[\Oseen_K]\}$ of the ideal class groups of number fields $K$ in certain families, for arbitrary $\ell\in\IN=\{1,2,3,\ldots\}$. Throughout, we order number fields $K$ by the absolute value $\D$ of their discriminant. For real-valued maps $f$ and $g$ with common domain we mean by $f(t)\ll_a g(t)$ that there exists a positive constant $C=C(a)$, depending only on $a$, such that $|f(t)|\leq C|g(t)|$ for all $t$
in the domain. Throughout this article we assume $X\geq 2$. To give the reader a quick taste of the results in this paper, here is our first theorem concerning quadratic fields.

\begin{theorem}\label{thm2}
Let $\varepsilon>0$ and $k\geq 0$ be real numbers and $\ell\in\IN$. 
As $K$ ranges over all quadratic number fields with $\D\leq X$ we have
$$\sum_{K}\#\Cl_K[\ell]^k \ll_{\ell,k,\varepsilon}
X^{\frac{k}{2}+1-\min\left\{1,\ \frac{k}{\ell+2}\right\}+\varepsilon}.$$
\end{theorem}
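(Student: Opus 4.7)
Plan: The argument combines two ingredients, both developed in the body of the paper. The first is a refined Ellenberg--Venkatesh-type pointwise bound
$$\#\Cl_K[\ell] \ll_{\ell,\varepsilon} \del^{\,1/2+\varepsilon}$$
for every quadratic field $K$, in which the specialised invariant $\del$ replaces the discriminant $\D$. By construction $\del \leq \D$, with strict savings whenever $K$ admits small rational primes splitting completely; the bound is obtained by recasting the standard small-prime-representative argument so that the contribution of these split primes is absorbed into the definition of $\del$. The second is a counting estimate for the joint distribution of $\D$ and $\del$ along quadratic fields, controlling (in the form of $\Ndel(X,Y)$, say) how many quadratic $K$ with $\D\le X$ can have $\del$ in each range.

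Granting these inputs, the proof is essentially an optimisation. One writes
$$\sum_K \#\Cl_K[\ell]^k \ll_\varepsilon X^\varepsilon \sum_K \del^{k/2}$$
and estimates the right-hand side by a dyadic decomposition on $\del$, applying the counting bound at each level. The optimal break point sits at $Y^{\ast} = X^{\ell/(\ell+2)}$: for quadratic fields with $\del \le Y^{\ast}$ (``good'' fields) the pointwise bound gives a contribution of size at most
$$X\cdot Y^{\ast\,k/2+\varepsilon} = X^{\,1+k\ell/(2(\ell+2))+\varepsilon}=X^{\,k/2+1-k/(\ell+2)+\varepsilon},$$
whereas for the (few) quadratic fields with $\del>Y^{\ast}$ one reverts to the Minkowski bound $\#\Cl_K[\ell]\leq\#\Cl_K\ll\D^{\,1/2+\varepsilon}$ and uses the counting bound to control their number. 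The two contributions balance at $Y^{\ast}$, and for $k \ge \ell+2$ the $\min$ in the exponent saturates at $1$, matching the regime $X^{\,k/2+\varepsilon}$ delivered by Minkowski on the exceptional set.

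The principal obstacle is the counting estimate for $\del$ in the right form. For quadratic $K$, ``$\del$ is not small'' translates via the Kronecker symbol into the assertion that the character sum $\sum_{p\le Y}\chi_\D(p)$ is unusually negative, and one must bound the number of quadratic discriminants for which this occurs sharply enough to produce the exponent $\ell+2$ in the theorem. This is where the delicate new analytic work lies: a naive second-moment large sieve would produce the weaker saving $k/(2\ell)$ rather than $k/(\ell+2)$, so obtaining the sharp form demands either a higher-moment large-sieve input or a more structural counting argument tailored to the invariant $\del$.
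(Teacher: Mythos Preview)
Your proposed pointwise bound $\#\Cl_K[\ell]\ll_{\ell,\varepsilon}\del^{1/2+\varepsilon}$ is not what the paper proves, and in fact it is false. If $\#\Cl_K[\ell]$ is large then $\Cl_K[\ell]$ has many cosets available, so already among the very first few split primes $p_1,p_2$ one typically finds $[\ppp_1\ppp_2^{-1}]\in\Cl_K[\ell]$, making $\del\le\max(p_1,p_2)^\ell$ tiny while $\#\Cl_K[\ell]$ stays large. The refined key lemma (Proposition~\ref{keylemma}) still has $\D^{1/2+\varepsilon}$ on the right, not $\del^{1/2+\varepsilon}$: it reads $\#\Cl_K[\ell]\ll \D^{1/2+\varepsilon}M^{-1}$, where $M$ is the number of degree-one primes of norm at most $\del^{\delta}$ for some $\delta<1/\ell$. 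The point of $\del$ is the opposite of what you say: \emph{large} $\del$ is good, because it guarantees a long range $[2,\del^{\delta}]$ in which to harvest split primes for the $M^{-1}$ saving.

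Consequently your dyadic scheme is inverted relative to the paper. The fields with small $\del$ are the \emph{bad} ones, handled by the trivial bound $\D^{1/2+\varepsilon}$; what makes them harmless is that there are few of them, via the elementary counting Lemma~\ref{countingprop}: an $\alpha$ realising $\del$ has minimal polynomial $p^\ell x^2+a_1x\pm q^\ell$, so $\Ndel(S,X)\ll X^{1+2/\ell}$. This exponent $\theta=1+2/\ell$ is exactly what produces the $\ell+2$ in the theorem, through $\rho k/(\ell\theta)=k/(\ell+2)$ in Proposition~\ref{mainprop2}. The other ingredient is not a large-sieve or character-sum estimate at all: for quadratic fields the paper simply quotes the effective Chebotarev input from \cite{PTW} (Theorem~\ref{exceptionalfields2}, case~1), which gives $\tau=0$ for the set $\B_S$ of fields lacking split primes. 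The actual proof is then one line: apply Proposition~\ref{mainprop2} with $\theta=1+2/\ell$, $\rho=1$, $\tau=0$, and dyadically sum. Your final paragraph about needing ``higher-moment large-sieve input'' to reach $\ell+2$ rather than $2\ell$ misidentifies the source of the gain; it comes from the restricted shape of the minimal polynomial, not from analytic averaging.
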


We now discuss an application of Theorem \ref{thm2}. For a transitive permutation group $G$ of degree $d$ and $X>0$, let $N(d,G,X)$ be the number of field extensions $K/\IQ$ of degree $d$ within a fixed algebraic closure  $\Qbar$ with $\D\leq X$ and whose normal closure has Galois group isomorphic to $G$ as a permutation group. Malle's conjecture \cite{Ma,MR2068887} predicts an asymptotic formula for $N(d,G,X)$ as $X\to\infty$. Let $p$ be an odd prime and $D_p$, $D_p(2p)$ the Dihedral group of order $2p$ and its regular permutation representation respectively. In these cases, Malle's conjecture predicts the formulas
\begin{equation*}
N(p,D_p,X)\sim c_{p}X^{\frac{2}{p-1}} \quad\text{ and }\quad N(2p,D_{p}(2p),X)\sim c_{2p}X^{\frac{1}{p}}
\end{equation*}
with positive constants $c_p, c_{2p}$ (see \cite[Example after Conjecture 1.1]{Klueners}). Currently the best upper bounds for $p>3$ are
\begin{equation*}
N(p,D_p,X)\ll_{p,\varepsilon} X^{\frac{3}{p-1}-\frac{1}{p(p-1)}+\varepsilon} \quad\text{ and }\quad N(2p,D_{p}(2p),X)\ll_{p,\varepsilon} X^{\frac{3}{2p}+\varepsilon},
\end{equation*}
the first due to Cohen and Thorne \cite[Theorem 1.1]{CohenThorne}, the second due to Kl\"uners \cite[Theorem 2.7]{Klueners}. As an immediate consequence of Kl\"uners' method and the case $k=1$ in Theorem \ref{thm2}, we can improve both bounds for all primes $p>3$. 

\begin{corollary}\label{cor1}
Let $p$ be an odd prime and $\varepsilon>0$. Then we have
\begin{alignat*}1
N(p,D_p,X)\ll_{p,\varepsilon} X^{\frac{3}{p-1}-\frac{2}{(p+2)(p-1)}+\varepsilon}\quad\text{ and }\quad N(2p,D_{p}(2p),X)\ll_{p,\varepsilon} X^{\frac{3}{2p}-\frac{1}{p(p+2)}+\varepsilon}.
\end{alignat*}
\end{corollary}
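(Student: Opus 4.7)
I would run the argument of Kl\"uners \cite{Klueners}, substituting Theorem \ref{thm2} with $k=1$ for his use of the trivial bound \eqref{eq:trivial} on $\#\Cl_K[p]$. Every $D_p$-Galois extension $\tilde{L}/\IQ$ contains a unique quadratic resolvent $K=\tilde{L}^{C_p}$, and $\tilde{L}/K$ is a cyclic degree-$p$ extension. Write $\mathfrak{f}$ for its conductor and $L$ for any degree-$p$ subfield of $\tilde{L}$. The conductor--discriminant formula applied to the two natural permutation representations of $D_p$ yields
\begin{equation*}
|D_{\tilde{L}}|=|D_K|^p\cdot N_{K/\IQ}(\mathfrak{f})^{p-1},\qquad |D_L|^2=|D_K|^{p-1}\cdot N_{K/\IQ}(\mathfrak{f})^{p-1}.
\end{equation*}
Hence in the unramified case $\mathfrak{f}=\Oseen_K$, the conditions $|D_L|\leq X$ and $|D_{\tilde{L}}|\leq X$ translate respectively to $|D_K|\leq X^{2/(p-1)}$ and $|D_K|\leq X^{1/p}$.

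For a fixed quadratic $K$, the cyclic degree-$p$ extensions $\tilde{L}/K$ that descend to $D_p$- (rather than $C_{2p}$-) extensions of $\IQ$ correspond by class field theory to order-$p$ subgroups of the ray class group $\Cl_K(\mathfrak{f})$ on which the nontrivial element of $\Gal(K/\IQ)$ acts by inversion. For $\mathfrak{f}=\Oseen_K$ these number at most $\#\Cl_K[p]$, and the contribution of nontrivial $\mathfrak{f}$ is handled in \cite{Klueners} by a dyadic sum over $N(\mathfrak{f})$ that is absorbed into the unramified one at the cost of a factor $X^\varepsilon$. One obtains master inequalities
\begin{equation*}
N(p,D_p,X)\ll_{p,\varepsilon}X^\varepsilon\!\!\sum_{|D_K|\leq X^{2/(p-1)}}\!\!\#\Cl_K[p],\quad N(2p,D_p(2p),X)\ll_{p,\varepsilon}X^\varepsilon\!\!\sum_{|D_K|\leq X^{1/p}}\!\!\#\Cl_K[p],
\end{equation*}
the sums ranging over quadratic fields.

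Finally, Theorem \ref{thm2} with $k=1$ and $\ell=p$ gives $\sum_{|D_K|\leq Y}\#\Cl_K[p]\ll_{p,\varepsilon}Y^{3/2-1/(p+2)+\varepsilon}$, and substituting $Y=X^{2/(p-1)}$ and $Y=X^{1/p}$ reproduces the two exponents claimed in the statement. The main obstacle, controlling the ramified contribution so that it does not dominate, is exactly what Kl\"uners' method achieves; the improvement over his bound stems entirely from the saving $-1/(p+2)$ in the average provided by Theorem \ref{thm2} over the trivial exponent $3/2$ from \eqref{eq:trivial}.
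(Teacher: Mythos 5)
Your strategy is the same as the paper's: combine Kl\"uners' reduction of $D_p$- and $D_p(2p)$-field counting to averaging $\#\Cl_K[p]$ over quadratic fields with the new average bound of Theorem \ref{thm2} (case $k=1$, $\ell=p$). The quadratic-resolvent/class-field-theory reduction you sketch, and the unramified-case translations $|D_K|\leq X^{2/(p-1)}$ and $|D_K|\leq X^{1/p}$, match Kl\"uners and give the correct final exponents, which you computed correctly.

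The one point I would flag is the intermediate ``master inequality''. Kl\"uners' bound, as the paper quotes it, has the form
\begin{equation*}
N(p,D_p,X)\leq\sum_{b^{p-1}\leq X}p^{\omega(b)}\sum_{\D\leq X^{2/(p-1)}/b^2}\#\Cl_K[p],
\end{equation*}
with the conductor variable $b$ left explicit, and the sum over $b$ has $\gg X^{1/(p-1)}$ terms. Bounding the inner sum monotonically by its value at $b=1$ and then summing $p^{\omega(b)}$ over $b$ would lose a factor of size $X^{1/(p-1)+\varepsilon}$, not $X^\varepsilon$; so the ``master inequality'' is not a free-standing consequence of Kl\"uners' estimate. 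What the paper does (and what actually gives the claimed exponents) is to substitute $\sum_{\D\leq Y}\#\Cl_K[p]\ll_{p,\varepsilon}Y^{3/2-1/(p+2)+\varepsilon}$ into each inner sum \emph{before} summing over $b$; the $b$-sum $\sum_b p^{\omega(b)}/b^{3-2/(p+2)+2\varepsilon}$ then converges (its exponent exceeds $1$), so the $b=1$ term dominates up to a constant. Your version puts the absorption of the $b$-sum before the application of Theorem \ref{thm2}, which is the only step that justifies it, so the order of the two steps should be swapped; once that is done, the two arguments are identical and the final exponents agree.
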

The special case $p=5$ was also considered by Larsen and Rolen \cite{LarsenRolen}.
They suggest to improve Kl\"uners' bound $X^{0.75+\varepsilon}$ \cite[Theorem 2.7]{Klueners} by counting integral points on a variety defined by a norm equation. While counting these points seems a difficult matter, their numerical experiments provide evidence that the number of these points is $\ll X^{0.698}$, which, if true,  would provide the same bound for  $N(5,D_5,X)$. The exponent ${0.7+\varepsilon}$ of Cohen and Thorne is just slightly above the latter.
Our bound is $X^{0.678...}$, and hence is slightly better than the bound suggested by the numerical experiments in \cite{LarsenRolen}.

\subsection{Background}\label{sec:background}
Let us provide here some context for Theorem \ref{thm2} and our further results. Denote the degree of the number field $K$ by $d$. Landau (see, e.g., \cite[Theorem 4.4]{Nark1980}) noticed that that the Minkowski bound implies the upper bound 
\begin{equation}\label{eq:landau}
  \#\Cl_K\ll_{d,\varepsilon}\D^{\frac{1}{2}+\varepsilon},
\end{equation}
for arbitrarily small $\varepsilon>0$. This bound is essentially sharp, and provides the ``trivial'' upper bound
for the $\ell$-torsion
\begin{equation}\label{eq:trivial}
  \#\Cl_K[\ell]\ll_{d,\varepsilon}\D^{\frac{1}{2}+\varepsilon}.
\end{equation}
However, a standard conjecture asserts that
\begin{equation}\label{conj:epsilon}
  \#\Cl_K[\ell]\ll_{d,\ell,\varepsilon}\D^\varepsilon.
\end{equation}
For some references providing motivation and background for this conjecture, we refer to \cite[Conjecture 1.1]{PTW2} and the discussion thereafter. The conjecture for $d=\ell=2$ follows from Gau\ss' genus theory. Since $\#\Cl_K[\ell^t]\leq \#\Cl_K[\ell]^t$ (consider the homomorphism $[\aaa]\rightarrow [\aaa]^\ell$ from $\Cl_K[\ell^t]$ to $\Cl_K[\ell^{t-1}]$) the conjecture also holds true for
$(d,\ell)=(2,2^t)$ and arbitrary $t\in \IN$ (see \cite[Section 7.1]{PTW2}).
Apart from that the only cases of primes $\ell$ for which improvements 
over the trivial bound have been established  are $\ell=3$ for $d\leq 4$ by pioneering work of Pierce, Helfgott, Ellenberg and Venkatesh \cite{Pierce05, Pierce06,HelfgottVenkatesh,EllVentorclass},
and more recently  the case $\ell=2$ for arbitrary $d$ by Bhargava et al. \cite{Bhargava2tor}. As noted, again in \cite[Section 7.1]{PTW2}, the improvements for $(d,\ell)=(2,3)$ hold more generally for 
$(d,\ell)=(2,3\cdot 2^t)$ using the fact that $\#\Cl_K[\ell]$  is a multiplicative function (as function of $\ell$) and then combining the bounds for $\#\Cl_K[3]$ and $\#\Cl_K[2^t]$.
Of course, this argument also applies to Theorem \ref{thm2} and shows that we could replace $\ell$ in the exponent on the right hand-side by its maximal odd divisor.

These are all cases  $(d,\ell)$ for which unconditional non-trivial upper bounds for $\#\Cl_K[\ell]$ are known.
Assuming the Riemann hypothesis for the Dedekind zeta function of the normal closure of $K$, Ellenberg and Venkatesh \cite{EllVentorclass} proved the bound
\begin{equation}\label{eq:GRH-bound}
  \#\Cl_K[\ell]\ll_{d,\ell,\varepsilon}\D^{\frac{1}{2}-\frac{1}{2\ell(d-1)}+\varepsilon}
\end{equation}
for all number fields $K$. Taking up a key idea of Michel and Soundararajan and generalising it from imaginary quadratic to arbitrary number fields they show in \cite[Lemma 2.3]{EllVentorclass} that the presence of many small primes splitting completely in $K$ leads to savings over \eqref{eq:trivial}. Together with the conditional effective version of Chebotarev's density theorem, this leads directly to the bound \eqref{eq:GRH-bound}. Small splitting primes were also used in \cite{AmoDvo03} to lower bound the exponent of the class group of CM-fields.

Subsequently, several papers took the same approach using \cite[Lemma 2.3]{EllVentorclass}, but tried to establish the existence of enough splitting primes unconditionally, at the cost of averaging or having to exclude a zero-density subset of fields in a given family. Number field counting techniques were used in combination with probabilistic methods in \cite{EllenbergPierceWood,elltor1}, the large sieve in \cite{Heath-BrownPierce}, and new effective versions of Chebotarev's density theorem in \cite{PTW, An}.

In this paper, we take a different direction by refining the core argument \cite[Lemma 2.3]{EllVentorclass} itself, see Proposition \ref{keylemma}. We render the argument in a form from which we then profit by playing two ways of counting number fields, by discriminant and by minimal height of certain generators, against each other. Possible refinements were already proposed in \cite{MR2459988}, and a first concrete step in this direction was taken by the second author in \cite{doi:10.1112/blms.12113}, leading to improvements upon \cite{EllenbergPierceWood} in some cases. Our new technique yields improvements on average in all cases of \cite{EllenbergPierceWood} and \cite{doi:10.1112/blms.12113}
(provided $\ell$ is not too small), as well as on some results in \cite{EllVentorclass,PTW,An}. For example, when $\ell>2$, the case $k=1$ in Theorem \ref{thm2}  improves the case $d=2$ of \cite[Corollary 1.1.1]{EllenbergPierceWood}, which gives an upper bound
\begin{equation}\label{eq:EPW_cor}
  \sum_{K}\#\Cl_K[\ell] \ll_{\ell,\varepsilon}
X^{\frac{3}{2}-\frac{1}{2\ell(d-1)}+\varepsilon},
\end{equation}
provided $d\in\{2,3,4,5\}$ and $\ell\geq \ell(d)$, where $\ell(2)=\ell(3)=1$, $\ell(4)=8$ and $\ell(5)=25$.

Note that control over averages is often enough for applications, as
illustrated by Corollary \ref{cor1}. Moreover, having
sufficiently good upper bounds for $k$-th moments with arbitrarily large $k$
would imply  (\ref{conj:epsilon}), as shown in \cite[Theorem 1.2]{PTW2}. Here, sufficiently good means with an exponent on $X$  independent of $k$, and valid for arbitrarily large $k$.

To our best knowledge, the only published results concerning higher moments are
those of Heath-Brown and Pierce \cite{Heath-BrownPierce} on imaginary quadratic
fields. One can easily deduce bounds for arbitrary moments from a field count and
pointwise results with small exceptional sets, such as those in
\cite{EllenbergPierceWood, PTW}: for a family $S$ of degree-$d$-fields we write
\begin{equation*}
S(X) = \{K\in S;\ \D\leq X\}.
\end{equation*}
If all but at most $O_{S,a,b,\ell}(X^{a})$ exceptional fields $K\in S(X)$ satisfy
$\#\Cl_K[\ell]\ll_{S,a,b,\ell}\D^{1/2-b}$, then
\begin{equation}\label{eq:straightforward}
  \sum_{K\in S(X)}\#\Cl_K[\ell]^k \ll_{S,a,b,\ell,\varepsilon,k} \# S(X)X^{k(1/2-b)}+X^{k/2+a+\varepsilon}.
\end{equation}
In the following, we call this the \emph{straightforward approach}. In Theorem \ref{thm2} and later
results, we give bounds for the $k$-th moment in cases where the
exceptional set is
known to be
very small. Our bounds are stronger than
\eqref{eq:straightforward} when $\ell$ is not too small in terms of the
other parameters, in particular in terms of $k$.

Last but not least we should mention that there are very few but spectacular results for the averages of $\ell$-torsion in degree-$d$-fields
that provide not only upper bounds but even asymptotics.
The case $(d,\ell)=(2,3)$ is due to Davenport-Heilbronn \cite{81} (see also the recent improvements \cite{BST13,TT13, Hou}), and $(3,2)$ due to Bhargava \cite{Bhargava05}.
In particular, these two results show that for $(d,\ell)\in \{(2,3),(3,2)\}$ the conjecture (\ref{conj:epsilon}) holds true on average.
Regarding $4$-torsion in quadratic fields  Fouvry and Kl\"uners \cite{MR2276261} have established the average value for $\#\Cl_K[4]/\#\Cl_K[2]$. Related results were obtained by Klys \cite{Kly16} for  $3$-torsion in cyclic cubic fields, and by Milovic \cite{Mil15}
for the $16$-rank in certain quadratic fields.

\subsection{Further main results}
Let us next consider the other cases of \cite{EllenbergPierceWood}, concerning degree-$d$-fields for $d\in\{3,4,5\}$ (whose normal closure does not have Galois group $D_4$ in case $d=4$). In this case, our result is as follows. Define $\delta_0(3)=2/25$, $\delta_0(4)=1/48$, and $\delta_0(5)=1/200$.

\begin{theorem}\label{thm3}
Suppose $d\in \{3,4,5\}$, and $\varepsilon>0$.
As $K$ ranges over number fields of degree $d$ with $\D\leq X$ (and non-$D_4$ in the case $d = 4$), we have
$$\sum_K\#\Cl_K[\ell] \ll_{\ell,\varepsilon}
X^{\frac{3}{2}-\min\left\{\delta_0(d), \frac{1}{(d-1)\ell+3}\right\}+\varepsilon}.$$
\end{theorem}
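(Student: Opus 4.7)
The plan follows the same blueprint as for Theorem \ref{thm2}: combine the refined Proposition \ref{keylemma} (where $\D$ is replaced by a smaller specialised invariant $\del$) with a sieve showing that most degree-$d$ fields with $\D \leq X$ have many small primes splitting completely. Set $Y = X^{\alpha}$ for a parameter $\alpha > 0$ to be chosen, and write $\mathcal{F}$ for the family of degree-$d$ fields (non-$D_4$ if $d=4$) with $\D \leq X$. For each $K \in \mathcal{F}$, let $\pi^*_K(Y)$ denote the number of rational primes $p \leq Y$ splitting completely in $K$, and partition $\mathcal{F} = \mathcal{F}_{\mathrm{good}} \sqcup \mathcal{F}_{\mathrm{bad}}$ according to whether $\pi^*_K(Y) \geq M$ or not, for a second parameter $M > 0$.

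For $K \in \mathcal{F}_{\mathrm{good}}$, Proposition \ref{keylemma} yields
$$\#\Cl_K[\ell] \ll_{d,\ell,\varepsilon} \D^{\frac{1}{2}+\varepsilon} / M,$$
provided $Y$ stays below a threshold of the form $\del^{\lambda}$ with $\lambda = \lambda(d,\ell)$ coming from the refinement. Summing and using the unconditional upper bound $\#\mathcal{F} \ll X$---from Davenport--Heilbronn for $d=3$ and from Bhargava for $d\in\{4,5\}$ (with the non-$D_4$ restriction when $d=4$)---one obtains
$$\sum_{K \in \mathcal{F}_{\mathrm{good}}} \#\Cl_K[\ell] \ll_{d,\ell,\varepsilon} X^{\frac{3}{2}+\varepsilon}/M.$$

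For the bad fields I would apply the trivial bound \eqref{eq:trivial} per field, contributing at most $X^{1/2+\varepsilon} \cdot \#\mathcal{F}_{\mathrm{bad}}$. Controlling $\#\mathcal{F}_{\mathrm{bad}}$ proceeds via the Erd\H{o}s-style probabilistic argument of \cite{EllenbergPierceWood}: $K \in \mathcal{F}_{\mathrm{bad}}$ only if there is a large subset of primes $p \leq Y$ at which $K$ is \emph{not} totally split. Inputting the unconditional power-saving counts for degree-$d$ fields with prescribed local splitting behaviour at a fixed finite set of primes---with savings of exponent $2/25$, $1/48$, and $1/200$ for $d = 3$, $d = 4$ non-$D_4$, and $d=5$ respectively, coming from work of Bhargava, Shankar and Tsimerman---gives a bound of the shape $\#\mathcal{F}_{\mathrm{bad}} \ll_{d,\varepsilon} X^{1-\delta_0(d)+\varepsilon}$ once $M$ and $Y$ are chosen so that the sieve absorbs the local factors.

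Balancing $X^{3/2+\varepsilon}/M$ against $X^{3/2-\delta_0(d)+\varepsilon}$ and optimising $\alpha$ subject to the key-lemma constraint $Y \leq \del^{\lambda}$ produces the claimed exponent $\tfrac{3}{2} - \min\{\delta_0(d),\ 1/((d-1)\ell+3)\} + \varepsilon$: for small $\ell$ the binding constraint is the threshold from the refined key lemma, yielding the $1/((d-1)\ell+3)$ saving, whereas for large $\ell$ the saving saturates at the number-field-counting exponent $\delta_0(d)$. The main obstacle I foresee is to establish that the new invariant $\del$ allows $Y$ to be pushed up to $X^{1/((d-1)\ell+3)-\varepsilon}$ \emph{on average} over $\mathcal{F}$; this requires a moment estimate for $\del$ controlled by the specialised counting results for $\del$ promised in the introduction, and it must be paired with the sieve in a way that preserves the $\delta_0(d)$ saving uniformly over $p \leq Y$.
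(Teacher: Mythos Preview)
Your approach is essentially the paper's, which simply invokes Proposition~\ref{mainprop1} with $\rho=1$ (Davenport--Heilbronn, Bhargava), $\theta=d-1+2/\ell$ (Lemma~\ref{countingprop}), and the Ellenberg--Pierce--Wood bad-field bound (Theorem~\ref{exceptionalfields1}). However, the piece you flag as an ``obstacle'' at the end is a genuine gap in the argument as written, and its resolution is structural rather than analytic: you need a \emph{three}-way split, not a two-way good/bad split, and what is required is not a moment estimate for $\del$ but a plain upper bound for the number of fields with small $\del$.

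The issue is that the hypothesis of Proposition~\ref{keylemma} asks for split primes of norm at most $\del^{\delta}$ with $\delta<1/\ell$, and $\del$ varies with $K$ while your cutoff $Y=X^{\alpha}$ is fixed. Hence the key lemma is simply inapplicable to good fields with $\del\le \D^{\alpha\ell}$; your bound $\sum_{\mathrm{good}}\ll X^{3/2+\varepsilon}/M$ is not justified for these. They form a third set $M_0$, handled via the trivial bound~\eqref{eq:trivial} together with the elementary count $\Ndel(S,T)\ll_d T^{d-1+2/\ell}$ of Lemma~\ref{countingprop}, which holds because any witness for $\del$ generates $K$ with a minimal polynomial whose leading and constant coefficients are, up to sign, $\ell$-th prime powers (Lemma~\ref{lem:alpha_prop}). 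With the three-way decomposition $M_0=\{\del\le \D^{\alpha\ell}\}$, $M_1'=\{\del>\D^{\alpha\ell}\}\smallsetminus\B_S$, $M_1''=\{\del>\D^{\alpha\ell}\}\cap\B_S$ (where $\B_S=\B_S(X;X^{\alpha},cX^{\alpha}/\log X)$) the respective exponents are $\tfrac12+\alpha\ell\theta$, $\tfrac32-\alpha$, $\tfrac32-\alpha$; balancing gives $\alpha=1/(\ell\theta+1)=1/((d-1)\ell+3)$, capped at $\delta_0(d)$ since Theorem~\ref{exceptionalfields1} is only available for $\alpha\le\delta_0(d)$ (and it delivers $\#\B_S\ll X^{1-\alpha+\varepsilon}$, not $X^{1-\delta_0(d)+\varepsilon}$ as you wrote).
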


This improves upon Ellenberg, Pierce, and Wood's result mentioned in (\ref{eq:EPW_cor})
(for large enough $\ell$), and moreover upon \cite[Corollary 1.5]{doi:10.1112/blms.12113}. 
Assuming GRH, our method also works for general families $S$ of number fields
of fixed degree, but it loses its power if the families are too thin, that is,
if $\#S(X)=\#\{K\in S\: ;\: \D\leq X\}\ll X^{\rho}$ for
$\rho<1$ too small compared to the other parameters.

\begin{theorem}\label{thm:GRH}
Let $\varepsilon>0$, let $S$ be any family of number fields of degree $d$, and assume that
  \begin{enumerate}
  \item[(i)] the Dedekind zeta function of the normal closure of each field in $S$ satisfies the Riemann hypothesis,   
  \item[(ii)] the numbers $\rho,\ce>0$ are such that $\# S(X) \leq \ce X^\rho$ for all $X\geq 2$.
  \end{enumerate}
  Then
\begin{equation*}
    \sum_{K\in S(X)} \#\Cl_K[\ell]^k \ll_{d,\rho,\ce,\ell,k,\varepsilon}X^{\frac{k}{2}+\rho-\min\left\{\rho,\frac{\rho k}{(d-1)\ell+2}\right\}+\varepsilon}.
  \end{equation*}
\end{theorem}

For comparison, an application of the straightforward approach \eqref{eq:straightforward} with the GRH-bound
\eqref{eq:GRH-bound} from \cite{EllVentorclass} and \emph{no} exceptional fields yields
\begin{equation}\label{eq:GRH_bound_comparison}
  \sum_{K\in S(X)} \#\Cl_K[\ell]^k\ll_{d,\rho,\ce,\ell,k,\varepsilon}\# S(X) X^{\frac{k}{2}-\frac{k}{2\ell(d-1)}+\varepsilon}.
\end{equation}
Taking $\rho$ to be the smallest known value with $\# S(X)\ll_\rho X^\rho$
minimises the bound in Theorem \ref{thm:GRH} as well as the one from \eqref{eq:GRH_bound_comparison}. As long as
$\rho>\frac{1}{2}+\frac{1}{\ell(d-1)}$, our Theorem
\ref{thm:GRH} provides a stronger bound than \eqref{eq:GRH_bound_comparison}, thus giving an impression of the density of $S$ that is required for our method to yield improvements.

\subsection{Further results}
In some cases with prescribed Galois groups, our method can also work for families that are thinner than suggested above. For cyclic
extensions not covered by Theorem \ref{thm2}, we are able to improve upon
\cite{elltor1,PTW} in the case $d=3$ and, moreover, to cover higher moments
using a refinement of the  straightforward approach \eqref{eq:straightforward} based on Proposition \ref{mainprop2}.

\begin{theorem}\label{thm4}
Let $\varepsilon>0$ and $k\geq 0$ be real numbers, and $\ell\in\IN$. As $K$ ranges over cubic $A_3$-extensions of $\IQ$ with $\D\leq X$, we have
$$\sum_K\#\Cl_K[\ell]^k \ll_{\ell,k,\varepsilon}X^{\frac{k+1}{2}-\min\left\{\frac{1}{2},\frac{k}{3\ell+4}\right\}+\varepsilon}.$$
\end{theorem}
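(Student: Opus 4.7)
The plan is to apply Proposition \ref{keylemma} uniformly to each cyclic cubic $K$ with $\D \le X$ and then sum the resulting pointwise bound using new counting estimates for cyclic cubic fields constrained by the auxiliary invariant $\del$. This parallels the proof of Theorem \ref{thm2}, with three essential adjustments: (i) the count of cyclic cubic fields with $\D \le X$ is $\asymp X^{1/2}$ rather than $\asymp X$, which accounts for the $(k+1)/2$ baseline rather than the $k/2 + 1$ seen in the quadratic case; (ii) primes splitting completely in a cyclic cubic field $K$ of conductor $f_K$ are exactly those $p\equiv 1\pmod{f_K}$, governed by the order-$3$ Dirichlet characters modulo $f_K$, which are all complex, so the relevant $L$-functions are free of Siegel-type exceptional zeros and one obtains uniform unconditional lower bounds on the prime-counting function $\pi_K(Y)$ for $Y$ a suitable power of $\D$; (iii) a counting estimate for cyclic cubic fields with $\D \le X$ and $\del \le T$ replaces the quadratic analogue used in Theorem \ref{thm2}.

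Concretely, Proposition \ref{keylemma} applied with a parameter $Y$ yields a pointwise bound of the form $\#\Cl_K[\ell] \ll A(K,Y) + B(Y)$, in which the main term $A(K,Y)$ depends on $\del$ rather than $\D$ and decreases with $\pi_K(Y)$. Raising to the $k$-th power, partitioning the $K$-sum dyadically by the sizes of $\del$ and $\D$, invoking the new counting estimate on each piece, and optimising $Y$ should produce the advertised exponent $(k+1)/2 - k/(3\ell+4) + \varepsilon$ so long as $k \le (3\ell+4)/2$. For larger $k$ I would interpolate: split $\#\Cl_K[\ell]^k = \#\Cl_K[\ell]^{k_0} \cdot \#\Cl_K[\ell]^{k-k_0}$ with $k_0 = (3\ell+4)/2$, apply the proven moment-$k_0$ bound to the first factor, and control the second by the trivial Landau bound $\#\Cl_K[\ell] \ll_\varepsilon \D^{1/2+\varepsilon}$. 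This saturates the saving at $1/2$ and produces the $\min$ in the statement.

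The main obstacle I anticipate is the counting estimate for cyclic cubic fields with $\del \le T$. Cyclic cubic fields form a very thin family, essentially parameterised by conductors $f_K$ whose prime factors are $\equiv 1 \pmod 3$ (together with a possible factor of $9$), and one has $\D = f_K^2$, so any inefficiency in the joint control of $(\D, \del)$ will propagate directly into the exponent of the saving. Producing an estimate sharp enough to yield the clean denominator $3\ell+4$ will require delicate multiplicative arguments exploiting both the definition of $\del$ and the specific structure of cyclic cubic conductors; this is where I expect the bulk of the technical work to be concentrated.
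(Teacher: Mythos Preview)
Your overall strategy is essentially what the paper does, though the paper packages the dyadic partition in $\del$, the good/bad split on splitting primes, and the optimisation into a general framework (Proposition~\ref{mainprop2}). The proof of Theorem~\ref{thm4} then reduces to verifying three inputs: $\rho=1/2$ for the discriminant count, $\tau=0$ for the bad-field count (this is the content of your point~(ii), supplied in the paper by Theorem~\ref{exceptionalfields2}), and an admissible $\theta$ with $N_{\eta_\ell}(S,X)\ll X^\theta$ for the $\del$-count. The $\min$ in the exponent falls out automatically from the framework---the error term $X^{k/2+\tau+\varepsilon}$ dominates once $k\ge\ell\theta$---so no separate interpolation is needed, though yours would also work.

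Where your plan diverges from the paper is in point~(iii). The required bound is $N_{\eta_\ell}(S,X)\ll_\varepsilon X^{3/2+2/\ell+\varepsilon}$, which is what produces the denominator $3\ell+4$ (via $\ell\theta=(3\ell+4)/2$ and $\rho=1/2$). The paper obtains it not through multiplicative arguments on cyclic cubic conductors, but by \emph{polynomial counting}: by Lemma~\ref{lem:alpha_prop}, each field with $\del<X$ arises from an integer polynomial of degree~$3$ and height $\ll X$ whose leading and constant coefficients are $\ell$-th prime powers and whose Galois group is $A_3$. The Galois-group constraint is then exploited via Dietmann's Galois-resolvent method (Proposition~\ref{prop:sub_G}), saving a factor $X^{1-\#(S_3/A_3)^{-1}}=X^{1/2}$ over the crude count of Lemma~\ref{countingprop}; this is Corollary~\ref{cor:An_ext}. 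Your anticipated route through the arithmetic of cyclic cubic conductors has no obvious interface with the height invariant $\del$, and it is not clear it would yield the correct $\theta$; the Galois-resolvent argument is the key technical idea you are missing.

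A minor slip: the primes splitting completely in $K$ form the kernel of the associated cubic character modulo $f_K$, an index-$3$ subgroup of $(\IZ/f_K\IZ)^\times$, not just the class $p\equiv 1\pmod{f_K}$; your no-Siegel-zero heuristic is nonetheless sound and is indeed the reason $\tau=0$ is attainable here.
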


For comparison,
the straightforward approach \eqref{eq:straightforward} applied with the pointwise estimate from \cite[Theorem
7.2]{PTW} for almost all $A_3$-fields gives
$\sum_K\#\Cl_K[\ell]^k
\ll_{\ell,k,\varepsilon}X^{\frac{k+1}{2}-\min\{\frac{1}{2},\frac{k}{4\ell}\}+\varepsilon}$
upon which 
Theorem \ref{thm4} is an improvement as long as $\ell\geq 5$ and $k<2\ell$.

We can also get improvements in the case of quintic fields whose normal closure
has Galois group $D_5$, the dihedral group of order $10$. As already mentioned
in the discussion after Theorem \ref{thm2}, no asymptotics for the counting function of these fields are known. Moreover, we need to impose the same ramification restrictions as in \cite{PTW}, since we rely on results from that paper to count small splitting primes.
If the rational prime $p$ ramifies tamely in a number field $K$ whose normal closure $\tilde{K}$ has Galois group $G$ then the inertia group $I(\mathfrak{B})\subset G$
is cyclic for any prime ideal $\mathfrak{B}\subset \Oseen_{\tilde{K}}$ lying above $p$. For different prime ideals $\mathfrak{B}$ over the same rational prime $p$ these inertia groups are conjugate.  Let $n>2$ be odd and $G=D_n$, the dihedral group of symmetries of a regular $n$-gon of order $2n$, so that the conjugacy class of a reflection is the set of all reflections. Keeping this in mind we say that the ramification type of a tamely ramified prime  $p$ is generated by a reflection  if each $I(\mathfrak{B})$ is generated by a reflection.

\begin{theorem}\label{thm:D5}
Let $\varepsilon>0$ and $k\geq 0$ be real numbers, and $\ell\in\IN$. Let $S$ be the family of all quintic $D_5$-extensions of $\IQ$
for which the ramification type of $p$ is generated by a reflection in $D_5$ for every tamely ramified rational prime  $p$. Suppose moreover that $\rho,\ce>0$ are such that 
\begin{equation}\label{eq:D5_rho_cond}
    \# S(X)=\#\{K\in S;\ \D\leq X\}\leq \ce X^{\rho}
  \end{equation}
  holds for all $X\geq 2$. Then, as $K$ ranges over
  $S(X)$, we have
\begin{equation*}
\sum_K\#\Cl_K[\ell]^k \ll_{\rho,\ce,\ell,k,\varepsilon}
X^{\frac{k}{2}+\rho-\frac{12\rho k}{37\ell+24}+\varepsilon}+X^{\frac{k}{2}+\frac{1}{4}+\varepsilon}.
\end{equation*}
\end{theorem}

Note that, by \cite[Proposition 2.3]{PTW}, any $\rho$ with
\eqref{eq:D5_rho_cond} must satisfy $\rho\geq 1/2$, and Malle's conjecture
predicts that $\rho=1/2$ is indeed the optimal exponent. For comparison, with the
conjectured behaviour $\# S(X)\asymp X^{1/2}$,
the straightforward approach \eqref{eq:straightforward} applied to
\cite[Theorem 7.2]{PTW} would yield
\begin{equation}\label{eq:D5_PTW_bound}
  \sum_K\#\Cl_K[\ell]^k \ll_{\ce,\ell,k,\varepsilon} X^{\frac{k+1}{2}-\frac{k}{8\ell}+\varepsilon} + X^{\frac{k}{2}+\frac{1}{4}+\varepsilon}.
\end{equation}
Hence, with Malle's conjectured exponent $\rho=1/2$, our result provides
improvements if
$\ell > 2$ and $k<2\ell$. Taken together, Corollary \ref{cor1} and Theorem \ref{thm:D5} immediately
imply the following unconditional result with $\rho=19/28+\varepsilon$.

\begin{corollary}\label{cor:D5}
Let $\varepsilon>0$ and $k\geq 0$ be real numbers, and $\ell\in\IN$. Let $S$ be the family of all quintic $D_5$-extensions of $\IQ$
for which the ramification type of $p$ is generated by a reflection in $D_5$
for every tamely ramified rational prime  $p$. Then, as $K$ ranges over $S(X)$, we have
\begin{equation*}
\sum_K\#\Cl_K[\ell]^k \ll_{\ell,k,\varepsilon}
X^{\frac{k}{2}+\frac{19}{28}-\frac{57 k}{259\ell+168}+\varepsilon}+X^{\frac{k}{2}+\frac{1}{4}+\varepsilon}.
\end{equation*}
\end{corollary}

Compared to what one gets from the straightforward approach
\eqref{eq:straightforward} using \cite[Theorem 7.2]{PTW} and
estimating $\# S(X)$ again by Corollary \ref{cor1}, this yields an improvement
whenever
$k<24\ell/7$ and $\ell\geq 2$.

Moreover, we can get improvements for certain families of quartic $D_4$-fields studied in very recent work of An \cite{An}. For distinct and squarefree $a,b\in\IZ\smallsetminus\{0,1\}$, we denote by $\Sab$ the family of quartic number fields whose normal closure has Galois group $D_4$ and contains the biquadratic field $\IQ(\sqrt{a},\sqrt{b})$. It is shown in \cite{An} that the normal closure of every $D_4$-field contains a unique biquadratic field, and the pairs $(a,b)$ with $\Sab\neq\emptyset$ are classified in \cite[Condition 1.3]{An}.

\begin{theorem}\label{thm:D4}
  Let $\varepsilon>0$ and $k\geq 0$ be real numbers, and $\ell\in\IN$. Let $a,b\in\IZ\smallsetminus\{0,1\}$ be distinct and squarefree such that $\Sab\neq\emptyset$. Suppose moreover that $\rho,\ce>0$ are such that 
\begin{equation}\label{eq:D4_rho_cond}
    \#\{K\in \Sab;\ \D\leq X\}\leq \ce X^{\rho}
\end{equation}
holds for all $X\geq 2$. Then, as $K$ ranges over the fields in $\Sab$ with $\D\leq X$, we have
\begin{equation*}
\sum_K\#\Cl_K[\ell]^k \ll_{a,b,\rho,\ce,\ell,k,\varepsilon}X^{\frac{k}{2}+\rho-\min\left\{\rho,\frac{3\rho k}{7\ell+6}\right\}+\varepsilon}.
\end{equation*}  
\end{theorem}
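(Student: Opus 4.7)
The plan is to combine Proposition \ref{keylemma}, which bounds $\#\Cl_K[\ell]$ in terms of the new invariant $\eta_\ell(K)$ and a count of small completely split primes, with an averaged count of such primes across the family $\Sab$. The per-field input has the rough shape
\[
\#\Cl_K[\ell]\ll_{\varepsilon,\ell}\eta_\ell(K)^{1/2+\varepsilon}\,Y^{A(\ell)}\,M_K(Y)^{-B(\ell)},
\]
where $M_K(Y)$ is the number of unramified rational primes $p\leq Y$ that split completely in $K$, and $A(\ell),B(\ell)$ are the exponents dictated by \ref{keylemma}.

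First I would pin down the splitting condition for $K\in\Sab$. The normal closure $\tilde K$ is a $D_4$-extension of $\IQ$ whose centre fixes the biquadratic subfield $L=\IQ(\sqrt a,\sqrt b)$, and $K=\tilde K^H$ for some non-normal subgroup $H\leq D_4$ of order $2$. Since the core of $H$ in $D_4$ is trivial, an unramified prime $p$ splits completely in $K$ if and only if $\mathrm{Frob}_p=1$ in $\Gal(\tilde K/\IQ)$; such $p$ must split completely in $L$. Letting $F\subset L$ be the quadratic subfield contained in $K$, the extension $\tilde K/F$ is cyclic of order $4$, and splitting of $p$ in $K$ is equivalent to splitting of any prime of $F$ above $p$ in $\tilde K/F$. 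This description makes the set $\{K\in\Sab : p\text{ splits in }K\}$, for fixed $p$, amenable to counting via the parametrisation of $\Sab$ by cyclic quartic extensions of $F$ from \cite{An}.

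Second, swapping the order of summation between $K\in\Sab$ and $p\leq Y$ and using this parametrisation, I expect a lower bound of the form
\[
\sum_{\substack{K\in\Sab\\ \D\leq X}}M_K(Y)\gg_{a,b}\delta\,\pi(Y)\cdot\#\{K\in\Sab:\D\leq X\},
\]
for some positive constant $\delta$ accounting for the proportion of $K\in\Sab$ in which a fixed prime of $L$ lifts to a split prime of $\tilde K$. Inserting this into the per-field bound, using $\eta_\ell(K)\leq\D\leq X$ together with \eqref{eq:D4_rho_cond}, and applying H\"older's inequality to convert an $M_K$-average into the required moment of $M_K^{-B(\ell)k}$, one arrives at
\[
\sum_K\#\Cl_K[\ell]^k\ll X^{k/2+\rho+\varepsilon}\cdot\bigl(Y^{A(\ell)}\,X^{-\rho}\,\pi(Y)^{-B(\ell)}\bigr)^{k}.
\]
Optimising $Y=X^\alpha$ then balances the two competing powers of $X$, producing the saving $\min\{\rho,\tfrac{3\rho k}{7\ell+6}\}$: the numerator $3\rho k$ tracks the density of split primes weighted by the family size, while the denominator $7\ell+6$ arises from the balance between $A(\ell)$ and $B(\ell)$ as prescribed by \ref{keylemma}. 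In the regime $3k\geq 7\ell+6$, the trivial per-field bound already saturates the sum and the saving is the full $\rho$; in the complementary range, the $k$-linear saving takes over.

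The main obstacle will be the averaged lower bound on $M_K(Y)$ uniformly for $p\leq Y$ as $Y$ grows with $X$, within a potentially thin family where only $\rho\geq 1/2$ is available. A single-field Chebotarev statement is far too weak, so the counting must be carried out by fibering $\Sab$ over quadratic extensions of $F$ as in \cite{An} and then summing over split primes of $F$, with error terms that do not inflate once one fixes $Y$ as a small power of $X$. Once that average is in hand, the subsequent H\"older-optimisation is bookkeeping, but it must be done carefully so that the $\rho$-dependence of the saving is genuinely linear, as stated.
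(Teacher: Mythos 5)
Your proposal misses the two ideas that actually power this theorem, and the step you do propose (averaged lower bound on $M_K(Y)$ plus H\"older) fails for a structural reason.

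First, the H\"older step is in the wrong direction. You need an upper bound on $\sum_K M_K(Y)^{-\beta}$ with $\beta>0$ (after excluding fields with $M_K=0$, for which Proposition~\ref{keylemma} says nothing). From an average lower bound $\frac{1}{\#S}\sum_K M_K(Y)\gg\pi(Y)$, Jensen/H\"older gives only $\frac{1}{\#S}\sum_K M_K(Y)^{-\beta}\geq\bigl(\frac{1}{\#S}\sum_K M_K(Y)\bigr)^{-\beta}$, a \emph{lower} bound — useless here. If a positive proportion of $K\in\Sab$ had $M_K(Y)=0$ while the rest had $M_K(Y)$ twice as large, your average would survive but the moment sum would diverge. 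What is needed, and what the paper uses, is the much stronger statement (Theorem~\ref{exceptionalfields3}, from An's effective Chebotarev for $\Sab$) that the number of ``bad'' fields $K$ with fewer than $cX^\delta/\log X$ completely split primes $\leq X^\delta$ is $\ll_{a,b,\delta,\varepsilon}X^\varepsilon$; bad fields are then handled with the trivial bound~\eqref{eq:trivial}.

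Second, and more fundamentally, you never actually exploit the invariant $\del$: by writing ``using $\eta_\ell(K)\leq\D\leq X$'' you discard the entire gain of the paper. The exponent $\frac{3\rho k}{7\ell+6}$ has nothing to do with balancing $A(\ell)$ and $B(\ell)$ in the key lemma (there $\#\Cl_K[\ell]\ll_{d,\delta,\varepsilon}\D^{1/2+\varepsilon}M^{-1}$, so $A=0$, $B=1$); it comes from the counting estimate $\Ndel(\Sab,X)\ll_{\theta}X^{\theta}$ with $\theta=2+\tfrac13+\tfrac{2}{\ell}+\varepsilon'$ of Corollary~\ref{cor:D4_ext}, whose $\tfrac13$ is $\#(S_4/D_4)^{-1}$ in the Dietmann/Bombieri--Pila bound of Proposition~\ref{prop:sub_G}. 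One checks $\ell\theta=\tfrac{7\ell+6}{3}+\varepsilon'\ell$, and then the saving $\frac{\rho k}{\ell\theta}$ produced by Proposition~\ref{mainprop2} gives exactly $\frac{3\rho k}{7\ell+6}$. Proposition~\ref{mainprop2} achieves this by stratifying $\SX$ into dyadic-type ranges $\D^{\gamma_{i-1}}\leq\del<\D^{\gamma_i}$ and on each piece playing off the scarcity of fields with small $\del$ (via $\Ndel$) against the strength of Proposition~\ref{keylemma} applied with primes up to $\D^{\gamma_{i-1}(1/\ell-\varepsilon)}$; the geometric progression $\gamma_i=\gamma_0\sum_{r\leq i}(k/\ell\theta)^r$ is chosen so the exponents on all pieces match, and passing $N\to\infty$ yields the stated saving. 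None of this stratification appears in your sketch, and without it $\del$ plays no role and one lands back at a bound no better than summing the Ellenberg--Venkatesh input.
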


By \cite[Theorem 1.2]{An}, any $\rho$ with \eqref{eq:D4_rho_cond} must satisfy
$\rho\geq 1/2$, and one might expect $\rho=1/2$ to be the correct order of
magnitude. Under the assumption that the expectated order of magnitude $\#\{K\in \Sab;\ \D\leq X\}\asymp 
X^{1/2}$ is indeed correct, the straightforward approach \eqref{eq:straightforward} using
\cite[Theorem 1.1]{An} would yield
\begin{equation*}
\sum_K\#\Cl_K[\ell]^k \ll_{a,b,\ell,k,\varepsilon}X^{\frac{k+1}{2}-\min\left\{\frac{1}{2},\frac{k}{6\ell}\right\}+\varepsilon},
\end{equation*}
upon which Theorem \ref{thm:D4} improves whenever $\ell>3$ and $k<3\ell$. 
As one can take the exponent $\rho=1$ in Theorem \ref{thm:D4} by
\cite[Corollary 1.4]{59}, we
immediately obtain the following unconditional result.

\begin{corollary}\label{cor:D4}
  Let $\varepsilon>0$ and $k\geq 0$ be real numbers, and $\ell\in\IN$. Let $a,b\in\IZ\smallsetminus\{0,1\}$ be distinct and squarefree such that $\Sab\neq\emptyset$. Then, as $K$ ranges over the fields in $\Sab$ with $\D\leq X$, we have
\begin{equation*}
\sum_K\#\Cl_K[\ell]^k \ll_{a,b,\ell,k,\varepsilon}X^{\frac{k}{2}+1-\min\left\{1,\frac{3 k}{7\ell+6}\right\}+\varepsilon}.
\end{equation*}  
\end{corollary}

This should be compared to what one gets from \cite[Theorem 1.1]{An} via
\eqref{eq:straightforward}, using \cite[Corollary 1.4]{59} to
estimate $\#\{K\in \Sab;\ \D\leq X\}\ll X$, which yields
\begin{equation*}
\sum_K\#\Cl_K[\ell]^k \ll_{a,b,\ell,k,\varepsilon}X^{\frac{k}{2}+1-\min\left\{1,\frac{k}{6\ell}\right\}+\varepsilon}.
\end{equation*}  

Our techniques can also provide improved average and higher moment bounds for
some results that are conditional on open conjectures. In \cite{PTW}, the assumption of GRH was replaced for certain families of number fields by other assumptions, at the price of introducing certain ramification conditions and allowing a small exceptional set. We can also improve some of these conditional results on average.

\begin{theorem}\label{thm5}
Let $\varepsilon>0$ and $k\geq 0$ be real numbers, and $\ell\in\IN$. Let $d\geq 3$ and $S$ be the family of all number fields of degree $d$ with squarefree discriminant, whose normal closure has full Galois group $S_d$ over $\IQ$. Suppose that
\begin{enumerate}
\item[(i)] the strong Artin conjecture holds for all irreducible Galois representations over $\IQ$ with image $S_d$,
\item[(ii)] the numbers $\tau<1/2+1/d$ and $\cz$ are such that for every integer $D$, there are at most $\cz D^{\tau}$ fields $K\in S$ with $\D= D$,
\item[(iii)] the numbers $\rho,\ce>0$ are such that $\#\{K\in S;\ \D\leq X\}\leq \ce X^{\rho}$ for all $X\geq 2$.
\end{enumerate}

Then, as $K$ ranges over all elements of $S$ with $\D\leq X$, we have
$$\sum_{K} \#\Cl_K[\ell]^k \ll_{d,\rho,\ce,\cz,\ell,k,\tau,\varepsilon}
X^{\frac{k}{2}+\rho-\frac{\rho k}{(d-1)\ell+2}+\varepsilon}+X^{\frac{k}{2}+\tau+\varepsilon}.
$$
\end{theorem}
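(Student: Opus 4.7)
The plan is to run the argument of Theorem \ref{thm:GRH} in parallel, replacing the Riemann hypothesis by the effective Chebotarev density theorem made available through the strong Artin conjecture for $S_d$-extensions of squarefree discriminant, as developed in \cite{PTW}.

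First, apply Proposition \ref{keylemma}---the refined key lemma that underlies the paper's other main results---which, for each $K \in S$ and each parameter $Y \geq 2$, bounds $\#\Cl_K[\ell]$ in terms of the specialised invariant $\eta_\ell(K)$ and the number of rational primes $p \leq Y$ splitting completely in $K$. Next, invoke the effective Chebotarev input of \cite[Theorem 1.19]{PTW}: under hypothesis (i), the Artin $L$-functions appearing in the factorisation of the Dedekind zeta function of the normal closure of $K$ become automorphic, so the zero-density estimates used in \cite{PTW} apply. The squarefree-discriminant condition ensures tame ramification at every ramified prime and delivers uniform control on the analytic conductors of these $L$-functions across $S$. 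This yields, for every $K \in S$ outside an exceptional set $\mathcal{E}$, the expected lower bound $\gg Y/\log Y$ on the number of splitting primes up to $Y \asymp \D^{1/(2(d-1))+\varepsilon}$. Using hypothesis (ii), the number of such exceptional fields with $\D \leq X$ is bounded by $\ll X^{\tau}$, since the zero-density argument of \cite{PTW} isolates a thin set of discriminants and (ii) converts this into a count of fields.

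For $K \in S \setminus \mathcal{E}$ with $\D \leq X$, inserting the splitting-prime count into Proposition \ref{keylemma} and optimising $Y$ yields the same pointwise and averaged savings as in the proof of Theorem \ref{thm:GRH}; summing the $k$-th power of $\#\Cl_K[\ell]$ and applying hypothesis (iii) for the density of $S$ produces the first term $X^{k/2+\rho-\rho k/((d-1)\ell+2)+\varepsilon}$. For $K \in \mathcal{E}$ with $\D \leq X$, we discard the saving and apply the trivial bound \eqref{eq:trivial}; the contribution is $\ll X^\tau \cdot X^{k/2+\varepsilon} = X^{k/2+\tau+\varepsilon}$, giving the second term.

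The principal obstacle is to extract from \cite{PTW} an exceptional set $\mathcal{E}$ whose contribution to $\#\{K \in S : \D \leq X\}$ is $\ll X^\tau$ in a form compatible with hypothesis (ii). The constraint $\tau < 1/2 + 1/d$, which is an Ellenberg--Venkatesh-type bound on the number of degree-$d$ fields sharing a given discriminant, is precisely what guarantees that this exceptional contribution does not drown out the main term. All remaining steps amount to a routine balancing of parameters mirroring the proof of Theorem \ref{thm:GRH}.
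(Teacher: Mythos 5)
Your outline points to the right ingredients (the refined key lemma, the effective Chebotarev machinery of \cite{PTW} under the strong Artin hypothesis, the exceptional set of size $\ll X^\tau$), but the central mechanism producing the improved exponent is missing, and the one piece of numerology you do commit to is wrong.

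You write that the Chebotarev input gives $\gg Y/\log Y$ splitting primes up to $Y\asymp \D^{1/(2(d-1))+\varepsilon}$, and that inserting this into Proposition \ref{keylemma} and ``optimising $Y$'' finishes the job. But if you pick $Y$ as a power of $\D$, Proposition \ref{keylemma} requires the split primes to have norm $\le\del^{\delta}$ with $\delta<1/\ell$, so you would need $\del$ large compared to $\D$, and in the worst case you would only recover the Ellenberg--Venkatesh exponent, yielding $X^{k/2+\rho-k/(2\ell(d-1))+\varepsilon}+X^{k/2+\tau+\varepsilon}$. That is precisely the \emph{comparison} bound stated after Theorem \ref{thm5}, not the theorem itself. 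The theorem's exponent $\rho-\rho k/((d-1)\ell+2)$ comes from Proposition \ref{mainprop2}: one partitions $\SX$ into dyadic ranges $\D^{\gamma_{i-1}}\le\del<\D^{\gamma_i}$, applies Proposition \ref{keylemma} with cutoff $Y$ tied to $\del$ (not $\D$) on each piece, and crucially uses the counting estimate $\Ndel(S,X)\ll X^{d-1+2/\ell}$ from Lemma \ref{countingprop} to bound the cardinality of each piece. The geometric optimisation of the thresholds $\gamma_i$ is what turns $\theta=d-1+2/\ell$ into the exponent $\rho k/(\ell\theta)=\rho k/((d-1)\ell+2)$. Your proposal never invokes this counting result, never decomposes by the size of $\del$, and so has no route to the stated exponent.

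A second, smaller omission: to apply \cite[Corollary 1.17.1]{PTW} in the form of Theorem \ref{exceptionalfields2}, one must place $S$ in the framework $S(G,\J)$ with $\J$ a specified set of inertia types. The paper records that squarefree discriminant forces the inertia group of every tamely ramified prime to be generated by a transposition (cf.\ \cite[Lemma 3.1]{PTW}), which is exactly the case $\J=\{\text{transpositions}\}$ needed in part 5 of Theorem \ref{exceptionalfields2}. Your description (``ensures tame ramification at every ramified prime and delivers uniform control on the analytic conductors'') is not the relevant statement and does not by itself license the application of the \cite{PTW} input.
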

The assumptions (i) and (ii) of Theorem \ref{thm5} are the same as in \cite[Theorem 13]{PTW} for $d\geq 6$. For a precise formulation of the strong Artin conjecture, see \cite[Conjecture F]{PTW}. For $d\in\{3,4,5\}$, our assumptions can be weakened as in \cite{PTW}. If $d= 3,4$, the result is unconditional if one takes $\rho=1$ (using \cite{81} and \cite{Bhargava05}) and $\tau=1/3$ or $\tau=1/2$, respectively 
(see Theorem \ref{exceptionalfields2}). If $d=5$, one still needs (i), but one can take $\rho=1$ and the upper bound for $\tau$ in (ii) can be replaced by $1$ (see Theorem \ref {exceptionalfields2}). 

Note that Bhargava, Shankar and Wang \cite{BhargavaShankarWangSquarefree} have
shown that $\rho\geq 1/2+1/d$, and Bhargava \cite{BhargavaEkedahl} conjectured
that (iii) is sharp with $\rho=1$. On the other hand, it is conjectured that
(ii) holds with $\tau=0$ (see \cite{Ellenberg2005}). Assuming these conjectured
values for $\rho$ and $\tau$ to be the right ones, the straightforward approach \eqref{eq:straightforward} applied to the bounds
from \cite[Theorem 7.2]{PTW} would yield
\begin{equation*}
  \sum_{K} \#\Cl_K[\ell]^k \ll_{d,\ell,k,\varepsilon} X^{\frac{k}{2}+1-\min\{1,\frac{k}{2\ell(d-1)}\}+\varepsilon},
\end{equation*}
upon which Theorem \ref{thm5} yields an improvement when $k<2\ell(d-1)$ and $\ell\geq 2$.

 Finally, we can also improve the conditional result of \cite{PTW} on $A_d$-extensions for all $d\geq 5$. 

\begin{theorem}\label{thm:Ad}
Let $\varepsilon>0$ and $k\geq 0$ be real numbers. Let $d\geq 5$ and $S$ be the family of all number fields of degree $d$, whose normal closure has Galois group $A_d$ over $\IQ$. Suppose that
  \begin{enumerate}
  \item[(i)] the strong Artin conjecture holds for all irreducible Galois representations over $\IQ$ with image $A_d$,
  \item[(ii)] the numbers $\rho,\ce>0$ are such that $\#\{K\in S;\ \D\leq X\}\leq \ce X^{\rho}$ for all $X\geq 2$.
  \end{enumerate}
Then, as $K$ ranges over all fields in $S$ with $\D\leq X$, we have
 \begin{equation*}
\sum_{K} \#\Cl_K[\ell]^k \ll_{d,\rho,\ce,\ell,k,\varepsilon}X^{\frac{k}{2}+\rho-\min\left\{\rho,\frac{\rho k}{(d-3/2)\ell+2}\right\}+\varepsilon}.
\end{equation*}
\end{theorem}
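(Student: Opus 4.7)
The plan is to apply the paper's refined pointwise inequality (Proposition \ref{keylemma}) at each $K \in S$, feed in an effective count of small rational primes with a degree-one prime factor in $\Oseen_K$, and then sum over $K$. Proposition \ref{keylemma} replaces the discriminant in the Ellenberg-Venkatesh/Michel-Soundararajan bound by a new invariant $\del$ of $K$ which is generically smaller than $\D$, and it yields an upper bound for $\#\Cl_K[\ell]$ in terms of $\del$ and the number of rational primes $p \leq Y$ that admit a prime ideal factor of norm $p$ in $\Oseen_K$. Such primes correspond exactly to Frobenius conjugacy classes in $\Gal(\tilde K/\IQ) \cong A_d$ containing an element with at least one fixed point in the natural permutation representation on $d$ letters.

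Under hypothesis (i) the Artin $L$-function of every irreducible representation of $A_d$ is automorphic, and an effective Chebotarev density theorem in the style of \cite[Theorem 6.1]{PTW} applies. Decomposing the fixed-point class function on $A_d$ into irreducible characters and estimating each contribution separately, one obtains $\gg Y/\log Y$ small primes of the desired type provided that $Y$ exceeds a suitable power of the analytic conductors that arise. The key technical input in the $A_d$-setting is that the representation-theoretic structure of the alternating group yields a threshold of the form $Y \geq \del^{1/(d-3/2)+\varepsilon}$, sharper than the $\del^{1/(d-1)+\varepsilon}$ threshold available by the same argument for $S_d$-extensions; this accounts for the improved coefficient $(d-3/2)$ in the denominator of the theorem.

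Feeding this prime count back into Proposition \ref{keylemma} gives a pointwise bound on $\#\Cl_K[\ell]$ involving $\del$ and a power of $\D$. Raising to the $k$-th power and summing over $K \in S$ with $\D \leq X$, using hypothesis (ii) together with the paper's counting results that control the averages of $\del$ across $S$, yields the nontrivial bound $X^{k/2 + \rho - \rho k/((d-3/2)\ell + 2) + \varepsilon}$. The trivial bound \eqref{eq:trivial}, summed via (ii), gives $X^{k/2+\rho+\varepsilon}$ and is used whenever the nontrivial exponent would exceed $\rho$; taking the smaller of the two savings exponents produces the $\min\{\rho,\ \rho k/((d-3/2)\ell+2)\}$ in the statement. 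The principal obstacle is the Chebotarev step, namely proving the $(d-3/2)$-threshold: this requires a careful expansion of the fixed-point class function of $A_d$ into irreducibles and a careful tracking of the analytic conductors of the contributing characters, exploiting the further decomposition of $A_d$-representations that is unavailable for $S_d$, and is precisely what distinguishes Theorem \ref{thm:Ad} from Theorem \ref{thm5}.
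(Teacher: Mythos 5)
Your overall scaffolding (refined key lemma in terms of $\del$, Chebotarev input for small split primes, summing over the family using the discriminant count) is in the right ballpark, but you misattribute the source of the $(d-3/2)$ improvement, and this is not a minor slip: it is the distinguishing feature of this theorem.

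The paper proves Theorem~\ref{thm:Ad} by invoking Proposition~\ref{mainprop2} with $\theta=d-3/2+2/\ell+\varepsilon'$ and $\tau=0$. The role of the strong Artin conjecture (hypothesis (i)) is solely to supply, via Theorem~\ref{exceptionalfields2} part~3, the bound $\#\B_S(X;X^\delta,cX^\delta/\log X)\ll X^\varepsilon$, i.e.\ $\tau=0$. This Chebotarev input is no better than what one gets for $S_d$-extensions under the analogous assumption and does not depend on $\del$ at all; it simply says that for almost every $K$ in the family there are $\gg X^\delta/\log X$ primes $p\leq X^\delta$ that split completely, for every fixed $\delta>0$. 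There is no ``$(d-3/2)$-threshold'' coming from a sharper expansion of the fixed-point class function on $A_d$; the $\tau=0$ you'd get for $S_d$ under the Artin conjecture is already as good as possible in this direction.

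The actual source of $(d-3/2)$ is Corollary~\ref{cor:An_ext}, a \emph{counting} result: the number of $A_d$-extensions $K$ with $\del<X$ is $\ll X^{d-3/2+2/\ell+\varepsilon'}$, improving on the trivial $\ll X^{d-1+2/\ell}$ from Lemma~\ref{countingprop}. The improvement is purely Diophantine. By Lemma~\ref{lem:alpha_prop}, the element $\alpha$ realising $\del$ generates $K$ and has minimal polynomial $p^\ell x^d+\cdots\pm q^\ell$; when one additionally insists that the Galois group be contained in $A_d$, Proposition~\ref{prop:sub_G} (via the Galois resolvent $\phi(z;t)$, Lemma~\ref{lem:resolvent_irreducible} and the determinant-method bound of Lemma~\ref{lem:BHB}) shows that one coefficient is constrained, saving a factor $B^{1-\#(S_d/A_d)^{-1}}=B^{1/2}$ over the unconstrained count. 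Feeding the resulting $\theta$ into Proposition~\ref{mainprop2}, whose dyadic slicing of the family by the size of $\del$ relative to powers of $\D$ you also omit, yields $\rho k/(\ell\theta)=\rho k/((d-3/2)\ell+2)$ directly; the $\min$ with $\rho$ comes from the $X^{k/2+\tau+\varepsilon}$ term with $\tau=0$. So the refined Chebotarev argument you sketch would need to be replaced by the Galois-resolvent polynomial count to make the proof go through.
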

Here, Malle's conjecture predicts the optimal exponent $\rho=1/2$. Assuming
this conjecture to be correct, we would get from \eqref{eq:straightforward}
applied to \cite[Theorem 7.2]{PTW} the average bound
\begin{equation*}
  \sum_{K} \#\Cl_K[\ell]^k \ll_{d,\ell,k,\varepsilon} X^{\frac{k+1}{2}-\min\{\frac{1}{2},\frac{k}{2\ell(d-1)}\}+\varepsilon}.
\end{equation*}
Theorem \ref{thm:Ad} improves the latter when $\ell>4$ and $k<(d-1)\ell$.

\subsection{Plan of the paper} 
In \S\ref{sectionkeylemma}, we introduce invariants $\eta_\ell(K)$ of number fields $K$ and use them to refine the key lemma \cite[Lemma 2.3]{EllVentorclass} of Ellenberg and Venkatesh. In \S\ref{sec:framework}, we prove two general results that use the refined key lemma to deduce average and moment bounds for $\ell$-torsion from certain asymptotic counting results. In \S\ref{counting}, we provide such counting results for fields $K$ of bounded $\eta_\ell(K)$. In \S\ref{sec:badfields}, we recall results from the literature that guarantee the existence of enough small split primes. In \S\ref{proofs}, we deduce all of our theorems, and in \S\ref{Dhe} we prove Corollary \ref{cor1}. 

\section{A refined key lemma}\label{sectionkeylemma}
Let 
\begin{alignat*}1
H_K(\alpha)=\prod_{v\in M_K}\max\{1,|\alpha|_v\}^{d_v}
\end{alignat*}
be the multiplicative Weil height of $\alpha\in K$ relative to $K$. Here $M_K$ denotes the set of places of $K$, and for each place $v$ we choose the unique representative $| \cdot |_v$ 
that either extends the usual Archimedean absolute value on $\IQ$ or a usual $p$-adic absolute value on $\IQ$, and $d_v = [K_v : \IQ_v]$ denotes the local degree at $v$.

For every prime ideal $\ppp$ of $K$ lying above a rational prime $p$, we write $e(\ppp)=e(\ppp/p)$ for the ramification index and $f(\ppp)=f(\ppp/p)$ for the inertia degree of $\ppp$ over $p$. For each $\ell\in \IN$ we introduce a new invariant of number fields $K$,

\begin{equation*}
\del=\inf\left\{H_K(\alpha)\: ;\:
  \begin{aligned}
&\alpha\in K,\ \alpha\Oseen_K=(\p_1{\p_2}^{-1})^\ell,\  \text{where $\ppp_1\neq\ppp_2$ are prime}\\ &\text{ideals of $\Oseen_K$ with $e(\ppp_i)=f(\ppp_i)=1$ for $i=1,2$}
\end{aligned}
\right\}.
\end{equation*}
We will show in Lemma \ref{lem:alpha_prop} that an element $\alpha$ of this special form necessarily generates $K$, and moreover its minimal polynomial has a restricted shape. This will allow us to deduce upper bounds for the number of fields $K$ of bounded $\del$ which lead to the improved bounds in our theorems. The following proposition is a refinement of \cite[Lemma 2.3]{EllVentorclass} and central to all our improvements.

\begin{proposition}\label{keylemma}
Let $K$ be a number field of degree $d$, $\delta<1/\ell$, and $\varepsilon>0$.
Moreover, suppose that there are $M$ prime ideals $\ppp$ of $\Oseen_K$ with norm $N(\ppp)\leq \del^{\delta}$ that satisfy $e(\ppp)=f(\ppp)=1$. If $M>0$, we have
$$\#\Cl_K[\ell] \ll_{d,\delta,\varepsilon} \D^{1/2+\varepsilon}M^{-1}.$$
\end{proposition}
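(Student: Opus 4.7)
The plan is to adapt the approach of Ellenberg--Venkatesh (\cite[Lemma 2.3]{EllVentorclass}), with the new invariant $\del$ replacing $\D$ in the step that rules out a certain ``bad'' element $\alpha$.

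First, for each $c \in \Cl_K[\ell]$ and each of the $M$ prime ideals $\ppp_j$ from the hypothesis, Minkowski's convex body theorem applied inside the class $c \cdot [\ppp_j]$ produces an integral representative $I(c,j)$ with $N(I(c,j)) \leq C_d \D^{1/2}$. This gives a map
\[
\phi\colon \Cl_K[\ell] \times \{1,\ldots,M\} \longrightarrow \{\mathfrak{a} \subseteq \Oseen_K : N(\mathfrak{a}) \leq C_d \D^{1/2}\}.
\]
The standard estimate $\#\{\mathfrak{a} \subseteq \Oseen_K : N(\mathfrak{a}) \leq X\} \ll_{d,\varepsilon} X \D^{\varepsilon}$ (coming, e.g., from a residue bound for the Dedekind zeta function of $K$) shows the codomain has size $\ll_{d,\varepsilon} \D^{1/2+\varepsilon}$, so the proposition will follow at once if $\phi$ is injective.

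Suppose $\phi(c_1,j_1) = \phi(c_2,j_2)$; then $c_1[\ppp_{j_1}] = c_2[\ppp_{j_2}]$, hence $c_1 c_2^{-1} = [\ppp_{j_2}\ppp_{j_1}^{-1}]$. If $j_1 = j_2$ this forces $c_1 = c_2$; otherwise, using $(c_1 c_2^{-1})^{\ell} = 1$, we obtain $(\ppp_{j_2}\ppp_{j_1}^{-1})^{\ell} = (\alpha)$ for some $\alpha \in K^{*}$. Such an $\alpha$ is precisely the kind appearing in the definition of $\del$, so $H_K(\alpha) \geq \del$. Conversely, since $e(\ppp_{j_i}) = f(\ppp_{j_i}) = 1$, a direct calculation from the definition of $H_K$ shows that the non-Archimedean contribution to $H_K(\alpha)$ is exactly $\max\{N(\ppp_{j_1}),N(\ppp_{j_2})\}^{\ell} \leq \del^{\delta\ell}$, which is strictly smaller than $\del$ since $\delta \ell < 1$. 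Note that the hypothesis $M > 0$ forces $\del \geq 2^{1/\delta}$, so we may assume $\del$ exceeds any prescribed constant depending only on $d$ and $\delta$.

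The remaining task, and the main technical obstacle, is to choose the unit multiple $u\alpha$ so that the Archimedean factor of $H_K(u\alpha)$ is bounded above by $\del^{1-\delta\ell}/C_{d,\delta}$ for a suitable constant $C_{d,\delta}$. Combining this with the non-Archimedean bound yields $H_K(u\alpha) < \del$, contradicting $H_K(u\alpha) \geq \del$ and thereby ruling out the collision. I expect this Archimedean control to come from exploiting the small size of $|N_{K/\IQ}(\alpha)| = (N(\ppp_{j_2})/N(\ppp_{j_1}))^{\ell}$, applying Dirichlet's unit theorem to redistribute the Archimedean absolute values of $u\alpha$ as evenly as possible, combined with a Minkowski-type argument on the fractional ideal $(\alpha)$ viewed as a lattice in $K \otimes_{\IQ} \IR$. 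The delicate point is to carry out this optimisation in a way that absorbs any dependence on the regulator of $K$; this is precisely what the refined invariant $\del$, defined as an infimum of heights over \emph{all} admissible pairs and generators, is designed to facilitate.
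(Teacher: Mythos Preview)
Your approach follows the Ellenberg--Venkatesh template closely, and the reduction to ruling out collisions is correct. However, the final step---controlling the Archimedean height of a \emph{single} $u\alpha$ by a constant depending only on $d,\delta$---cannot succeed as stated, and the regulator cannot be ``absorbed'' in the way you hope.

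Here is the obstruction. With $\alpha\Oseen_K=(\ppp_{j_2}\ppp_{j_1}^{-1})^\ell$, write $l(\alpha)=(d_v\log|\alpha|_v)_{v\mid\infty}$. Adjusting by a unit translates $l(\alpha)$ by the unit lattice $l(\Oseen_K^*)$, which lives in the trace-zero hyperplane and has covolume essentially $R_K$. The best one can do is bring $l(u\alpha)$ into a fixed fundamental cell $F$ (modulo the diagonal), but the diameter of $F$ is of order $R_K^{1/q}$, with $q$ the unit rank. Hence $\prod_{v\mid\infty}\max\{1,|u\alpha|_v\}^{d_v}$ can be as large as $\exp(c\,R_K^{1/q})$; there is no bound in terms of $d,\delta$ alone. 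Concretely, in a real quadratic field with enormous fundamental unit, a single pair $\ppp_{j_1},\ppp_{j_2}$ with $[\ppp_{j_1}]^\ell=[\ppp_{j_2}]^\ell$ may yield generators $\alpha$ all of whose unit multiples have Archimedean height $\gg e^{cR_K}$. The definition of $\del$ as an infimum does not help here: it is a lower bound on heights, whereas you need an \emph{upper} bound on the height of your specific $\alpha$.

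The paper does not attempt to rule out pairwise collisions. It works instead with $G=\Cl_K/\Cl_K[\ell]$ and, assuming $\#G<M/R$ with $R=\lceil cR_K\rceil$, obtains by pigeonhole $R+1$ primes $\ppp_1,\dots,\ppp_{R+1}$ in a single coset of $\Cl_K[\ell]$, hence elements $\alpha_i$ ($1\le i\le R$) with $\alpha_i\Oseen_K=(\ppp_i\ppp_{R+1}^{-1})^\ell$. After normalising each $l(\alpha_i)$ into $F$, one partitions $F$ into at most $R-1\approx cR_K$ subcells; each then has diameter $\ll_d (R_K/R)^{1/q}\le c^{-1/q}$, which \emph{is} bounded independently of $R_K$. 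A second pigeonhole places some $\alpha_i,\alpha_j$ in the same subcell, so that $\alpha_i/\alpha_j$ has Archimedean height $\le e^{O_d(c^{-1/d})}$ and hence total height $<\del$ once $c$ is large, giving the contradiction. This yields $\#G\ge M/R\gg_{d,\delta} M/R_K$, and the regulator cancels via $\#\Cl_K[\ell]\cdot\#G\cdot R_K\ll_{d,\varepsilon}\D^{1/2+\varepsilon}$. The idea you are missing is precisely this second pigeonhole in logarithmic space, which trades the regulator against the \emph{number} of colliding primes rather than against a single pair.
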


\begin{proof}
We may assume that $\del\geq 2$. Write $R_K$ for the regulator of $K$ and set $G:=\Cl_K/\Cl_K[\ell]$, so that $\#\Cl_K[\ell]\cdot \#G\cdot R_K=\#\Cl_K R_K\ll_{d,\varepsilon} \D^{1/2+\varepsilon}$. Hence, we need to show that $\#G \gg_{d, \varepsilon} M/R_K$. Fix a constant $c>0$ and write $R:=\lceil c R_K\rceil$. Our goal is to show that $\#G \geq M/R$, if $c$ was chosen sufficiently large in terms of only $d$ and $\delta$. Since $R_K\gg_d 1$, we may assume that $R\geq 2$. Suppose $\#G<M/R$. Then, by the pigeonhole principle, the classes $[\ppp]$ of at least $R+1$ out of our $M$ prime ideals $\ppp$ must lie in the same coset in $G$. We call these prime ideals $\ppp_1,\ldots,\ppp_{R+1}$ to obtain $[\p_{R+1}]\Cl_K[\ell]=[\p_{i}]\Cl_K[\ell]$ for all $1\leq i \leq R$, and thus find $\alpha_i\in K$ with
\begin{alignat*}1
\alpha_i\Oseen_K=(\p_i\p_{R+1}^{-1})^\ell.
\end{alignat*}
First suppose that $K$ is imaginary quadratic. We choose distinct $i$ and $j$ between $1$ and $R$ and conclude
\begin{alignat*}1
H_K(\alpha_i/\alpha_j)\leq \max\{N(\p_i),N(\p_j)\}^\ell<\del,
\end{alignat*}
which contradicts the minimality assumption in the definition of $\del$.

Now suppose that $K$ is not imaginary quadratic.
Let $l:K^*\to\IR^{q+1}$ be the classical logarithmic embedding, where $q+1$ is the number of Archimedean places of $K$. After multiplying $\alpha_i$ by a unit we can assume that $l(\alpha_i)=(d_v\log|\alpha_i|_v)_{v| \infty}\in F+(d_v)_{v| \infty}(-\infty,\infty)$,
where $F$ is a fundamental cell of the unit lattice $l(\Oseen^*)\subset \IR^{q+1}$. We take $F=[0,1)u_1+\cdots+[0,1)u_q$ where $u_1,\ldots,u_q$ is a Minkowski reduced basis of the unit lattice. Write $l(\alpha_i)=v_i+\gamma_i(d_v)_{v| \infty}$, where $v_i\in F$ and $\gamma_i\in (-\infty,\infty)$. We note that the Euclidean length $|u_i|\gg_d 1$, which follows easily from Northcott's Theorem (see, e.g., \cite[below (8.2)]{Widmer10}).
Since $F$ comes from a Minkowski reduced basis we can partition $F$ into at most $R-1$ subcells of diameter $\ll_d (R_K/R)^{1/q}\leq c^{-1/q}\leq c^{-1/d}$. Again by the pigeonhole principle, we find distinct $i$ and $j$ such that $v_i$ and $v_j$ lie in the same 
subcell and hence $|(v_i-v_j)_v|\ll_d c^{-1/d}$ for all $v| \infty$. Without loss of generality, we may assume that $\gamma_i\leq \gamma_j$. Since $|\alpha_i/\alpha_j|_v=e^{(1/d_v)(v_i-v_j)_v+(\gamma_i-\gamma_j)}$, we conclude that 
\begin{alignat*}1
  |\alpha_i/\alpha_j|_v= e^{O_d(c^{-1/d})+(\gamma_i-\gamma_j)}\leq e^{O_d(c^{-1/d})} \quad \text{holds for all } v |\infty.
\end{alignat*}
Since $\alpha\Oseen_K=(\p_i\p_j^{-1})^\ell$, this shows that
\begin{equation*}
H_K(\alpha_i/\alpha_j)\leq e^{O_d(c^{-1/d})}N(\p_j)^{\ell}\leq e^{O_d(c^{-1/d})}\del^{\ell\delta}.
\end{equation*}
Since $\ell\delta<1$ and $\del\geq 2$, we can choose $c$ large enough in terms of $d,\delta$ to ensure that $H_K(\alpha_i/\alpha_j)<\del$, contradicting the definition of $\del$. Thus, with this choice of $c$ we get $\#G\geq M/R\gg_{d,\delta} M/R_K$.
\end{proof}

\section{Framework}\label{sec:framework}

Let $d>1$ be an integer. We set
\begin{alignat}1\label{coll}
S_{\IQ,d}=\{K\subset \Qbar\: ;\: [K:\IQ]=d\}
\end{alignat}
for the collection of all number fields of degree $d$. For a subset $S\subset S_{\IQ,d}$
we set 
\begin{alignat*}1
\SX&:=\{K\in S\: ;\: X\leq \D<2X\},\\
\B_S(X;Y,M)&:=\{K\in \SX\: ;\: \text{ at most $M$ primes $p\leq Y$ split completely in $K$}\},\\
\Ndel(S,X)&:=\#\{K\in S\: ;\: \del<X\},\\
\Ndisc(S,X)&:=\#\SX.
\end{alignat*}
Throughout this section we assume that $\theta,\rho,\ce,\cd>0$ are such that for all $X\geq 2$
\begin{align}
\label{ND} \Ndisc(S,X)&\leq \ce X^\rho,\\
\label{Ndel} \Ndel(S,X)&\leq \cd X^{\theta}.
\end{align}
We can now formulate our two main propositions. They differ in their assumption on $\#\B_S(X; X^{\delta},c X^{\delta}/\log X))$.
In the first case we have an upper bound that gets worse when $\delta$ gets smaller.
This situation happens in the work  \cite{EllenbergPierceWood} based on probabilistic methods.
In the proof
we decompose the set of fields in those fields with ``small'' invariant $\del$ compared to the discriminant, those fields with ``large'' invariant $\del$ which are not bad
(i.e., they have ``sufficiently'' many small splitting primes), and those fields with ``large'' invariant $\del$ which are bad. In the first and third case, we use the trivial bound to estimate $\#\Cl_K[\ell]$, and in the second case Proposition \ref{keylemma}.

\begin{proposition}\label{mainprop1}
Suppose $S\subset S_{\IQ,d}$, $\delta_0>0$, and that \eqref{ND}, \eqref{Ndel} hold for $\theta,\rho,\ce,\cd>0$.  
Moreover, suppose for every $\delta\in (0,\delta_0]$ and $\varepsilon\in(0,1)$ there are positive $\cv(\delta,\varepsilon)$ and $\cf(\delta,\varepsilon)$  such that
\begin{alignat*}1
\#\B_S(X;X^{\delta},\cv(\delta,\varepsilon)X^{\delta}/\log X))\leq \cf(\delta,\varepsilon)X^{\rho-\delta+\varepsilon}
\end{alignat*}
holds for all $X\geq 2$. Then we have, for all $\varepsilon\in (0,1)$,
$$\sum_{K\in \SX} \#\Cl_K[\ell] \ll_{d,\ell,\theta,\rho,\ce,\cd,\delta_0,\cv(\cdot,\cdot),\cf(\cdot,\cdot),\varepsilon} 
 X^{\frac{1}{2}+\rho-\min\{\delta_0,\frac{\rho}{\ell\theta+1}\}+\varepsilon}.
$$
\end{proposition}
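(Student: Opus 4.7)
The plan is to combine the refined key lemma (Proposition \ref{keylemma}) with the three counting hypotheses by partitioning $\SX$ into three pieces and balancing the resulting exponents. Fix $\varepsilon\in(0,1)$ and introduce auxiliary parameters $\mu\in(0,\delta_0]$ and $\lambda>\ell\mu$, to be optimised at the end, and set $\delta:=\mu/\lambda$. The strict inequality $\lambda>\ell\mu$ guarantees $\delta<1/\ell$, so that $\delta$ is a legal input to Proposition \ref{keylemma}, while the choice $\delta=\mu/\lambda$ is designed so that $\del\geq X^\lambda$ implies $X^\mu\leq\del^\delta$.

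I would then decompose $\SX=A_1\sqcup A_2\sqcup A_3$, where $A_1:=\{K\in\SX\,;\,\del<X^\lambda\}$ is the set of fields with small specialised invariant, $A_2:=(\SX\setminus A_1)\cap\B_S(X;X^\mu,\cv(\mu,\varepsilon)X^\mu/\log X)$ is the remaining set with ``too few'' small splitting primes, and $A_3:=\SX\setminus(A_1\cup A_2)$ is the complement. On $A_1$, the trivial bound $\#\Cl_K[\ell]\leq\#\Cl_K\ll_{d,\varepsilon}D_K^{1/2+\varepsilon}\ll X^{1/2+\varepsilon}$ together with \eqref{Ndel} gives $\sum_{K\in A_1}\#\Cl_K[\ell]\ll X^{1/2+\lambda\theta+\varepsilon}$. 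On $A_2$, the trivial bound together with the hypothesis on $\B_S$ yields $\sum_{K\in A_2}\#\Cl_K[\ell]\ll X^{1/2+\rho-\mu+2\varepsilon}$. For $K\in A_3$, one has $\del\geq X^\lambda$ together with more than $\cv(\mu,\varepsilon)X^\mu/\log X$ rational primes $p\leq X^\mu$ splitting completely in $K$; each such $p$ contributes $d$ prime ideals $\ppp$ of $\Oseen_K$ with $e(\ppp)=f(\ppp)=1$ and $N(\ppp)=p\leq X^\mu=X^{\lambda\delta}\leq\del^\delta$. Proposition \ref{keylemma} then produces $\#\Cl_K[\ell]\ll X^{1/2+\varepsilon}\log X/X^\mu$, and summing over the at most $\ce X^\rho$ fields in $\SX$ via \eqref{ND} yields $\sum_{K\in A_3}\#\Cl_K[\ell]\ll X^{1/2+\rho-\mu+2\varepsilon}$.

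Adding the three contributions produces the master bound $\sum_{K\in\SX}\#\Cl_K[\ell]\ll X^{1/2+\lambda\theta+\varepsilon}+X^{1/2+\rho-\mu+2\varepsilon}$, and it remains to minimise the larger exponent subject to $\mu\in(0,\delta_0]$ and $\lambda>\ell\mu$. Formally balancing $\lambda\theta=\rho-\mu$ at the boundary $\lambda=\ell\mu$ forces $\mu=\rho/(\ell\theta+1)$, so the plan is to take $\mu:=\min\{\delta_0,\rho/(\ell\theta+1)\}$ and $\lambda$ marginally larger than $\ell\mu$ (and no larger than $(\rho-\mu)/\theta$, which is possible precisely because $\mu\leq\rho/(\ell\theta+1)$). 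After absorbing the resulting infinitesimal perturbation into the $\varepsilon$-loss, both exponents are bounded by $1/2+\rho-\min\{\delta_0,\rho/(\ell\theta+1)\}+\varepsilon$, matching the claim.

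The only slightly delicate point is the strict inequality $\delta<1/\ell$ required by Proposition \ref{keylemma}, which forces the infinitesimal perturbation of $\lambda$ described above; the remainder of the argument is essentially bookkeeping, since the three counting hypotheses are already pre-aligned with the thresholds $X^\mu$ on split primes and $X^\lambda$ on $\del$.
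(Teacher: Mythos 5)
Your proof is correct and follows essentially the same strategy as the paper's: a three-way partition of $\SX$ into (i) fields with small $\eta_\ell(K)$, handled by the trivial bound together with \eqref{Ndel}, (ii) the ``bad'' fields with too few small split primes, handled by the trivial bound and the hypothesis on $\#\B_S$, and (iii) the remaining fields, handled by Proposition~\ref{keylemma}, followed by balancing the resulting exponents. The only superficial differences are where you insert the $\varepsilon$-perturbation to enforce $\delta<1/\ell$ (you nudge the $\del$-threshold $\lambda$ above $\ell\mu$, whereas the paper shrinks the split-prime range to $X^{(1-\varepsilon)\delta_0}$), and that the paper splits into two cases according to whether $\rho\ell/(\ell\theta+1)\gtrless\delta_0\ell$ rather than writing the threshold directly as a minimum — both are cosmetic.
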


\begin{proof}
Let $\varepsilon\in(0,1)$.
For sake of clarity, we suppress the dependence of implicit constants in our notation and write $\ll$ instead of $\ll_{d,\ell,\theta,\rho,\ce,\cd,\delta_0,\cv(\cdot,\cdot),\cf(\cdot,\cdot),\varepsilon}$ throughout the proof. We define 
\begin{alignat*}1
\gamma_0:=\frac{\rho\ell}{\ell\theta+1}.
\end{alignat*}
Hence we have 
\begin{alignat*}1
\gamma_0\theta=\rho-\frac{\gamma_0}{\ell}.
\end{alignat*}
First let us assume that $\ell\leq \frac{1}{\theta}(\frac{\rho}{\delta_0}-1)$, and thus
\begin{alignat*}1
\gamma_0\geq \delta_0\ell.
\end{alignat*}
We decompose $\SX$ into the three subsets
\begin{alignat*}3
M_0&=\{K\in \SX\: ;\: \del\leq \D^{\delta_0\ell}\},\\
M'_{1}&=\{K\in \SX\: ;\: \del>\D^{\delta_0\ell}\}\smallsetminus \B_S(X;X^{(1-\varepsilon)\delta_0},c X^{(1-\varepsilon)\delta_0}/\log X),\\
M''_{1}&=\{K\in \SX\: ;\: \del>\D^{\delta_0\ell}\}\cap \B_S(X;X^{(1-\varepsilon)\delta_0},c X^{(1-\varepsilon)\delta_0}/\log X), 
\end{alignat*}
where $c=\cv((1-\varepsilon)\delta_0,\varepsilon)$ comes from the assumptions of the proposition. 
Using \eqref{eq:trivial},
 we get
\begin{alignat*}1
\sum_{K\in M_0} \#\Cl_K[\ell] \ll \sum_{K\in M_0} \D^{\frac{1}{2}+\varepsilon}\leq \#M_0\cdot (2X)^{\frac{1}{2}+\varepsilon}.
\end{alignat*}
Since $\#M_0\leq\Ndel(S,(2X)^{\delta_0\ell})\ll X^{\delta_0\ell\theta}$ 
and $\delta_0\ell\leq \gamma_0$ we conclude
\begin{alignat*}1
\sum_{K\in M_0} \#\Cl_K[\ell]\ll X^{\frac{1}{2}+\gamma_0\theta+\varepsilon}\leq X^{\frac{1}{2}+\rho-\delta_0+\varepsilon}.
\end{alignat*}
Since by assumption $\#M''_1\ll X^{\rho-(1-\varepsilon)\delta_0+\varepsilon}$, we find similarly
\begin{alignat*}1
\sum_{K\in M''_1} \#\Cl_K[\ell]\ll X^{\frac{1}{2}+\rho-\delta_0+(2+\delta_0)\varepsilon}.
\end{alignat*}
For the sum over $M'_{1}$ we use Proposition \ref{keylemma}, with the valid choice $M=c X^{(1-\varepsilon)\delta_0}/\log X$, and then bound $ \#M'_1$ by  (\ref{ND})
to conclude that
\begin{alignat*}1
\sum_{K\in M'_{1}} \#\Cl_K[\ell] &\ll \sum_{K\in M'_{1}} \D^{\frac{1}{2}-(1-\varepsilon)\delta_0+2\varepsilon} \leq \#M'_1\cdot (2X)^{\frac{1}{2}-\delta_0+(2+\delta_0)\varepsilon}\ll X^{\frac{1}{2}+\rho-\delta_0+(2+\delta_0)\varepsilon}.
\end{alignat*}
This proves the proposition when $\ell\leq \frac{1}{\theta}(\frac{\rho}{\delta_0}-1)$. Now let us assume that $\ell>\frac{1}{\theta}(\frac{\rho}{\delta_0}-1)$, and thus 
\begin{alignat*}1
\gamma_0< \delta_0\ell.
\end{alignat*}
We now define $M_0, M'_1$ and $M''_1$ exactly in the same way but with $\delta_0$ replaced by $\gamma_0/\ell$. Arguing in exactly the same way as in the previous case we get 
\begin{alignat*}1
\sum_{K\in \SX} \#\Cl_K[\ell] \ll X^{\frac{1}{2}+\rho-\frac{\gamma_0}{\ell}+(2+\frac{\gamma_0}{\ell})\varepsilon}\leq
X^{\frac{1}{2}+\rho-\frac{\rho}{\ell\theta+1}+(2+\rho)\varepsilon}.
\end{alignat*}
\end{proof}

Our next main proposition applies when the bound for $\#\B_S(X; X^{\delta},c X^{\delta}/\log X))$
is uniform in $\delta$. For $d=2$ such a bound can be established  by using the large sieve, as shown in
\cite{Heath-BrownPierce}. It is a new innovation of the recent work \cite{PTW} that such uniform bounds are
also available for a much larger class of families $S$. 
In this setting it turns out beneficial to use a finer decomposition of the set of fields than just those 
fields with ``small'' invariant $\del$, and  those fields with ``large'' invariant $\del$.

\begin{proposition}\label{mainprop2}
Suppose $S\subset S_{\IQ,d}$, $\tau\geq 0$, and that \eqref{ND}, \eqref{Ndel} hold for $\theta,\rho,\ce,\cd>0$.
Moreover, suppose for every $\delta>0$ and $\varepsilon\in(0,1/\ell)$ there are positive $\cv(\delta,\varepsilon)$ and $\cf(\delta,\varepsilon)$ such that
\begin{alignat*}1
\#\B_S(X;X^{\delta},\cv(\delta,\varepsilon) X^{\delta}/\log X)\leq \cf(\delta,\varepsilon) X^{\tau+\varepsilon}
\end{alignat*}
holds for all $X\geq 2$. Then we have, for all $k\geq 0$ and $\varepsilon\in(0,1/\ell)$, 
$$\sum_{K\in \SX} \#\Cl_K[\ell]^k \ll_{d,\theta,\rho,\ce,\cd,\cv(\cdot,\cdot),\cf(\cdot,\cdot),\ell,k,\tau,\varepsilon}
X^{\frac{k}{2}+\rho-\frac{\rho k}{\ell\theta}+\varepsilon}+X^{\frac{k}{2}+\tau+\varepsilon}.
$$
\end{proposition}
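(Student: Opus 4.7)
The plan is to adapt the three-way decomposition used in the proof of Proposition \ref{mainprop1} to the higher-moment setting. The key structural difference is that here the hypothesis on $\#\B_S$ is uniform in $\delta$ (yielding $X^{\tau+\varepsilon}$ rather than $X^{\rho-\delta+\varepsilon}$), so the bad-field contribution is decoupled from $\delta$ and $\delta$ can be chosen purely to balance the other two contributions.

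First I would fix a parameter $\delta \in (0, 1/\ell)$ to be optimised at the end, set $c = \cv((1-\varepsilon)\delta, \varepsilon)$, and decompose $\SX$ into
\begin{align*}
M_0 &= \{K \in \SX : \del \leq \D^{\delta\ell}\}, \\
M_1' &= \{K \in \SX : \del > \D^{\delta\ell}\} \setminus \B_S\bigl(X; X^{(1-\varepsilon)\delta}, c X^{(1-\varepsilon)\delta}/\log X\bigr), \\
M_1'' &= \{K \in \SX : \del > \D^{\delta\ell}\} \cap \B_S\bigl(X; X^{(1-\varepsilon)\delta}, c X^{(1-\varepsilon)\delta}/\log X\bigr).
\end{align*}
For $M_0$, I would use the trivial bound \eqref{eq:trivial} raised to the $k$-th power together with \eqref{Ndel} to get a contribution of $\ll X^{k/2 + \delta\ell\theta + O(\varepsilon)}$. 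For $M_1''$, the hypothesis (applied at $(1-\varepsilon)\delta$) together with the trivial bound would give $\ll X^{k/2 + \tau + O(\varepsilon)}$.

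For $M_1'$ I would exploit the inequality $\del > \D^{\delta\ell} \geq X^{\delta\ell}$, which forces any rational prime $p \leq X^{(1-\varepsilon)\delta}$ splitting completely in $K$ to produce prime factors of $\Oseen_K$ of norm at most $X^{(1-\varepsilon)\delta} < \del^{(1-\varepsilon)/\ell}$. Since $K \notin \B_S(\ldots)$, the number of such prime ideals exceeds $c X^{(1-\varepsilon)\delta}/\log X$, and Proposition \ref{keylemma} applied with parameter $(1-\varepsilon)/\ell < 1/\ell$ gives $\#\Cl_K[\ell] \ll X^{1/2 - (1-\varepsilon)\delta + O(\varepsilon)}$. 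Raising to the $k$-th power and summing via \eqref{ND} then produces a contribution of $\ll X^{k/2 + \rho - k\delta + O(\varepsilon)}$.

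At the end I would combine the three estimates and optimise $\delta$ by equating the exponents coming from $M_0$ and $M_1'$, i.e.\ choosing $\delta$ so that $\delta\ell\theta$ and $\rho - k\delta$ are matched. The main technical hurdle should be purely bookkeeping: tracking the compounded $\varepsilon$-losses (notably the $\log X$ factor and the $\varepsilon$ absorbed into Proposition \ref{keylemma}) and verifying that the restriction $\varepsilon \in (0, 1/\ell)$ suffices to keep $(1-\varepsilon)/\ell$ admissible for the key lemma throughout the argument, while also ensuring that the chosen $\delta$ stays below $1/\ell$.
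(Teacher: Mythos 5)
The proposal does not actually achieve the claimed exponent. With your single threshold $\delta$, optimised by balancing $\delta\ell\theta$ against $\rho - k\delta$, you arrive at $\delta = \rho/(\ell\theta + k)$, giving the first-term exponent
\[
\tfrac{k}{2} + \frac{\rho\ell\theta}{\ell\theta + k} + O(\varepsilon) = \tfrac{k}{2} + \rho - \frac{\rho k}{\ell\theta + k} + O(\varepsilon),
\]
which is strictly larger than the stated $\tfrac{k}{2} + \rho - \tfrac{\rho k}{\ell\theta} + O(\varepsilon)$ whenever $k>0$. (The difference is $\rho k^2/\bigl(\ell\theta(\ell\theta+k)\bigr)>0$.) So your three-way split reproduces the exponent of Proposition~\ref{mainprop1} (which with $k=1$ yields $\rho/(\ell\theta+1)$), but not the sharper saving of Proposition~\ref{mainprop2}.

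The missing idea is an iterated stratification. The paper partitions $\SX$ into $N+2$ pieces by the size of $\del$, using a chain of thresholds $\D^{\gamma_0}<\D^{\gamma_1}<\cdots<\D^{\gamma_N}$ rather than one. For the intermediate pieces $M_i=\{K: \D^{\gamma_{i-1}}\leq\del<\D^{\gamma_i}\}$, both the upper cut $\del<\D^{\gamma_i}$ (so $\#M_i\ll X^{\gamma_i\theta}$ via \eqref{Ndel}) and the lower cut $\del\geq\D^{\gamma_{i-1}}$ (so Proposition~\ref{keylemma} applies at scale $X^{\gamma_{i-1}(1/\ell-\varepsilon)}$) are exploited simultaneously, yielding contributions $\ll X^{\frac{k}{2}-\frac{\gamma_{i-1}k}{\ell}+\gamma_i\theta+O(\varepsilon)}$. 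Only the top piece $M_{N+1}$ falls back on the crude count $\#M_{N+1}\ll X^\rho$. Choosing $\gamma_i = \gamma_0\sum_{r=0}^i (k/\ell\theta)^r$ balances all the exponents, and letting $N\to\infty$ pushes the balanced exponent down to $\tfrac{k}{2}+\rho-\tfrac{\rho k}{\ell\theta}$. Crucially, this uses a lower \emph{and} upper bound on $\del$ in each slab, which your set $M_1'$ lacks (it has no upper bound on $\del$, so only $\Ndisc$ is available to count it). You correctly identified that the uniformity of the $\#\B_S$ hypothesis in $\delta$ is the structural advantage over Proposition~\ref{mainprop1}, but it is precisely this uniformity that makes the multi-scale iteration feasible (the bad-field contribution stays $X^{\tau+\varepsilon}$ at every threshold $\gamma_{i-1}$); stopping at one scale leaves the gain on the table.
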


\begin{proof}
Let $\varepsilon\in(0,1/\ell)$.
We decompose $\SX$ into $N+2$ subsets $M_i$,
where $N=N(\varepsilon)$ will be chosen later.
Let $0=\gamma_{-1}\leq \gamma_0\leq \gamma_1\leq \cdots\leq \gamma_N$ and set 
\begin{alignat*}3
M_{i}&=\{K\in \SX\: ;\: \D^{\gamma_{i-1}}\leq\del<\D^{\gamma_{i}}\} \qquad (0\leq i\leq N),\\
M_{N+1}&=\{K\in \SX\: ;\: \D^{\gamma_N}\leq\del\}. 
\end{alignat*}
Furthermore, for $1\leq i\leq N+1$ we decompose $M_i$ into the two sets
\begin{alignat*}3
M'_{i}&=M_i\smallsetminus \B_S(X;X^{\gamma_{i-1}(1/\ell-\varepsilon)},c'_i X^{\gamma_{i-1}(1/\ell-\varepsilon)}/\log X),\\
M''_{i}&=M_i\cap \B_S(X;X^{\gamma_{i-1}(1/\ell-\varepsilon)},c'_i X^{\gamma_{i-1}(1/\ell-\varepsilon)}/\log X),
\end{alignat*}
where $c'_i=\cv(\gamma_{i-1}(1/\ell-\varepsilon),\varepsilon)$.
Hence, we have partitioned
$\SX$ into the $1+2(N+1)$ subsets $M_0,M'_i,M''_i$ ($1\leq i\leq N+1$). Throughout this proof, we suppress the implicit constants in our notation and write $\ll$ for $\ll_{d,\theta,\rho,\ce,\cd,\cv(\cdot,\cdot),\cf(\cdot,\cdot),\ell,k,\tau,\varepsilon,\gamma_{0},\ldots,\gamma_N}$. The values of $\gamma_{0},\ldots,\gamma_N$ are fixed later in the proof depending only on the other parameters. Next we record the estimates 
\begin{alignat*}3
&\#M_0&&\leq \Ndel(S,(2X)^{\gamma_0})\ll X^{\gamma_0\theta},&\\
&\#M'_{i}&&\leq\#M_{i}\leq \Ndel(S,(2X)^{\gamma_i})\ll X^{\gamma_i\theta}  &(1\leq i\leq N),\\
&\#M'_{N+1}&&\leq\#M_{N+1}\leq \Ndisc(S,X)\ll X^{\rho},&\\
&\#M''_{i}&&\ll X^{\tau+\varepsilon} &(1\leq i\leq N+1). 
\end{alignat*}
We use \eqref{eq:trivial}
to estimate the sums over $M_0$ and $M_i''$ ($1\leq i\leq N+1$),
\begin{alignat*}1
\sum_{K\in M_0} \#\Cl_K[\ell]^k &\ll \sum_{K\in M_0} \D^{(\frac{1}{2}+\varepsilon)k}\leq \#M_0\cdot (2X)^{\frac{k}{2}+k\varepsilon}\ll X^{\frac{k}{2}+\gamma_0\theta+k\varepsilon},\\
\sum_{K\in M''_i} \#\Cl_K[\ell]^k&\ll \sum_{K\in M''_i} \D^{(\frac{1}{2}+\varepsilon)k}\leq \#M''_i\cdot (2X)^{\frac{k}{2}+k\varepsilon}
\ll X^{\frac{k}{2}+\tau+(k+1)\varepsilon}.
\end{alignat*}
From Proposition \ref{keylemma},  with the eligible choice $M=c_i' X^{\gamma_{i-1}(1/\ell-\varepsilon)}/\log X$, we conclude for $1\leq i\leq N$ that
\begin{alignat*}1
\sum_{K\in M'_{i}} \#\Cl_K[\ell]^k &\ll \sum_{K\in M'_{i}} \D^{(\frac{1}{2}-\gamma_{i-1}(\frac{1}{\ell}-\varepsilon)+2\varepsilon)k} \ll X^{\frac{k}{2}-\frac{\gamma_{i-1}k}{\ell}+\gamma_{i}\theta+k(2+\gamma_N)\varepsilon}
\end{alignat*}
and similarly
\begin{alignat*}1
\sum_{K\in M'_{N+1}} \#\Cl_K[\ell]^k &\ll X^{\frac{k}{2}+\rho-\frac{\gamma_N k}{\ell}+k(2+\gamma_N)\varepsilon}.
\end{alignat*}
For $0\leq i\leq N$, we define $Q_i=\sum_{r=0}^{i}q^r$, where $q=\frac{k}{\ell\theta}$. With these quantities in place, we proceed to choose our $\gamma_i$ as follows,
\begin{alignat*}1
\gamma_0=\gamma_0(N)=\frac{\rho\ell}{\ell\theta+kQ_N}\quad\text{ and }\quad\gamma_i=\gamma_0Q_i\leq \frac{\rho\ell}{k}\quad (1\leq 1\leq N).
\end{alignat*}
Then a quick computation shows that 
\begin{alignat*}1
\frac{k}{2}+\gamma_0\theta=\frac{k}{2}-\frac{\gamma_{i-1}k}{\ell}+\gamma_{i}\theta=\frac{k}{2}+\rho-\frac{\gamma_N k}{\ell},
\end{alignat*}
which allows us to estimate
\begin{alignat*}1
\sum_{K\in \SX} \#\Cl_K[\ell]^k \ll
X^{\frac{k}{2}+\gamma_0\theta+(2k+\rho\ell)\varepsilon}+X^{\frac{k}{2}+\tau+(k+1)\varepsilon}.
\end{alignat*}
The only task left is to choose $N=N(\varepsilon)$. We observe that
\begin{alignat*}1
\tilde{\gamma}_0:=\lim_{N\rightarrow \infty}\gamma_0(N)=\begin{cases}
 \frac{\rho}{\theta}-\frac{\rho k}{\ell \theta^2} &\text{ if }q<1,\\
 0 &\text{ if }q\geq 1.
\end{cases}
\end{alignat*}
Hence, choosing $N=N(\varepsilon)$ big enough to ensure $\gamma_0\theta\leq \tilde{\gamma}_0\theta +\varepsilon$, we conclude that
\begin{alignat*}1
\sum_{K\in \SX} \#\Cl_K[\ell]^k \ll
X^{\frac{k}{2}+\tilde{\gamma}_0\theta+(1+2k+\rho\ell)\varepsilon}+X^{\frac{k}{2}+\tau+(k+1)\varepsilon},
\end{alignat*}
which proves the proposition.
\end{proof}

\section{Counting fields of bounded $\del$}\label{counting}

For $\alpha\in \Qbar$ we write $D_\alpha\in \IZ[x]$ for the minimal polynomial of $\alpha$ over $\IZ$, i.e., the irreducible polynomial with positive leading coefficient that satisfies $D_\alpha(\alpha)=0$. Our estimates for $\Ndel(S,X)$ hinge upon the following observation.

\begin{lemma}\label{lem:alpha_prop}
  Let $\alpha\in K$ be such that $\alpha\Oseen_K=(\p_1{\p_2}^{-1})^\ell$, with distinct prime ideals $\p_1,\p_2$ of $\Oseen_K$ that satisfy $e(\p_i)=f(\p_i)=1$ for $i=1,2$. Then $K=\IQ(\alpha)$ and the minimal polynomial $D_\alpha$ has the form
\begin{alignat}1\label{minpoly}
D_\alpha=p^\ell x^d+a_1x^{d-1}+\cdots+a_{d-1}x\pm q^{\ell},
\end{alignat}
where $a_1,\ldots,a_{d-1}\in\IZ$ and $p,q$ are the primes below $\p_2$ and $\p_1$, respectively.
\end{lemma}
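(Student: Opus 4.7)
The plan is to control the $p$-adic valuations of the Galois conjugates of $\alpha$, where $p$ denotes the rational prime below $\p_2$. Fix an embedding $\overline{\IQ}\hookrightarrow\overline{\IQ}_p$, and consider the $d$ $\IQ$-embeddings $\sigma\colon K\hookrightarrow\overline{\IQ}_p$, which partition into groups of size $e(\p)f(\p)$ indexed by the primes $\p\mid p$ of $K$. Since $e(\p_2)=f(\p_2)=1$, exactly one embedding $\sigma_2$ corresponds to $\p_2$, and it satisfies $v_p(\sigma_2(\alpha))=v_{\p_2}(\alpha)/e(\p_2)=-\ell$. Every other embedding $\sigma$ corresponds to a prime $\p\neq\p_2$ above $p$, where $v_\p(\alpha)\geq 0$ since the divisor of $\alpha$ is supported only on $\p_1$ and $\p_2$; consequently $v_p(\sigma(\alpha))=v_\p(\alpha)/e(\p)\geq 0$. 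Thus $\sigma_2(\alpha)$ is the unique Galois conjugate of $\alpha$ with strictly negative $p$-adic valuation.

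From this I would deduce $K=\IQ(\alpha)$ by comparing two descriptions of the characteristic polynomial $\chi_\alpha(x)=\prod_\sigma(x-\sigma(\alpha))\in\IQ[x]$ of multiplication-by-$\alpha$ on $K$. By the previous paragraph, $\sigma(\alpha)=\sigma_2(\alpha)$ forces $\sigma=\sigma_2$, so $\sigma_2(\alpha)$ appears with multiplicity exactly one in this product. On the other hand $\chi_\alpha=m_\alpha^{d/[\IQ(\alpha):\IQ]}$ where $m_\alpha$ is the minimal polynomial of $\alpha$, so every root occurs with multiplicity $d/[\IQ(\alpha):\IQ]$; comparison forces $K=\IQ(\alpha)$. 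The delicate point is to ensure this works when $\p_1,\p_2$ lie over the same rational prime (so $p=q$): then $|N_{K/\IQ}(\alpha)|=1$, so the cruder argument via $|N_{K/\IQ}(\alpha)|=|N_{\IQ(\alpha)/\IQ}(\alpha)|^{[K:\IQ(\alpha)]}$ degenerates. The multiplicity approach is insensitive to this, since it uses only the valuation at $\p_2$.

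For the shape of $D_\alpha$, write $m_\alpha(x)=x^d+b_{d-1}x^{d-1}+\cdots+b_0\in\IQ[x]$; each $b_i$ is, up to sign, an elementary symmetric function in the conjugates $\sigma(\alpha)$. For any rational prime $r\neq p$, every prime $\p\mid r$ of $K$ satisfies $v_\p(\alpha)\geq 0$ (only $\p_2$ has negative $v_\p$, and $\p_2\mid p\neq r$), so $v_r(\sigma(\alpha))\geq 0$ for all $\sigma$, hence $v_r(b_i)\geq 0$. For $r=p$ only $\sigma_2(\alpha)$ contributes negative valuation $-\ell$, so $v_p(b_i)\geq -\ell$; moreover $v_p(b_{d-1})=-\ell$ exactly, since in $b_{d-1}=-\sum_\sigma\sigma(\alpha)$ the summand $\sigma_2(\alpha)$ strictly dominates all the others. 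Finally $b_0=(-1)^dN_{K/\IQ}(\alpha)=\pm N(\p_1)^\ell/N(\p_2)^\ell=\pm q^\ell/p^\ell$. Assembling these facts, $p^\ell m_\alpha(x)\in\IZ[x]$ has leading coefficient $p^\ell$, constant term $\pm q^\ell$, and coefficient of $x^{d-1}$ coprime to $p$; since $p^\ell$ is coprime to every prime $r\neq p$, the polynomial is primitive and therefore equals $D_\alpha(x)$, matching the asserted form.
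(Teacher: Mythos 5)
Your proof is correct, and both parts take a genuinely different route from the paper's argument.

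For $K=\IQ(\alpha)$, the paper assumes $F=\IQ(\alpha)\subsetneq K$ and works with $\p_1$: since $e(\p_1/\qqq_1)=f(\p_1/\qqq_1)=1$ where $\qqq_1=\p_1\cap\Oseen_F$, the fundamental identity for $K/F$ forces another prime $\p_1'$ of $\Oseen_K$ over $\qqq_1$, at which $v_{\p_1'}(\alpha)=e(\p_1'/\qqq_1)\,v_{\p_1}(\alpha)>0$, contradicting that $\p_1$ is the only prime of positive valuation. You instead work with $\p_2$ and the embeddings into $\Qbar_p$: $\sigma_2(\alpha)$ is the unique conjugate with negative $p$-adic valuation, so it occurs with multiplicity one in $\chi_\alpha=m_\alpha^{[K:\IQ(\alpha)]}$, forcing the exponent to be one. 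Both arguments hinge on the same structural input (that $\alpha$ has a pole or zero at a prime with $e=f=1$), but yours is phrased through multiplicities in the characteristic polynomial rather than through ramification of $K/F$; this is a legitimate alternative, and your remark that the argument is insensitive to whether $p=q$ is also true of the paper's version, since $\p_1'\ne\p_1$ already gives $v_{\p_1'}(\alpha)\le 0$ regardless of whether $\p_1'=\p_2$. For the shape of $D_\alpha$, the paper simply cites the formula $a_0=\prod_{v\nmid\infty}\max\{1,|\alpha|_v\}^{d_v}$ for the leading coefficient of the minimal $\IZ$-polynomial (a consequence of Gauss's lemma), from which both end coefficients drop out. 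You instead track $r$-adic valuations of the coefficients of $m_\alpha$ for every rational prime $r$, use that the $x^{d-1}$-coefficient has $v_p$ exactly $-\ell$, and conclude primitivity of $p^\ell m_\alpha$ by hand. This is more self-contained and makes the Gauss-lemma content explicit, at the cost of being longer; the end result is the same.
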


\begin{proof}
First, suppose $\IQ(\alpha)=F\subsetneqq K$. Let $\qqq_1$ be the prime ideal of $\Oseen_F$ below $\ppp_1$. Then $e(\ppp_1/\qqq_1)=f(\ppp_1/\qqq_1)=1$. Hence, as $[K:F]>1$, there must be another prime ideal $\ppp_1'$ of $\Oseen_K$ above $\qqq_1$. For the corresponding discrete valuations, we get $v_{\ppp_1'}(\alpha)=e(\ppp_1'/\qqq_1)v_{\qqq_1}(\alpha)=e(\ppp_1'/\qqq_1)v_{\ppp_1}(\alpha)=e(\ppp_1'/\qqq_1)\ell>0$. But there is no other prime ideal of $\Oseen_K$ at which $\alpha$ has positive valuation. Hence, $\IQ(\alpha)=K$. 
The second assertion follows immediately from the well-known formula 
$$a_0=\prod_{v\nmid \infty}\max\{1,|\alpha|_v\}^{d_v},$$ 
where $a_0$ is the leading coefficient of $D_\alpha$ and the product runs over all non-Archimedean places of $\IQ(\alpha)$. The latter formula in turn is  essentially 
a consequence of Gau\ss' Lemma
applied to $D_\alpha$ and each non-Archimedean place of the  splitting field of $D_\alpha$. 
\end{proof}

\begin{lemma}\label{countingprop}
Suppose $S\subset S_{\IQ,d}$, and $\theta=d-1+2/\ell$. Then
\begin{alignat*}1
\Ndel(S,X)\ll_{d} X^{\theta}.
\end{alignat*}
\end{lemma}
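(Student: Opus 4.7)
The plan is to exploit Lemma \ref{lem:alpha_prop} to realize each field $K$ with $\eta_{\ell}(K) < X$ as $\IQ(\alpha)$ for some $\alpha$ whose minimal polynomial has the very rigid shape \eqref{minpoly}, and then to count such polynomials using a standard height--coefficient inequality.

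First I would fix $K \in S$ with $\eta_{\ell}(K) < X$. Since $\eta_{\ell}(K)$ is an infimum strictly less than $X$, there exists $\alpha \in K$ of the special form (that is, $\alpha \Oseen_K = (\ppp_1 \ppp_2^{-1})^\ell$ with $\ppp_1 \neq \ppp_2$ prime ideals of $\Oseen_K$ satisfying $e(\ppp_i)=f(\ppp_i)=1$) with $H_K(\alpha) < X$. By Lemma \ref{lem:alpha_prop}, $K = \IQ(\alpha)$, so $\alpha$ has degree exactly $d$ and
$$D_\alpha(x) = p^\ell x^d + a_1 x^{d-1} + \cdots + a_{d-1} x \pm q^\ell$$
for some rational primes $p, q$ and integers $a_1, \ldots, a_{d-1}$.

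Next I would bound each coefficient of $D_\alpha$ in terms of $H_K(\alpha)$ via the Mahler measure. Since $[K:\IQ]=d$ and $K=\IQ(\alpha)$, the Mahler measure satisfies $M(\alpha) = H_K(\alpha) < X$, and the classical estimate $|a_i| \leq \binom{d}{i} M(\alpha)$ gives $|a_i| \ll_d X$ for every coefficient of $D_\alpha$. In particular, $p^\ell \leq X$ and $q^\ell \leq X$, so $p$ and $q$ each range over $\ll X^{1/\ell}$ primes (crudely discarding the saving from the prime number theorem), while each middle coefficient $a_i$ $(1\leq i\leq d-1)$ ranges over $O_d(X)$ integers. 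Including the sign of the constant term, the total count of admissible polynomials is therefore $\ll_d X^{2/\ell} \cdot X^{d-1} = X^{d-1+2/\ell} = X^{\theta}$.

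Finally, I would observe that a degree-$d$ polynomial has at most $d$ roots in $\Qbar$ and hence determines at most $d$ subfields $\IQ(\alpha) \subset \Qbar$ of degree $d$. This multiplicity is absorbed into the implied constant, yielding $\Ndel(S,X) \ll_d X^\theta$. There is no substantial obstacle in this argument: it is essentially a direct consequence of the rigid polynomial shape provided by Lemma \ref{lem:alpha_prop} together with the elementary inequality between the coefficients of a polynomial and its Mahler measure.
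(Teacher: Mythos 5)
Your proof is correct and takes essentially the same approach as the paper: both use Lemma~\ref{lem:alpha_prop} to pass to minimal polynomials of the rigid shape~\eqref{minpoly}, bound the coefficients by $O_d(X)$ via the height (the paper cites the bound $2^dH_{\IQ(\alpha)}(\alpha)$ on the max norm, you invoke the equivalent Mahler measure inequality $|a_i|\leq\binom{d}{i}M(\alpha)$), and then count the $\ll_d X^{2/\ell}\cdot X^{d-1}$ admissible coefficient vectors. Your explicit remark that each polynomial determines at most $d$ conjugate subfields of $\Qbar$ makes precise a multiplicity that the paper absorbs into its implied constant.
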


\begin{proof}
  Let $P_S$ be the set of all $\alpha\in\Qbar$ such that $\IQ(\alpha)\in S$ and $\alpha\Oseen_{\IQ(\alpha)}=(\ppp_1\ppp_2^{-1})^{\ell}$, for prime ideals $\ppp_1\neq\ppp_2$ of $\Oseen_{\IQ(\alpha)}$ with $e(\ppp_i)=f(\ppp_i)=1$ for $i=1,2$. Moreover, let
\begin{equation*}
  N_H(P_S,X):=\#\{\alpha\in P_S\: ;\: H_{\IQ(\alpha)}(\alpha)\leq X\}.
\end{equation*}
Using Lemma \ref{lem:alpha_prop}, we observe that the image of the map $\alpha\rightarrow \IQ(\alpha)$ with domain 
$$\{\alpha\in P_S\: ;\: H_{\IQ(\alpha)}(\alpha)\leq X\}$$ 
covers the set
$$\{K\in S\: ;\: \del\leq X\}.$$
Hence, we get
 \begin{alignat*}1
\Ndel(S,X)\leq N_H(P_S,X).
\end{alignat*}
Now if $\alpha\in P_S$ then, as noted in (\ref{minpoly}), the first and last coefficient of its minimal polynomial $D_\alpha$ are, up to sign, $\ell-th$ prime powers. For $\alpha$ to be counted in $N_H(P_S,X)$, we also require $H_{\IQ(\alpha)}(\alpha)\leq X$. 
Now the maximum norm of the coefficient vector of $D_\alpha$ is bounded from above by
$2^dH_{\IQ(\alpha)}(\alpha)$, and hence by $2^{d}X$.
Thus, we have at most 
$\ll_d X^{d-1+2/\ell}$ possibilities for these minimal polynomials and thus for $\alpha$.
\end{proof}

The bound on $\Ndel(S,X)$ from Lemma \ref{countingprop} suffices to deduce Theorems \ref{thm2},
\ref{thm3}, \ref{thm:GRH} and \ref{thm5}. Our other theorems involve families of number
fields with specified Galois groups $G\subsetneq S_d$. To compensate for the
relative thinness of these families, we need to show that families of
polynomials of degree $d$ with specified Galois group $G\subsetneq S_d$ are
also thin. This was done by Dietmann in \cite{MR2891158}, but
his results are not applicable to our situation as they concern monic
polynomials with no further restrictions on their coefficients, whereas we have
to deal with polynomials of the shape \eqref{minpoly}.

The idea of Dietmann's proof, to detect polynomials with Galois group $G$ through roots of appropriate
resolvents, and to control these roots via uniform bounds for integral points
on affine surfaces, applies to our situation as well. The following results,
culminating in Proposition \ref{prop:sub_G} below, modify and refine Dietmann's
proofs accordingly. Hence, we keep our notation similar to that of
\cite{MR2891158}. In particular, we will write $n$ instead of $d$ for the
degree of our polynomials.

For any field $K$ of characteristic $0$ and $n\in\IN$, we consider polynomials
\begin{equation*}
  f=x^n+a_{1}x^{n-1}+\cdots+a_n\in K[x]
\end{equation*}
with distinct roots $\alpha_1,\ldots,\alpha_n$ in an algebraic closure of $K$. Let $G\subset S_n$ be a subgroup, then the Galois resolvent from \cite[Lemma 5]{MR2891158} is defined as
\begin{equation}\label{eq:galois_resolvent}
  \phi(z; a_1,\ldots,a_n) = \prod_{\sigma\in S_n/G}\left(z-\sum_{\tau\in G}\alpha_{\sigma(\tau(1))}\alpha_{\sigma(\tau(2))}^2\cdots\alpha_{\sigma(\tau(n))}^{n}\right).
\end{equation}
It is a polynomial in $z,a_1,\ldots,a_n$ with integer coefficients that do not depend on $K$. It is monic in $z$ of degree $\#(S_n/G)$. It has a root $z\in K$ whenever the Galois group of $f$, as a subgroup of $S_n$ acting on $\alpha_1,\ldots,\alpha_n$, is contained in $G$. In case $K=\IQ$ and $a_1,\ldots,a_n\in\IZ$, this root must clearly lie in $\IZ$. Moreover, we denote by $\Delta_\phi(a_1,\ldots,a_n)\in K$ the discriminant of $\phi(z;a_1,\ldots,a_n)\in K[z]$. Again, this discriminant is a polynomial in $a_1,\ldots,a_n$ with integer coefficients independent of $K$.

\begin{lemma}\label{lem:discriminant_non_zero}
  Fix $a_n\in\IQ$, $a_n\neq 0$. Then $\Delta_\phi(a_1,\ldots,a_{n-1},a_n)$ is not identically zero as a polynomial in $a_1,\ldots,a_{n-1}$.
\end{lemma}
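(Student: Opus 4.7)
The plan is to exhibit a specific tuple $(a_1, \ldots, a_{n-1})$ at which the polynomial $\Delta_\phi(a_1, \ldots, a_{n-1}, a_n)$ does not vanish; the most economical choice is the universal one. I would treat $a_1, \ldots, a_{n-1}$ as algebraically independent indeterminates over $\IQ$, set $F := \IQ(a_1, \ldots, a_{n-1})$, and write $c := (-1)^n a_n \neq 0$. Since $X_1 \cdots X_n - c$ is irreducible in $\IQ[X_1, \ldots, X_n]$, the quotient $R := \IQ[X_1, \ldots, X_n]/(X_1 \cdots X_n - c)$ is an integral domain; let $L$ be its fraction field and $\alpha_i \in L$ the image of $X_i$. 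Then $\alpha_1 \cdots \alpha_n = c$, the elementary symmetric polynomials $e_i(\alpha_1,\ldots,\alpha_n)$ for $i < n$ give a copy of $F$ inside $L$, and $L/F$ is the splitting field of $f(x) = \prod_i(x-\alpha_i) = x^n + a_1 x^{n-1} + \cdots + a_{n-1} x + a_n$. Inside $L$, the $[S_n:G]$ roots of $\phi(z;a_1,\ldots,a_{n-1},a_n)$ are the elements
\[
  \beta_\sigma := \sum_{\tau \in G} \alpha_{\sigma\tau(1)} \alpha_{\sigma\tau(2)}^2 \cdots \alpha_{\sigma\tau(n)}^n, \qquad \sigma G \in S_n/G,
\]
and the task reduces to showing that these are pairwise distinct in $L$.

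A first, purely combinatorial observation is that, viewed in $\IQ[X_1,\ldots,X_n]$, the $\beta_\sigma$ have pairwise disjoint monomial supports. Indeed, the monomial $X_{\pi(1)} X_{\pi(2)}^2 \cdots X_{\pi(n)}^n$ has exponent tuple $(\pi^{-1}(1),\ldots,\pi^{-1}(n))$, a permutation of $(1,\ldots,n)$ that uniquely determines $\pi$, and the monomials appearing in $\beta_\sigma$ are precisely those indexed by the coset $\sigma G$. Hence for $\sigma G \neq \sigma' G$ the difference $\beta_\sigma - \beta_{\sigma'}$ is a nonzero homogeneous polynomial of degree $N := n(n+1)/2$ in $\IQ[X_1,\ldots,X_n]$. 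The main obstacle is to upgrade this to nonvanishing in the quotient $R$, i.e., to rule out $\beta_\sigma - \beta_{\sigma'} \in (X_1 \cdots X_n - c)$; here the homogeneity of the left-hand side clashes with the inhomogeneous term $-c$ on the right and suggests a degree-matching argument.

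Carrying this out, I would suppose for contradiction that $\beta_\sigma - \beta_{\sigma'} = (X_1 \cdots X_n - c)\,Q$ for some $Q \in \IQ[X_1,\ldots,X_n]$ and decompose $Q = \sum_d Q_d$ into homogeneous parts. Matching degree-$m$ components yields $X_1 \cdots X_n \cdot Q_{m-n} - c\, Q_m = 0$ for $m \neq N$ and $X_1 \cdots X_n \cdot Q_{N-n} - c\, Q_N = \beta_\sigma - \beta_{\sigma'}$. Since $c \neq 0$, the first identity gives the recursion $Q_m = c^{-1} X_1 \cdots X_n \cdot Q_{m-n}$, and combined with $Q_m = 0$ for $m < 0$ an easy induction forces $Q_m = 0$ for every $0 \leq m < N$. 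The degree-$N$ equation then gives $Q_N = -c^{-1}(\beta_\sigma - \beta_{\sigma'}) \neq 0$, and iterating the recursion produces $Q_{N+kn} = (-c)^{-(k+1)}(X_1 \cdots X_n)^k(\beta_\sigma - \beta_{\sigma'}) \neq 0$ for every $k \geq 0$, contradicting that $Q$ is a polynomial. Hence the $\beta_\sigma$ are pairwise distinct in $L$, so $\Delta_\phi$ is nonzero in $L$, and as $\Delta_\phi(a_1,\ldots,a_{n-1},a_n)$ lies in $\IQ[a_1,\ldots,a_{n-1}]$ this forces it to be a nonzero polynomial in the indeterminates $a_1,\ldots,a_{n-1}$, as required.
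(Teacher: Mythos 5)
Your proof is correct, but it takes a genuinely different route from the paper's. Both proofs rest on the same combinatorial observation — that the monomial supports of the expressions $\beta_\sigma$, $\sigma G\in S_n/G$, are pairwise disjoint, so the differences $\beta_\sigma-\beta_{\sigma'}$ are nonzero homogeneous polynomials of degree $n(n+1)/2$ in $\IZ[\alpha_1,\ldots,\alpha_n]$ — but they diverge in how they use this. The paper stays over $\IC$ and argues geometrically: the Zariski-open subset of $\mathbb{P}^{n-1}$ where all the $\beta_\sigma$ are distinct is nonempty, and since the $\beta_\sigma$ are homogeneous one can rescale an affine representative to enforce $(-1)^n\alpha_1\cdots\alpha_n=a_n$ without destroying distinctness. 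You instead work universally: you pass to the fraction field $L$ of the domain $\IQ[X_1,\ldots,X_n]/(X_1\cdots X_n - c)$, observe that the elementary symmetric functions $e_1,\ldots,e_{n-1}$ are algebraically independent there (so $\Delta_\phi\neq 0$ in $L$ forces the polynomial $\Delta_\phi$ itself to be nonzero), and then rule out $\beta_\sigma-\beta_{\sigma'}\in(X_1\cdots X_n-c)$ by a graded degree-matching argument exploiting the inhomogeneity of $X_1\cdots X_n-c$ when $c\neq 0$. The paper's route is shorter and more geometric; yours is purely algebraic, avoids any appeal to $\IC$ or the Zariski topology, and isolates neatly what $a_n\neq 0$ is really doing (it makes $X_1\cdots X_n-c$ inhomogeneous, so it cannot divide a nonzero homogeneous polynomial). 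One small slip: in the iterated recursion the sign should be $Q_{N+kn}=(-1)\,c^{-(k+1)}(X_1\cdots X_n)^k(\beta_\sigma-\beta_{\sigma'})$ rather than $(-c)^{-(k+1)}(\cdots)$; this has no effect on the conclusion that $Q_{N+kn}\neq 0$ for all $k$.
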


\begin{proof}
  This is a refinement of \cite[Lemma 7]{MR2891158}. Fix $a_n\neq 0$. Then it is enough to find $a_1,\ldots,a_{n-1}\in\IC$ such that $\Delta_\phi(a_1,\ldots,a_n)\neq 0$. For any choice of $a_1,\ldots,a_{n-1}$, it is clear from \eqref{eq:galois_resolvent} that the roots of $\phi(z;a_1,\ldots,a_n)$ are the complex numbers
  \begin{equation}\label{eq:disc_zeros}
    \sum_{\tau\in G}\alpha_{\sigma(\tau(1))}\alpha_{\sigma(\tau(2))}^2\cdots\alpha_{\sigma(\tau(n))}^{n},
  \end{equation}
where $\sigma$ ranges over a set of representatives for the cosets in $S_n/G$. All $\#(S_n/G)$ of these expressions are distinct homogeneous polynomials of degree $n(n+1)/2$ in $\IZ[\alpha_1,\ldots,\alpha_n]$. Hence, there is a non-empty Zariski-open subset of points $(\alpha_1:\cdots:\alpha_n)\in\mathbb{P}^{n-1}$ for which all the expressions in \eqref{eq:disc_zeros} are distinct. In particular, we find such $(\alpha_1:\cdots:\alpha_n)$ whose homogeneous coordinates $\alpha_i\in\IC$ are all distinct and non-zero. Picking a correctly scaled representative of such a point, we get $\alpha_1,\ldots,\alpha_n\in\IC$ that satisfy all of the previous conditions and moreover that $(-1)^n\alpha_1\cdots\alpha_n=a_n$. Let $a_1,\ldots,a_{n-1}\in\IC$ be the other coefficients of the polynomial $\prod_{i=1}^n(x-\alpha_i)$. Then, by our choice of $\alpha_1,\ldots,\alpha_n$, all zeros of $\phi(z;a_1,\ldots,a_n)$ are distinct, and hence its discriminant satisfies $\Delta_\phi(a_1,\ldots,a_n)\neq 0$.
\end{proof}

\begin{lemma}\label{lem:full_galois_Qt}
  Let $n\geq 3$ and $a_2,\ldots,a_{n-2},a_n\in\IZ$ such that $a_n\neq 0$. Then the polynomial 
  \begin{equation*}
    x^n+a_1x^{n-1}+\cdots+a_{n-2}x^2+tx+a_n\in\IQ(t)[x]
  \end{equation*}
  has, for all but  $\ll_n 1$ values of $a_1\in\IZ$, the full symmetric group $S_n$ as Galois group acting on its roots in an algebraic closure of the rational function field $\IQ(t)$.
\end{lemma}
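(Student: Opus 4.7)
The plan is to combine the Galois resolvent \eqref{eq:galois_resolvent} with a ramification analysis of $f$ at the infinite place of $\mathbb{Q}(t)$.

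First I would check that $f(x;a_1,t)=x^n+a_1x^{n-1}+\cdots+a_{n-2}x^2+tx+a_n$ is irreducible over $\mathbb{Q}(t)$ for every $a_1\in\mathbb{Z}$. The Newton polygon of $f$ at the $t^{-1}$-adic valuation has slopes $-1$ (of horizontal length $1$) and $1/(n-1)$ (of horizontal length $n-1$), with $\gcd(1,n-1)=1$, so $f$ factors over the completion $\mathbb{Q}((t^{-1}))$ as a linear factor times an irreducible, totally tamely ramified factor of degree $n-1$. A $\mathbb{Q}(t)$-factorisation of $f$ would have to produce a linear $\mathbb{Q}(t)$-factor; by Gauss such a root of $f$ in $\mathbb{Q}(t)$ would lie in $\mathbb{Z}[t]$, a $t$-degree comparison in $f(r(t))=0$ would force $r\in\mathbb{Z}$, and matching the coefficients of $t^0$ and $t^1$ in the resulting identity would force $r=0$ and hence $a_n=0$, contradicting $a_n\neq 0$. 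Hence $\Gal(f/\mathbb{Q}(t))$ is a transitive subgroup of $S_n$, and it suffices to show for each of the $\ll_n 1$ conjugacy classes of maximal proper subgroups $G\subsetneq S_n$ that only $\ll_n 1$ values of $a_1$ satisfy $\Gal(f/\mathbb{Q}(t))\subseteq G$.

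By the defining property of the resolvent, such $a_1$ are exactly those for which the monic polynomial $\phi_G(z;a_1,a_2,\ldots,a_{n-2},t,a_n)\in\mathbb{Z}[t][z]$ admits a root in $\mathbb{Z}[t]$ (via Gauss once more). A further Newton polygon argument, now applied to $\phi_G$ in the variable $z$ at the $t^{-1}$-adic valuation, yields $\deg_t z_0\leq D_0=O_n(1)$ for any such root $z_0$. Writing $z_0(t)=\sum_{i=0}^{D_0}c_i t^i$ with $c_i\in\mathbb{Z}$, the identity $\phi_G(z_0(t);a_1,t)\equiv 0\in\mathbb{Z}[t]$ becomes a system of polynomial equations of total degree $\ll_n 1$ on the tuple $(a_1,c_0,\ldots,c_{D_0})\in\mathbb{Z}^{D_0+2}$; provided this system cuts out a zero-dimensional affine variety, Bezout bounds its integer points, and hence the number of valid $a_1$, by $\ll_n 1$.

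The main obstacle is to justify the zero-dimensionality, which amounts to showing that $\phi_G(z;A_1,T)$ admits no root in $\mathbb{Q}(A_1,T)$, or equivalently that $\Gal(f/\mathbb{Q}(A_1,T))=S_n$ for the fixed $a_2,\ldots,a_{n-2},a_n$. I would prove this by invoking once more the Newton polygon of $f$ at $T=\infty$: its shape shows that the inertia subgroup at infinity inside $\Gal(f/\mathbb{Q}(A_1,T))$ is cyclic of order $n-1$ and realises an $(n-1)$-cycle in $S_n$, so the generic Galois group is transitive and contains a long cycle, hence $2$-transitive. Classical results on transitive subgroups of $S_n$ containing an $(n-1)$-cycle---with the finitely many sporadic low-$n$ alternatives handled case-by-case via the explicit form of $\phi_G$---then pin it down to $S_n$ or $A_n$; the $A_n$ alternative is ruled out by a discriminant computation, since $\operatorname{disc}(f)\in\mathbb{Z}[A_1,T]$ has $t$-degree $n$ with an explicitly computable leading coefficient that fails to be a square in $\mathbb{Q}(A_1)(t)$ (by parity of the $t$-degree for $n$ odd, and by direct analysis of the leading coefficient for $n$ even).
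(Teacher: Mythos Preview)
The paper's proof is a two-line citation: it invokes \cite[Satz~1]{MR0276203} for the qualitative statement and notes (via \cite{MR2891158,MR0302623}) that the argument there yields the explicit bound $n^2$ on the exceptional $a_1$. Your route is a direct attack, and the irreducibility step via the Newton polygon at $t=\infty$ is fine, but the core of your proposal---deducing $\Gal(f/\IQ(A_1,T))=S_n$ from the $(n-1)$-cycle in inertia---has a genuine gap.

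The group-theoretic claim is simply false: a $2$-transitive subgroup of $S_n$ containing an $(n-1)$-cycle need not be $A_n$ or $S_n$, and the exceptions are not confined to small $n$. For every prime power $n\geq 5$, the affine group $AGL_1(n)$ acting on $\IF_n$ is sharply $2$-transitive and $x\mapsto gx$ (with $g$ a primitive root) is an $(n-1)$-cycle; for every prime $p\geq 5$, the group $PGL_2(p)$ on $\mathbb{P}^1(\IF_p)$ is $3$-transitive and any unipotent element is a $p$-cycle on the $p+1$ points. Thus inertia at $t=\infty$ alone cannot force $S_n$; one needs further ramification information---typically a transposition arising from a simple zero of $\mathrm{disc}_x(f)$ in $T$---and establishing this uniformly for \emph{arbitrary} fixed $a_2,\ldots,a_{n-2},a_n$ with $a_n\neq 0$ is exactly the non-trivial content of the result the paper cites. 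There is also a secondary imprecision: your zero-dimensionality reduction (and the subsequent Bezout bound) really requires the resolvent to have no root in $\overline{\IQ(A_1)}(T)$, i.e.\ $\Gal(f/\overline{\IQ(A_1)}(T))=S_n$, not merely over $\IQ(A_1,T)$. In particular your ``leading coefficient'' argument for excluding $A_n$ when $n$ is even collapses over $\overline{\IQ(A_1)}$, since that leading $T$-coefficient is a rational constant and hence automatically a square there.
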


\begin{proof}
  This is similar to \cite[Lemma 2]{MR2891158}. By \cite[Satz 1]{MR0276203}, the Galois group is $S_n$ for all but finitely many values of $a_1\in\IZ$. As described in \cite[Lemma 2]{MR2891158} and the introduction of \cite{MR0302623}, the proof of \cite[Satz 1]{MR0276203} provides the upper bound $n^2$ for the number of excluded values of $a_1$.
\end{proof}

\begin{lemma}\label{lem:resolvent_irreducible}
  Let $n\geq 2$ and $a_1,\ldots,a_{n-2},a_n\in\IZ$ such that the polynomial  
  \begin{equation*}
    x^n+a_1x^{n-1}+\cdots+a_{n-2}x^2+tx+a_n\in\IQ(t)[x]
  \end{equation*}
  has Galois group $S_n$ over the rational function field $\IQ(t)$. Moreover, suppose that
  \begin{equation}\label{eq:discr_condition}
    \Delta_\phi(a_1,\ldots,a_{n-2},t,a_n)\neq 0 \text{ in } \IQ(t).
  \end{equation}
Then the polynomial $\phi(z;t)=\phi(z; a_1,\ldots,a_{n-2},t,a_n)\in\IQ[z,t]$ is irreducible over $\IQ$.
\end{lemma}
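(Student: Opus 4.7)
\emph{Proof plan.} The plan is to identify the roots of $\phi(z;t)$ in an algebraic closure of $\IQ(t)$, show that $\Gal(f/\IQ(t))=S_n$ acts transitively on them, conclude that $\phi(z;t)$ is irreducible over $\IQ(t)$, and then invoke Gauss's lemma to promote this to irreducibility over $\IQ[z,t]$.

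First, I fix an algebraic closure of $\IQ(t)$ and let $\alpha_1,\ldots,\alpha_n$ be the roots of $f(x)=x^n+a_1x^{n-1}+\cdots+a_{n-2}x^2+tx+a_n$ there. By the definition \eqref{eq:galois_resolvent}, the roots of $\phi(z;t)$ are
\[
r_{\sigma G}:=\sum_{\tau\in G}\alpha_{\sigma(\tau(1))}\alpha_{\sigma(\tau(2))}^2\cdots\alpha_{\sigma(\tau(n))}^{n}\qquad(\sigma G\in S_n/G),
\]
and these depend only on the coset $\sigma G$, since changing the representative $\sigma\mapsto \sigma\tau_0$ merely reindexes the sum over $\tau\in G$. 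The hypothesis \eqref{eq:discr_condition} of non-vanishing of the discriminant in $\IQ(t)$ says precisely that these $\#(S_n/G)$ roots are pairwise distinct.

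Next, an element $\pi\in S_n=\Gal(\IQ(t,\alpha_1,\ldots,\alpha_n)/\IQ(t))$ acts via $\pi\cdot\alpha_i=\alpha_{\pi(i)}$, and a direct calculation from the above formula yields $\pi\cdot r_{\sigma G}=r_{(\pi\sigma)G}$. Hence the Galois action on the set of roots of $\phi(z;t)$ factors through the left multiplication action of $S_n$ on $S_n/G$, which is transitive. Combined with the distinctness established above, this transitivity implies that $S_n$ acts transitively on the roots of $\phi(z;t)$, whence $\phi(z;t)$ is irreducible as a polynomial in $z$ over $\IQ(t)$.

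Finally, by the remark following \eqref{eq:galois_resolvent} the polynomial $\phi(z;t)$ has integer coefficients, and it is monic of degree $\#(S_n/G)$ in the variable $z$; viewed as an element of $\IZ[t][z]$, it is therefore primitive. Gauss's lemma then lifts irreducibility over $\IQ(t)[z]$ to irreducibility over $\IZ[t][z]$, and hence over $\IQ[z,t]=\IQ[t][z]$. There is no serious obstacle here: once the formula $\pi\cdot r_{\sigma G}=r_{(\pi\sigma)G}$ is verified, the hypothesis $\Gal(f/\IQ(t))=S_n$ provides transitivity, the hypothesis $\Delta_\phi\neq 0$ provides distinctness, and Gauss's lemma carries out the passage from $\IQ(t)[z]$ to $\IQ[z,t]$.
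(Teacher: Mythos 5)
Your proof is correct and coincides with the standard argument that the paper invokes by reference: the paper simply appeals to Dietmann's Lemma~6 (with the remark that swapping the roles of the linear and constant coefficients does not affect the proof), and Dietmann's proof is exactly the argument you give, namely that the Galois group permutes the distinct roots $r_{\sigma G}$ by left translation on $S_n/G$, hence transitively, giving irreducibility over $\IQ(t)$, after which Gauss's lemma lifts this to $\IQ[z,t]$.
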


\begin{proof}
  Note that \eqref{eq:discr_condition} states that the roots of $\phi(z;t)$ in an algebraic closure of $\IQ(t)$ are all distinct. Hence, we are precisely in the situation of \cite[Lemma 6]{MR2891158}, except that we use the variable $t$ for the linear coefficient, whereas Dietmann uses $t$ for the constant coefficient. The proof of \cite[Lemma 6]{MR2891158} is agnostic of this difference and works verbatim in our case.
\end{proof}

The following result is \cite[Lemma 8]{MR2891158}, which follows from \cite[Theorem 1]{zbMATH02221889}.

\begin{lemma}\label{lem:BHB}
  Let $F\in\IZ[x_1,x_2]$ be of degree $d$ and irreducible over $\IQ$. Let $P_1,P_2\geq 1$, and
  \begin{equation*}
  T=\max_{(e_1,e_2)}\{P_1^{e_1}P_2^{e_2}\},
\end{equation*}
  where $(e_1,e_2)$ runs through all pairs for which the monomial $x_1^{e_1}x_2^{e_2}$ appears in $F$ with non-zero coefficient. Then, for $\varepsilon>0$, 
  \begin{equation*}
    \#\{\vx\in\IZ^2\: ;\: F(\vx)=0\text{ and }|x_i|\leq P_i \text{ for }i=1,2\} \ll_{d,\varepsilon}\max\{P_1,P_2\}^{\varepsilon}\exp\left(\frac{\log P_1 \log P_2}{\log T}\right).
  \end{equation*}
\end{lemma}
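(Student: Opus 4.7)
The plan is to deduce this from Heath-Brown's determinant method as packaged in \cite[Theorem 1]{zbMATH02221889}. The first step is to recognise that the quantity $T = \max_{(e_1,e_2)} P_1^{e_1} P_2^{e_2}$ is precisely the natural weighted height of $F$ against the asymmetric box $[-P_1,P_1]\times[-P_2,P_2]$: after the rescaling $x_i = P_i u_i$, a monomial $x_1^{e_1} x_2^{e_2}$ appearing in $F$ becomes $P_1^{e_1} P_2^{e_2} u_1^{e_1} u_2^{e_2}$, so $T$ is the largest coefficient of the rescaled polynomial on the unit square.

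With this normalisation in place, I would apply the determinant method directly. For each prime $p$ in an appropriate range, one forms the matrix whose rows are indexed by the integer zeros of $F$ inside a small $p$-adic cluster and whose columns are indexed by monomials $x_1^{a_1} x_2^{a_2}$ of weighted degree $a_1 \log P_1 + a_2 \log P_2 \leq W$. A standard $p$-adic valuation estimate forces a maximal minor of this matrix to vanish modulo a large power of $p$, and hence produces an auxiliary polynomial $G$ of weighted degree at most $W$ that vanishes on every point of the cluster. Since $F$ is irreducible over $\IQ$ and $G$ has bounded total degree once $W$ is chosen appropriately, $F$ and $G$ share no component, so Bezout's theorem bounds their common integer zeros by $\ll_d 1$.

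The optimisation step balances the count of monomials of weighted degree $\leq W$ (which is $\asymp W^2/(2 \log P_1 \log P_2)$) against the weighted size $\log T$ of $F$ needed to force vanishing; the optimal choice of $W$ produces $\ll_{d,\varepsilon} \max\{P_1,P_2\}^{\varepsilon} \exp(\log P_1 \log P_2 / \log T)$ clusters, and summing the Bezout contribution over clusters yields the claimed bound. The main obstacle is ensuring that the asymmetric weighting of the Newton polygon produces exactly the constant $T$ in the stated form, with uniformity in $d$; this is precisely the content of the cited theorem, so in practice my plan collapses to invoking \cite[Theorem 1]{zbMATH02221889} after the initial rescaling observation.
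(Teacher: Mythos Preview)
Your approach is essentially the same as the paper's: the paper does not give an independent proof but simply records that this is \cite[Lemma 8]{MR2891158}, which in turn follows from \cite[Theorem 1]{zbMATH02221889}. Your sketch of the determinant method is a reasonable gloss on what underlies that citation, and your final sentence correctly identifies that the argument collapses to invoking Heath-Brown's theorem directly.
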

Note that the implicit constant depends only on the degree, but not on the values of the coefficients of $F$. This is crucial for our application.

\begin{proposition}\label{prop:sub_G}
  Let $n\geq 2$, $G$ a transitive subgroup of $S_n$ and $\ell\in\IN$. For $B\geq 2$, let $N_{n,G}(B)$ be the number of polynomials $f=a_0x^n+a_1x^{n-1}+\cdots+a_{n-1}x+a_n$ such that
  \begin{enumerate}
  \item $a_0,\ldots,a_{n}\in\IZ\cap [-B,B]$,
  \item $a_0,a_n$ are $\ell$-th powers in $\IZ\smallsetminus\{0\}$,
  \item $f$ is irreducible over $\IQ$,
  \item the Galois group of $f$ acts on the roots of $f$ (enumerated in a fixed order) as $G$.
    \end{enumerate}
  Then, for $\varepsilon>0$, we have the upper bound 
  \begin{equation}\label{eq:sub_G_bound}
N_{n,G}(B)\ll_{n,\varepsilon}B^{n-2+2/\ell+\#(S_n/G)^{-1}+\varepsilon}.
\end{equation}
\end{proposition}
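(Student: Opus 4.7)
\emph{Proof plan.} The case $n=2$ is trivial: the only transitive subgroup of $S_2$ is $S_2$ itself, giving $D:=\#(S_n/G)=1$, and the naive count $\#\{(c,d,a_1):c,d\in\IZ\setminus\{0\},\ |c|,|d|\leq B^{1/\ell},\ |a_1|\leq B\}\ll B^{1+2/\ell}$ already meets the claimed bound. Assume henceforth $n\geq 3$. I would adapt Dietmann's Galois-resolvent approach, packaged here as Lemmas \ref{lem:discriminant_non_zero}--\ref{lem:BHB}, to the non-monic setting via the substitution $y=a_0x$, which transforms $f$ into the monic polynomial $\tilde f(y)=y^n+\tilde a_1y^{n-1}+\cdots+\tilde a_n\in\IZ[y]$ with $\tilde a_i=a_0^{i-1}a_i$ and enumerated roots $\beta_i=a_0\alpha_i$, hence with the same Galois action as $f$.

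First I would parameterize $a_0=c^\ell$, $a_n=d^\ell$ with $c,d\in\IZ\setminus\{0\}$ and $|c|,|d|\leq B^{1/\ell}$, giving $\ll B^{2/\ell}$ admissible pairs, and freely fix $a_2,\ldots,a_{n-2}\in\IZ\cap[-B,B]$, yielding $\ll B^{n-3}$ further choices. It then suffices to bound, for each fixed such tuple, the number of $(a_1,a_{n-1})\in(\IZ\cap[-B,B])^2$ producing a polynomial with Galois action contained in $G$ by $\ll_{n,\varepsilon} B^{1+1/D+\varepsilon}$. Applying Lemma \ref{lem:full_galois_Qt} with the fixed coefficients $\tilde a_2,\ldots,\tilde a_{n-2},\tilde a_n=(c^{n-1}d)^\ell\neq 0$ guarantees that outside $O_n(1)$ exceptional values of $a_1$ the one-parameter family $y^n+a_1y^{n-1}+\tilde a_2y^{n-2}+\cdots+\tilde a_{n-2}y^2+ty+\tilde a_n$ has full Galois group $S_n$ over $\IQ(t)$. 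Combined with Lemma \ref{lem:discriminant_non_zero} (forcing $\Delta_\phi(a_1,\tilde a_2,\ldots,\tilde a_{n-2},t,\tilde a_n)\neq 0$ in $\IQ(t)$ for all but $O_n(1)$ values of $a_1$), Lemma \ref{lem:resolvent_irreducible} yields irreducibility in $\IQ[z,t]$ of the specialised resolvent $\tilde\phi(z;t):=\phi(z;a_1,\tilde a_2,\ldots,\tilde a_{n-2},t,\tilde a_n)\in\IZ[z,t]$, which is monic of degree $D$ in $z$.

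To put Lemma \ref{lem:BHB} to work against a box of side length comparable to $B$ in the last variable, I would substitute $t=a_0^{n-2}s$ and work with $\Psi(z;s):=\tilde\phi(z;a_0^{n-2}s)\in\IZ[z,s]$. Since $a_0\neq 0$, this linear substitution is a $\IQ$-automorphism of $\IQ[t]$, so $\Psi$ remains irreducible over $\IQ$ and monic of degree $D$ in $z$, with total degree bounded in terms of $n$ alone. If $f$ has Galois action contained in $G$, the invariant sum $R_{\mathrm{id}}:=\sum_{\tau\in G}\beta_{\tau(1)}\beta_{\tau(2)}^2\cdots\beta_{\tau(n)}^n$ is a rational integer satisfying $\Psi(R_{\mathrm{id}};a_{n-1})=0$, and Cauchy's bound $|\alpha_i|\leq 1+B$ gives $|\beta_i|\leq B(1+B)$ and hence $|R_{\mathrm{id}}|\leq C_nB^M$ for some $M=M(n)$. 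Invoking Lemma \ref{lem:BHB} on $\Psi=0$ in the box $|z|\leq C_nB^M$, $|s|\leq B$, the monic monomial $z^D$ forces $T\geq (C_nB^M)^D$, so the weighted logarithmic factor collapses to $\log B/D$ and yields $\ll_{n,\varepsilon} B^{1/D+\varepsilon}$ integer solutions.

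Summing over the $\ll B$ generic $a_1$'s, plus the $O_n(B)$ contribution from the $O_n(1)$ exceptional $a_1$'s estimated trivially by $|a_{n-1}|\leq B$, then over the $\ll B^{2/\ell}$ pairs $(a_0,a_n)$ and the $\ll B^{n-3}$ tuples $(a_2,\ldots,a_{n-2})$, produces
\begin{equation*}
N_{n,G}(B)\ll_{n,\varepsilon} B^{2/\ell}\cdot B^{n-3}\cdot B^{1+1/D+\varepsilon}=B^{n-2+2/\ell+1/D+\varepsilon},
\end{equation*}
as desired. The main obstacle will be the weighted-height calculation of $T$ in Lemma \ref{lem:BHB}: one must confirm that the $z^D$ monomial dominates every $s$-monomial in $\Psi$. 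This should follow because $M$ depends only on $n$ while $P_2=B$, so any $s$-monomial contributes at most $B^{O_n(1)}$, dwarfed by $(B^M)^D$ once $B$ is large; a secondary, bookkeeping-style point is to ensure the $O_n(1)$ exceptional $a_1$'s of Lemmas \ref{lem:full_galois_Qt} and \ref{lem:discriminant_non_zero} remain uniformly controlled across the whole range of fixed tuples.
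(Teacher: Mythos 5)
Your overall strategy matches the paper's: pass to the monic polynomial via $y=a_0x$, invoke Lemma~\ref{lem:full_galois_Qt} to eliminate $O_n(1)$ bad leading coefficients, build the Galois resolvent $\phi(z;t)$ with $t$ in the linear slot, substitute $t=a_0^{n-2}s$, and apply Lemma~\ref{lem:BHB} using the monic $z^D$ monomial to force $T\geq P_1^D$ and hence a saving of $1/D$. This is Dietmann's method exactly as the authors use it. But there is one genuine gap in how you dispose of the degenerate discriminant cases, and one unnecessary worry.

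The gap: you assert that Lemma~\ref{lem:discriminant_non_zero} ``forces $\Delta_\phi(a_1,\tilde a_2,\ldots,\tilde a_{n-2},t,\tilde a_n)\neq 0$ in $\IQ(t)$ for all but $O_n(1)$ values of $a_1$'' once $a_2,\ldots,a_{n-2},a_n$ are fixed. That is not what the lemma gives. Lemma~\ref{lem:discriminant_non_zero} says only that, for a fixed nonzero constant term, $\Delta_\phi$ is not the zero polynomial as a function of the remaining variables $a_1,\ldots,a_{n-1}$ \emph{jointly}. It says nothing about the one-dimensional slice in $a_1$ after additionally specialising $a_2,\ldots,a_{n-2}$; for pathological choices of those inner coefficients the restriction of $\Delta_\phi$ to the $(a_1,t)$-plane could vanish identically, and then every $a_1$ would be bad, not $O_n(1)$ of them. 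Your book-keeping, which fixes $(a_2,\ldots,a_{n-2})$ first and then claims at most $O_n(1)$ bad $a_1$'s per tuple, therefore does not follow from the lemmas cited, and if the slice does degenerate you would pay a factor $B$ too much on the bad set. The correct way to handle this (and what the paper does) is to count the bad $(a_1,\ldots,a_{n-2})$-tuples jointly: once $a_0,a_n$ are fixed, some coefficient $c_j(a_1,\ldots,a_{n-2})$ of $\Delta_\phi$ as a polynomial in $t$ is nonzero, so the bad locus has $\ll_n B^{n-3}$ integer points in the box, and summing over the $\ll B$ choices of $a_{n-1}$ and the $\ll B^{2/\ell}$ pairs $(a_0,a_n)$ gives a negligible total of $\ll B^{n-2+2/\ell}$. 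With this reorganisation your argument is complete.

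A minor remark on the closing concern: you do not need the $z^D$ monomial to dominate every $s$-monomial in $\Psi$. In Lemma~\ref{lem:BHB} the quantity $T$ is a maximum over all monomials, so the presence of $z^D$ already guarantees $T\geq P_1^D$, and any larger contribution from an $s$-monomial only increases $T$ and improves the bound. That point is automatic.
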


\begin{proof}
  The result follows from Lemma \ref{countingprop} in case $n=2$, so we assume from now on that $n\geq 3$.
  Conditions (3) and (4) are invariant under replacing $f$ by 
  \begin{equation}\label{eq:g}
    a_0^{n-1}f(x/a_0)=x^n+a_1x^{n-1}+\cdots+a_0^{n-3}a_{n-2}x^2+a_0^{n-2}a_{n-1}x+a_0^{n-1}a_n,
  \end{equation}
  so we have to bound the number of $a_0,\ldots,a_n$ subject to (1) and (2), for which the polynomial in \eqref{eq:g} satisfies (3) and (4). Lemma \ref{lem:full_galois_Qt} shows that, for every choice of $a_0, a_2,\ldots,a_{n}$, there are $\ll_n 1$ choices of $a_1$ for which the polynomial
\begin{equation*}
 g(x;t)=x^n+a_1x^{n-1}+\cdots+a_0^{n-3}a_{n-2}x^2+tx+a_0^{n-1}a_n\in\IQ(t)[x]
\end{equation*}
does not have full Galois group $S_n$ over the rational function field $\IQ(t)$. The total number of $a_0,\ldots,a_n$ for which this holds is thus $\ll_{n}B^{n-2+2/\ell}$. In view of the desired bound \eqref{eq:sub_G_bound}, we may thus restrict our attention to those $a_0,\ldots,a_n$ for which
\begin{equation}\label{eq:assumpt_full_galois_group}
 g(x;t) \text{ has full Galois group $S_n$ over $\IQ(t)$}.
\end{equation}
 For these polynomials, we consider the corresponding Galois resolvents
 \begin{equation*}
\phi(z;t)=\phi(z;a_1,\ldots,a_0^{n-3}a_{n-2},t,a_0^{n-1}a_n)\in \IZ[z,t],
\end{equation*}
 defined in \eqref{eq:galois_resolvent}, and their discriminants $\Delta_\phi(t)=\Delta_\phi(a_1,\ldots,a_0^{n-3}a_{n-2},t,a_0^{n-1}a_n)\in\IZ[t]$.

Lemma \ref{lem:discriminant_non_zero} shows that, for any fixed permitted choice of $a_0, a_n$, the discriminant $\Delta_\phi(t)$ does not vanish identically as a polynomial in $a_1,\ldots,a_{n-2},t$. Hence, there are at most $\ll_n B^{n-3}$ choices of $a_1,\ldots,a_{n-2}$ with (1), for which $\Delta_\phi(t)=0$ in $\IQ(t)$. Summing this over all possible choices of $a_0,a_{n-1},a_n$ with (1) and (2), we obtain a contribution $\ll_n B^{n-2+2/\ell}$ in total, which is negligible when compared to \eqref{eq:sub_G_bound}. Hence, we may assume from now on that $\Delta_\phi(t)\neq 0$ for all our tuples $a_0,\ldots,a_n$ under consideration. In this case, together with our previous assumption \eqref{eq:assumpt_full_galois_group}, we see from Lemma \ref{lem:resolvent_irreducible} that $\phi(z,t)$ is irreducible over $\IQ$ for all choices of $a_0,\ldots,a_{n-2},a_n$. Fixing such a choice, suppose that the polynomial $g(x;a_0^{n-2}a_{n-1})$ from \eqref{eq:g} satisfies (3) and (4) for some $a_{n-1}$ subject to (1). 

Then all complex roots of $g(x;a_0^{n-2}a_{n-1})$ are distinct and moreover the Galois resolvent $\phi(z;a_0^{n-2}a_{n-1})$ has a root $z\in\IZ$. Since the roots of a complex polynomial are bounded polynomially in terms of its coefficients (see, e.g., \cite[Lemma 1]{MR2891158}), this root satisfies $|z|\leq B^{\alpha}$, for some $\alpha>0$ that depends at most on $n$. Since the polynomial $\phi(z;t)$, and thus also $\phi(z;a_0^{n-2}t)$, is irreducible over $\IQ$, we can apply Lemma \ref{lem:BHB} to bound the number of $(z,a_{n-1})\in\IZ^2$ with $|z|\leq P_1:=B^\alpha$ and $|a_{n-1}|\leq P_2:=B$ for which $\phi(z;a_0^{n-2}a_{n-1})=0$. Since the monomial $z^{\#(S_n/G)}$ appears in $\phi(z;t)$, we get $T\geq B^{\alpha\#(S_n/G)}$, and thus the number of such pairs $(z,a_{n-1})$ is
\begin{equation*}
  \ll_{n,\varepsilon} B^\varepsilon\exp\left(\frac{\alpha(\log B)^2}{\alpha\#(S_n/G)\log B}\right)=B^{\#(S_n/G)^{-1}+\varepsilon}.
\end{equation*}
Summing this over all viable choices of $a_0,\ldots,a_{n-2},a_n$ yields the bound \eqref{eq:sub_G_bound}.
\end{proof}

\begin{corollary}\label{cor:An_ext}
  Suppose $S\subset S_{\IQ,d}$ consists of all $A_d$-extensions and $\theta>d-3/2+2/\ell$. Then
\begin{alignat*}1
\Ndel(S,X)\ll_{d,\theta} X^{\theta}.
\end{alignat*}
\end{corollary}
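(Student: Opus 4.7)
The plan is to parametrize the fields $K\in S$ with $\del<X$ by their generators, as in Lemma \ref{countingprop}, and then to invoke Proposition \ref{prop:sub_G} with $n=d$ and $G=A_d$. The extra hypothesis that $K$ is an $A_d$-extension translates into a restriction on the Galois group of $D_\alpha$ which, by the proposition, buys a saving of $\#(S_d/A_d)^{-1}=1/2$ in the exponent compared to the more general Lemma \ref{countingprop}.

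More precisely, I would let $P_S$ denote the set of all $\alpha\in\Qbar$ with $\IQ(\alpha)\in S$ and $\alpha\Oseen_{\IQ(\alpha)}=(\ppp_1\ppp_2^{-1})^\ell$ for distinct prime ideals $\ppp_i$ of $\Oseen_{\IQ(\alpha)}$ satisfying $e(\ppp_i)=f(\ppp_i)=1$. Exactly as in the proof of Lemma \ref{countingprop}, the map $\alpha\mapsto\IQ(\alpha)$ from $\{\alpha\in P_S:H_{\IQ(\alpha)}(\alpha)\leq X\}$ surjects onto $\{K\in S:\del\leq X\}$, so it suffices to bound the cardinality of the domain. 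By Lemma \ref{lem:alpha_prop}, every such $\alpha$ has minimal polynomial of the shape \eqref{minpoly}, whose coefficients are bounded in absolute value by $2^d X$; and since $\IQ(\alpha)\in S$ is an $A_d$-extension, the Galois group of $D_\alpha$ acts on its roots as $A_d$.

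I would then apply Proposition \ref{prop:sub_G} with $n=d$, $G=A_d$, and $B=2^d X$. Since $\#(S_d/A_d)=2$, the proposition gives
\begin{equation*}
\Ndel(S,X)\ll_{d,\varepsilon}X^{d-2+2/\ell+1/2+\varepsilon}=X^{d-3/2+2/\ell+\varepsilon}
\end{equation*}
for every $\varepsilon>0$. The one minor wrinkle is that the constant term of $D_\alpha$ is $\pm q^\ell$ with $q$ a prime, which for even $\ell$ combined with the negative sign is not literally an $\ell$-th power as required by condition (2) of the proposition. This is not a genuine obstacle: condition (2) enters the proof of Proposition \ref{prop:sub_G} only via the upper bound $\ll_\ell B^{2/\ell}$ on the number of admissible pairs $(a_0,a_n)$, which remains valid when $a_n$ is allowed to be $\pm$ an $\ell$-th power (one treats the two signs separately, costing only a factor of two).

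For any $\theta>d-3/2+2/\ell$, choosing $\varepsilon=\theta-(d-3/2+2/\ell)>0$ then yields $\Ndel(S,X)\ll_{d,\theta}X^\theta$ as required. The main substantive step, namely extracting the factor $\#(S_d/A_d)^{-1}=1/2$ from the $A_d$-structure, has already been packaged into Proposition \ref{prop:sub_G}, so the present corollary is essentially a direct application of that machinery; the sign issue for even $\ell$ is the only genuinely new technicality, and a trivial one at that.
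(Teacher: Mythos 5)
Your proof is correct and follows essentially the same route as the paper: restrict the parametrizing minimal polynomials $D_\alpha$ from Lemma \ref{countingprop} to those with Galois group $A_d$ and apply Proposition \ref{prop:sub_G} with $G=A_d$, $B=2^dX$. The sign subtlety you flag for the constant term $\pm q^\ell$ (when $\ell$ is even) is a real but trivial slip that the paper passes over silently, and your fix is the right one.
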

\begin{proof}
  This is analogous to the proof of Lemma \ref{countingprop}, except
  that the relevant polynomials are now counted by Proposition \ref{prop:sub_G}
  instead of the trivial argument at the end of that proof.
  
  Let $P_S$ be the
  set of all $\alpha\in\Qbar$ such that $\IQ(\alpha)\in S$ and
  $\alpha\Oseen_{\IQ(\alpha)}=(\ppp_1\ppp_2^{-1})^{\ell}$, for prime ideals
  $\ppp_1\neq\ppp_2$ of $\Oseen_{\IQ(\alpha)}$ with $e(\ppp_i)=f(\ppp_i)=1$ for
  $i=1,2$. By Lemma \ref{lem:alpha_prop}, every field counted by $\Ndel(S,X)$ is of the
  form $\IQ(\alpha)$ for some $\alpha\in P_S$ with $H_{\IQ(\alpha)}(\alpha)\leq
  X$. By \eqref{minpoly} and the fact that $\IQ(\alpha)$ is an $A_d$-extension
  of $\IQ$, we see that the minimal polynomial $D_\alpha$ of $\alpha$ is
  counted by $N_{d,A_d}(2^dX)$. Propostion \ref{prop:sub_G} now shows that 
  \begin{equation*}
\Ndel(S,X)\ll_d N_{d,A_d}(2^dX)\ll_{d,\theta}X^{\theta}.
\end{equation*}
\end{proof}

\begin{corollary}\label{cor:D5_ext}
  Suppose $S\subset S_{\IQ,5}$ consists of all $D_5$-extensions and $\theta>3+1/12+2/\ell$. Then
\begin{alignat*}1
\Ndel(S,X)\ll_{\theta} X^{\theta}.
\end{alignat*}
\end{corollary}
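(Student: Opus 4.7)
The plan is to mirror the short proof of Corollary \ref{cor:An_ext} almost verbatim, the only real change being the value of the index $\#(S_n/G)$. Specifically, I define $P_S$ as in the proof of Lemma \ref{countingprop}, but now with the restriction that $\IQ(\alpha)$ is a $D_5$-extension. Then by Lemma \ref{lem:alpha_prop} each such $\alpha$ has $K=\IQ(\alpha)$ of degree $5$, and its minimal polynomial $D_\alpha\in\IZ[x]$ has the shape \eqref{minpoly}, in particular its leading and constant coefficients are, up to sign, $\ell$-th prime powers. As in the proof of Lemma \ref{countingprop}, the map $\alpha\mapsto \IQ(\alpha)$ sends $\{\alpha\in P_S\: ;\: H_{\IQ(\alpha)}(\alpha)\leq X\}$ surjectively onto $\{K\in S\: ;\: \del\leq X\}$, and the coefficient vector of $D_\alpha$ has sup-norm $\leq 2^{5}X$.

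Next, since $K$ is a $D_5$-extension, the Galois group of the splitting field of $D_\alpha$ acts on the ordered tuple of roots as $D_5\subset S_5$, so $D_\alpha$ satisfies conditions (1)--(4) of Proposition \ref{prop:sub_G} with $n=5$, $G=D_5$, and $B=2^5X$. Since $|D_5|=10$ and $|S_5|=120$, we have $\#(S_5/D_5)=12$, hence $\#(S_5/D_5)^{-1}=1/12$. Proposition \ref{prop:sub_G} therefore yields
\begin{equation*}
\Ndel(S,X)\;\leq\;N_H(P_S,X)\;\leq\;N_{5,D_5}(2^5 X)\;\ll_{\varepsilon}\;X^{5-2+2/\ell+1/12+\varepsilon}\;=\;X^{3+1/12+2/\ell+\varepsilon}
\end{equation*}
for every $\varepsilon>0$. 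Given any $\theta>3+1/12+2/\ell$ we choose $\varepsilon=\theta-(3+1/12+2/\ell)>0$, which gives the desired bound $\Ndel(S,X)\ll_\theta X^\theta$.

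There is no real obstacle here: all of the heavy lifting has already been done in Lemma \ref{lem:alpha_prop} (which forces the rigid shape of the minimal polynomial) and in Proposition \ref{prop:sub_G} (which packages the Galois resolvent argument together with the Bombieri--Pila bound of Lemma \ref{lem:BHB}). The only thing one has to verify is that a $D_5$-extension in the sense of Theorem \ref{thm:D5} really corresponds to a polynomial whose Galois group, in its action on the five roots, is $D_5$ as a subgroup of $S_5$, which is immediate from the definition of a $D_5$-extension as a degree-$5$ field whose normal closure has Galois group $D_5$ acting on the five embeddings.
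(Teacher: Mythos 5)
Your proof is correct and follows exactly the route the paper takes: the paper's own proof of this corollary simply says it is analogous to Corollary~\ref{cor:An_ext} with $\#(S_5/D_5)=12$, which is precisely the computation you carry out in detail. Your more careful spelling-out of the reduction to Proposition~\ref{prop:sub_G} (via Lemma~\ref{lem:alpha_prop} and the injection of $\{K\in S:\del\leq X\}$ into the set of admissible minimal polynomials of height $\leq 2^5 X$) matches the paper's intent.
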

\begin{proof}
  The proof is analogous to Corollary \ref{cor:An_ext}. Note that $\#(S_5/D_5)=12$.
\end{proof}

\begin{corollary}\label{cor:D4_ext}
  Suppose $S\subset S_{\IQ,4}$ consists of all $D_4$-extensions and $\theta>2+1/3+2/\ell$. Then
\begin{alignat*}1
\Ndel(S,X)\ll_{\theta} X^{\theta}.
\end{alignat*}
\end{corollary}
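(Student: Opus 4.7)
The plan is to mirror the proofs of Corollary \ref{cor:An_ext} and Corollary \ref{cor:D5_ext} verbatim, plugging in the parameters appropriate to $D_4 \subset S_4$ in its natural degree-$4$ permutation representation. Since $\#S_4/\#D_4 = 24/8 = 3$, we have $\#(S_4/D_4)^{-1} = 1/3$, and Proposition \ref{prop:sub_G} with $n = 4$, $G = D_4$ will output an exponent
\[
n - 2 + \frac{2}{\ell} + \frac{1}{\#(S_n/G)} + \varepsilon \;=\; 2 + \frac{2}{\ell} + \frac{1}{3} + \varepsilon,
\]
which is precisely the threshold appearing in the statement.

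Concretely, I would let $P_S$ denote the set of $\alpha \in \Qbar$ such that $\IQ(\alpha) \in S$ and $\alpha \Oseen_{\IQ(\alpha)} = (\ppp_1 \ppp_2^{-1})^\ell$ for distinct prime ideals $\ppp_1 \neq \ppp_2$ with $e(\ppp_i) = f(\ppp_i) = 1$, exactly as in the proof of Lemma \ref{countingprop}. By Lemma \ref{lem:alpha_prop}, the assignment $\alpha \mapsto \IQ(\alpha)$ then surjects $\{\alpha \in P_S : H_{\IQ(\alpha)}(\alpha) \leq X\}$ onto $\{K \in S : \del \leq X\}$, which yields $\Ndel(S, X) \leq N_H(P_S, X)$.

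Next, I would verify that each minimal polynomial $D_\alpha$ of such an $\alpha$ is counted by $N_{4, D_4}(2^4 X)$: conditions (1) and (2) of Proposition \ref{prop:sub_G} follow from the shape \eqref{minpoly} and the standard height–coefficient bound $\|D_\alpha\|_\infty \leq 2^d H_{\IQ(\alpha)}(\alpha)$; condition (3) is just irreducibility of the minimal polynomial; and condition (4) holds because, by definition, the normal closure of a $D_4$-field has Galois group $D_4$ acting on the four embeddings of $\IQ(\alpha)$, i.e., on the roots of $D_\alpha$. Applying Proposition \ref{prop:sub_G} then gives
\[
\Ndel(S, X) \;\leq\; N_H(P_S, X) \;\leq\; N_{4, D_4}(2^4 X) \;\ll_\varepsilon\; X^{2 + 1/3 + 2/\ell + \varepsilon},
\]
and choosing $\varepsilon$ smaller than the slack $\theta - (2 + 1/3 + 2/\ell) > 0$ concludes the proof.

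There is no real obstacle: all the heavy lifting (the diophantine count of Bombieri–Pila type via Lemma \ref{lem:BHB}, the Hilbert irreducibility input, and the restriction of leading/trailing coefficients to $\ell$-th powers via Lemma \ref{lem:alpha_prop}) is already bundled into Proposition \ref{prop:sub_G}. The only point to double-check is the matching of permutation representations, namely that a quartic whose normal closure has Galois group $D_4$ corresponds to the standard degree-$4$ transitive embedding $D_4 \hookrightarrow S_4$ used in Proposition \ref{prop:sub_G}, which is immediate from the definition of a $D_4$-quartic field.
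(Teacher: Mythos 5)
Your proposal is correct and matches the paper's own argument, which simply points to the proof of Corollary~\ref{cor:An_ext} and notes $\#(S_4/D_4)=3$; you have filled in the routine verifications (Lemma~\ref{lem:alpha_prop}, conditions (1)--(4) of Proposition~\ref{prop:sub_G}) that the paper leaves implicit.
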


\begin{proof}
  Again, the proof is analogous to Corollary \ref{cor:An_ext}. Note that $\#(S_4/D_4)=3$.
\end{proof}

\section{Bounding the number of bad fields}\label{sec:badfields}
Recall that $d>1$ is an integer,
$S_{\IQ,d}=\{K\subset \Qbar\: ;\: [K:\IQ]=d\}$,
and for  $S\subset S_{\IQ,d}$
we defined $\B_S(X;Y,M)$ as the set
\begin{alignat*}1
\{K\in S\: ;\: X\leq \D< 2X, \text{ at most $M$ primes $p\leq Y$ split completely in $K$}\}.
\end{alignat*}

\begin{lemma}\label{lem:GRH}
  Let $d\geq 2$, and let $S\subset S_{\IQ,d}$ be a family of degree-$d$-fields. Suppose that the Riemann hypothesis holds for the Dedekind zeta function of the normal closure of each field in $S$. Then for every $\delta>0$ there exists $c=c(d,\delta)>0$ such that
  \begin{equation*}
    \#\B_S(X;X^\delta,c X^\delta/\log X) \ll_{d,\delta} 1.
  \end{equation*}
\end{lemma}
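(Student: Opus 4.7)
The plan is to apply the GRH-conditional effective Chebotarev density theorem of Lagarias--Odlyzko to the Galois closure $\tilde K$ of each $K \in S$. Writing $G = \Gal(\tilde K/\IQ)$, so that $|G| \leq d!$, a rational prime $p$ splits completely in $K$ if and only if it is unramified in $\tilde K$ with trivial Frobenius class in $G$. Under GRH for $\zeta_{\tilde K}$, Lagarias--Odlyzko therefore yields, for every $Y \geq 2$,
\begin{equation*}
\#\{p \leq Y \,;\, p \text{ splits completely in } K\} = \frac{\mathrm{Li}(Y)}{|G|} + O\bigl(\sqrt{Y}(\log D_{\tilde K} + |G|\log Y)\bigr),
\end{equation*}
with an absolute implicit constant.

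Next I would invoke the standard comparison $\log D_{\tilde K} \ll_d \log \D$, which follows from the fact that the rational primes ramified in $\tilde K$ are precisely those ramified in $K$ and that the local discriminant exponents at each such prime are bounded purely in terms of $[\tilde K : \IQ] \leq d!$. For $K \in \B_S(X;\cdot,\cdot)$ we have $\D < 2X$, so $\log D_{\tilde K} \ll_d \log X$. Specialising to $Y = X^\delta$, the main term above is then at least $\tfrac{X^\delta}{d!\,\delta\,\log X}(1 + o_\delta(1))$, whereas the error is $\ll_d X^{\delta/2}\log X$.

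Choosing a sufficiently small $c = c(d,\delta) > 0$ (for instance $c = 1/(4\,d!\,\delta)$), the main term will exceed the error by more than $cX^\delta/\log X$ for every $X$ above a threshold $X_0 = X_0(d,\delta)$. Hence every $K \in S$ with $X \leq \D < 2X$ has strictly more than $cX^\delta/\log X$ primes $p \leq X^\delta$ splitting completely, so $\B_S(X; X^\delta, cX^\delta/\log X) = \emptyset$ for all $X \geq X_0$. For the remaining range $2 \leq X < X_0$, any $K$ in $\B_S(X;\cdot,\cdot)$ satisfies $\D < 2X_0(d,\delta)$, so Hermite's finiteness theorem bounds the number of such degree-$d$ fields in terms of $d$ and $\delta$ alone; combining the two ranges gives the claimed $\ll_{d,\delta} 1$. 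The argument is essentially routine, and the only step requiring any genuine care is verifying that the $|G|$- and $D_{\tilde K}$-dependence in the Lagarias--Odlyzko bound can be absorbed into constants depending only on $d$ and $\delta$, which is exactly what the comparison $\log D_{\tilde K} \ll_d \log \D$ supplies.
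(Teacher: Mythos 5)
Your argument is correct and is exactly the route the paper takes: the paper's proof simply cites the GRH-conditional effective Chebotarev density theorem of Lagarias--Odlyzko as giving the claim immediately, and your write-up fills in precisely the details (applying the theorem to the Galois closure, absorbing $\log D_{\tilde K}$ via $\log D_{\tilde K} \ll_d \log \D$, choosing $c$ so that the main term dominates for $X \geq X_0(d,\delta)$, and invoking Hermite for the finite remaining range).
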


\begin{proof}
  This is an immediate consequence of the conditional effective version of Chebotarev's density theorem due to Lagarias and Odlyzko \cite{lo1977}.
\end{proof}

\begin{theorem}(\cite[Theorem 2.1]{EllenbergPierceWood})\label{exceptionalfields1}
Let $d\in \{3,4,5\}$, let $S=S_{\IQ,d}$ if $d\neq 4$ and $S=S^*_{\IQ,4}$ the family of all quartic non-$D_4$ fields, if $d=4$, and let  $\varepsilon>0$. Recall the definition of $\delta_0(d)$ (just before Theorem \ref{thm3}), and put
\begin{alignat*}1
\delta_0=\delta_0(d).
\end{alignat*}
Then for every $0<\delta\leq \delta_0$ there exists $c=c(\delta)>0$  such that 
\begin{alignat*}1
\#\B_S(X;X^{\delta},c X^{\delta}/\log X)\ll_{\delta, \varepsilon} X^{1-\delta+\varepsilon}.
\end{alignat*}
\end{theorem}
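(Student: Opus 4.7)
The plan is to follow the probabilistic strategy of Ellenberg--Pierce--Wood. Start by fixing $K\in \B_S(X;X^{\delta},cX^{\delta}/\log X)$; by definition, at most $cX^{\delta}/\log X$ rational primes $p\leq X^{\delta}$ split completely in $K$, while the prime number theorem provides $(1+o(1))X^{\delta}/\log X$ primes $p\leq X^{\delta}$ in total. For sufficiently small $c=c(d)$, the number of completely split primes for any $K\in\B_S$ falls well below the Chebotarev expectation of $X^{\delta}/(d!\log X)$, by a definite positive proportion of $X^{\delta}/\log X$.

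I would then raise this deviation to a large even power $2q$ and sum over $K\in \SX$, obtaining
\[
\#\B_S(X;X^{\delta},cX^{\delta}/\log X)\cdot\Bigl(\tfrac{X^{\delta}}{\log X}\Bigr)^{2q}\;\ll\;\sum_{K\in \SX}\Bigl(\sum_{p\leq X^{\delta}}\bigl(\mathbf{1}[p\text{ splits in }K]-\tfrac{1}{d!}\bigr)\Bigr)^{2q}.
\]
Expanding the inner $2q$-th power and exchanging the order of summation reduces the problem to bounding, for each squarefree modulus $m\leq X^{2q\delta}$, the number of $K\in \SX$ in which every prime dividing $m$ splits completely (plus analogous counts with mixed conditions that arise when terms of the multinomial expansion are combined). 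For $d\in\{2,3,4,5\}$, the required counts are supplied by the uniform asymptotics for degree-$d$-fields with $\D\leq X$ subject to a prescribed local condition at a squarefree modulus, descending from Gau\ss{} for $d=2$, and from the work of Davenport--Heilbronn, Belabas--Bhargava--Pomerance and Bhargava for $d\in\{3,4,5\}$. These inputs provide a main term of order $X$ with an error that saves a uniform power of $m$.

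Substituting these estimates, bounding the resulting sum over $m$ by a divisor-type sum, and optimising $q$ in terms of $\delta$, one arrives at $\#\B_S(X;X^{\delta},cX^{\delta}/\log X)\ll_{\delta,\varepsilon} X^{1-\delta+\varepsilon}$. The explicit numbers $\delta_0(3)=2/25$, $\delta_0(4)=1/48$, $\delta_0(5)=1/200$ emerge from this optimisation and reflect the numerical strength of the power-savings available in the underlying field-counting asymptotics. The principal obstacle is precisely the demand for field-counting estimates uniform in the squarefree modulus $m$; for $d=4,5$ this rests on the deepest parts of Bhargava's parametrisation theory, and it is this input that ultimately caps the admissible range of $\delta_0(d)$.
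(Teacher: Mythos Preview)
The paper does not supply its own proof of this statement: it is quoted verbatim as \cite[Theorem 2.1]{EllenbergPierceWood} and used as a black box in \S\ref{sec:badfields}. Your sketch is therefore not to be compared against anything in the present paper, but against the argument in \cite{EllenbergPierceWood}, and in that respect it is an accurate outline of the Ellenberg--Pierce--Wood method: raise the centred splitting indicator to a high even moment, expand, swap the order of summation, and feed in the field-counting asymptotics with power-saving error terms uniform in a squarefree splitting modulus (Davenport--Heilbronn and Bhargava, with the uniformity due to Belabas--Bhargava--Pomerance and subsequent work). The values of $\delta_0(d)$ indeed arise from optimising the moment order against the available power saving.

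Two minor imprecisions worth flagging. First, the theorem concerns only $d\in\{3,4,5\}$, so the mention of Gau\ss{} for $d=2$ is extraneous. Second, the centring constant is not $1/d!$ in general: the family $S$ contains fields with various Galois groups (e.g.\ $S_3$ and $A_3$ for $d=3$), and the Chebotarev density of completely split primes depends on the Galois group of the normal closure. In \cite{EllenbergPierceWood} this is handled by summing over the relevant Galois types (or, equivalently, by an appropriate average splitting density); your sketch glosses over this point but it does not affect the structure of the argument.
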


Consider families $S=S(G,\J)\subset S_{\IQ,d}$ of fields $K$ whose normal closure $\tilde{K}$  has 
Galois group $G$, and  such that for each rational prime $p$ that
is tamely ramified in $K$, its ramification is of type $\J$, where $\J$ specifies one or more conjugacy
classes in $G$. By this we mean the inertia group $I(\mathfrak{B})\subset G$ of any prime ideal $\mathfrak{B}\subset \Oseen_{\tilde{K}}$ above $p$ 
(which is cyclic if $p$ is tamely ramified in $K$) is generated by an element in the conjugacy class (or classes) specified by $\J$ (see \cite[\S 1.2.1]{PTW}). The following result collects some special cases of \cite[Corollary 3.16]{PTW}.

\begin{theorem}[Pierce, Turnage-Butterbaugh, Wood]\label{exceptionalfields2}
Let $\varepsilon>0$, let $S=S(G,\J)\subset S_{\IQ,d}$ be from one of the following five families, and let $\tau=\tau_S$ as below.
Then for every $\delta>0$ there exists $c=c(S,\delta)>0$ such that
\begin{alignat*}1
\#\B_S(X;X^{\delta},c X^{\delta}/\log X)\ll_{S,\delta,\cz, \tau,\varepsilon} X^{\tau+\varepsilon}.
\end{alignat*}
\begin{enumerate}
\item[1.] $G$ is a cyclic group of order $d\geq 2$ with $\J$ comprised of all generators of $G$ (equivalently,
every rational prime that is tamely ramified in $K$ is totally ramified), and $\tau=0$.
\item[2.] $d$ is an odd prime, and $G=D_d$ the Dihedral group of symmetries
  of a regular $d$-gon, with $\J$ being the conjugacy class of reflections and $\tau=1/(p-1)$. 
\item[3.] $d\geq 5$, $G=A_d$ and $\J = G$ (so no restriction on inertia type), and $\tau=0$. Moreover, assume that
the strong Artin conjecture holds for all irreducible Galois representations over $\IQ$ with image $A_d$.
\item[4.] $d\in \{3,4\}$, $G=S_d$, with $\J$ being the conjugacy class of transpositions, and $\tau=1/3$ if $d=3$ and $\tau=1/2$ if $d=4$.
\item[5.] $d\geq 5$, $G=S_d$, with $\J$ being the conjugacy class of transpositions, and the following two conditions hold: 
  \begin{enumerate}
  \item[(i)] the strong Artin conjecture holds for all
    irreducible Galois representations over $\IQ$ with image
    $S_d$,
  \item[(ii)] $\tau$ and $\cz$ are numbers such that $\tau<1$ if $d=5$ and $\tau<1/2+1/d$ if $d\geq 6$, and for every fixed integer $D$ there are at most $\cz D^{\tau}$
    fields $K\in S$ with $\D= D$. 
\end{enumerate}
\end{enumerate}
\end{theorem}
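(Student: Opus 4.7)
The plan is to derive each of the five cases directly from \cite[Corollary 1.17.1]{PTW} by checking that the hypotheses of that corollary hold for the family $S=S(G,\J)$ at hand, and then reading off the admissible value of $\tau$ from the inputs. The PTW corollary takes as input (a) an effective form of Chebotarev's density theorem for the fields in $S$, obtained via the strong Artin conjecture for the irreducible Galois representations of the normal closures, and (b) an upper bound of the form $\#\{K\in S:\D=D\}\ll D^\tau$ for all $D$ outside a controlled exceptional set. Given both inputs, it yields the bound $\#\B_S(X;X^{\delta},cX^{\delta}/\log X)\ll_{S,\delta,\varepsilon} X^{\tau+\varepsilon}$.

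For input (a), cases 3 and 5 assume the strong Artin conjecture explicitly. In case 1 the group $G$ is cyclic, hence abelian, so every irreducible Galois representation is a Hecke character and the required automorphy is classical class field theory. In case 2 the non-abelian irreducible representations of $D_d$, with $d$ an odd prime, are two-dimensional and monomial, being induced from characters of the cyclic subgroup of order $d$; automorphy is then a theorem of Hecke. In case 4 the strong Artin conjecture for $S_3$ and $S_4$ is known unconditionally by the work of Langlands--Tunnell and Kim.

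For input (b), one needs a pointwise upper bound on the fibers of the discriminant map restricted to $S$. In case 1, tame ramification is constrained to be total, and the resulting abelian fields are classified by their conductor and a choice of character, giving $\tau=0$. In case 2, the parametrisation of dihedral fields of prime degree via their quadratic resolvent together with a count of ring class characters of appropriate order yields the stated $\tau$. In case 3 the condition $\J=G$ imposes no tame ramification restriction, so no exceptional set is needed beyond that which the corollary already treats, and $\tau=0$ is admissible. In case 4 the stated values $\tau=1/3$ and $\tau=1/2$ follow from the asymptotic counts of Davenport--Heilbronn \cite{81} and Bhargava \cite{Bhargava05} by a standard argument bounding the maximum multiplicity of a discriminant value. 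Case 5 simply imports its pointwise bound from the hypotheses.

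The main obstacle is notational bookkeeping: one must translate the ramification specification $\J$ into the conjugacy-invariant data used in \cite{PTW}, match the conjugacy classes of reflections / transpositions / generators to the appropriate slots in their framework, and check that the exceptional-set contribution does not exceed $X^{\tau+\varepsilon}$ in any of the five regimes. Once this matching is done, no new analytic estimates are required beyond those already assembled in \cite{PTW} and the automorphy results quoted above; the theorem is a curated restatement of \cite[Corollary 1.17.1]{PTW} tailored to the specific families that will appear in \S\ref{proofs}.
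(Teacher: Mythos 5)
Your core approach matches the paper exactly: the paper offers no proof of this theorem, simply stating that it ``collects some special cases of \cite[Corollary 1.17.1]{PTW}'' and attributing the result to Pierce, Turnage-Butterbaugh and Wood in the theorem header. Your closing observation that ``the theorem is a curated restatement of \cite[Corollary 1.17.1]{PTW} tailored to the specific families'' is precisely what the authors intend.

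The bulk of your write-up, however, attempts to reverse-engineer and re-verify the internal hypotheses of the PTW corollary (automorphy inputs, fiber bounds on the discriminant map, translation of $\J$ into their framework). This is not what the paper does, nor does it need to: the cited corollary is already stated directly for each of these five families with the admissible $\tau$ values built in, so there is nothing left to check beyond matching labels. Because this extra material is doing no work in the derivation, the minor inaccuracies in it do not break anything, but they are worth flagging: the strong Artin conjecture for $S_4$ is not a consequence of ``Langlands--Tunnell and Kim'' in the way you suggest (those results concern two-dimensional representations; $S_4$ also has three-dimensional irreducible representations whose automorphy requires further input, and PTW in fact use a different route for $d=3,4$); the claimed mechanism for the dihedral $\tau=1/(d-1)$ via ring class characters is not how PTW obtain it; and the values $\tau=1/3,\,1/2$ in case 4 come from pointwise discriminant-multiplicity bounds, which are not the same thing as the asymptotic counts of Davenport--Heilbronn and Bhargava that you cite. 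If you want the review to be a proof rather than a citation, you would need to reproduce PTW's actual arguments; otherwise it is cleaner to state, as the paper does, that the theorem is a direct specialisation of \cite[Corollary 1.17.1]{PTW} and leave it at that.
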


For the families $\Sab$ in Theorem \ref{thm:D4}, we have the following bounds, which follow from \cite[Theorem 1.6 and Proposition 6.1]{An}.

\begin{theorem}[An]\label{exceptionalfields3}
  Let $\varepsilon>0$, and let $a,b\in\IZ\smallsetminus\{0,1\}$ be distinct squarefree numbers. Then for every $\delta>0$ there exists $c=c(a,b,\delta)>0$ such that
\begin{alignat*}1
\#\B_{\Sab}(X;X^{\delta},c X^{\delta}/\log X)\ll_{a,b,\delta,\varepsilon} X^{\varepsilon}.
\end{alignat*}
\end{theorem}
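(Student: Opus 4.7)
The plan is to reduce the statement directly to the two results of An cited immediately after the theorem. The strategy follows the standard dichotomy underlying all bounds of this shape in the literature: either the Dedekind zeta function $\zeta_{\tilde K}$ of the normal closure $\tilde K$ of $K\in\Sab$ admits a classical zero-free region near $s=1$, in which case effective Chebotarev produces many small primes split completely in $K$, or else $\zeta_{\tilde K}$ has a Siegel zero, and one shows that very few fields $K\in\Sab$ can fall into this exceptional set.

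For the first half, I would invoke \cite[Theorem 1.6]{An}, An's effective Chebotarev density theorem tailored to the normal closures $\tilde K$ with $\Gal(\tilde K/\IQ)=D_4$ containing the biquadratic field $\IQ(\sqrt{a},\sqrt{b})$. Applied to the trivial conjugacy class in $\Gal(\tilde K/\IQ)$, this result gives that whenever $\zeta_{\tilde K}$ has no Siegel zero, the number of rational primes $p\leq Y$ that split completely in $K$ is at least $c'(a,b,\delta) Y/\log Y$ for all $Y\geq X^\delta$ once $X$ is sufficiently large in terms of $a,b,\delta$. Choosing $c:=c'(a,b,\delta)/2$, every $K\in\B_{\Sab}(X;X^\delta,c X^\delta/\log X)$ must therefore have a Siegel zero for $\zeta_{\tilde K}$.

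For the second half, I would appeal to \cite[Proposition 6.1]{An}, which bounds the number of $K\in\Sab$ with $\D<2X$ whose zeta function of the normal closure has a Siegel zero by $\ll_{a,b,\varepsilon}X^\varepsilon$. Combining the two estimates yields the claimed bound, after noting that the set $\B_{\Sab}(X;X^\delta,cX^\delta/\log X)$ contains only discriminants in a dyadic range, so the hypotheses of Proposition 6.1 apply directly. The main technical subtlety, already handled within \cite{An}, is the uniform control of the location of any Siegel zero and the sparsity bound for exceptional fields; the only verification left on our side is to match the parameters $Y=X^\delta$ and $M=cX^\delta/\log X$ appearing in the definition of $\B_{\Sab}(X;Y,M)$ with the quantitative strength of An's Chebotarev statement, which is immediate from the standard form of effective Chebotarev once a Siegel-free zero-free region is available.
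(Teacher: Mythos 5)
Your proposal is correct and takes essentially the same route as the paper, which simply states that the theorem ``follows from \cite[Theorem 1.6 and Proposition 6.1]{An}'' without further elaboration. You have filled in the standard dichotomy (Siegel zero versus Siegel-free zero-free region, the latter giving many small split primes via An's effective Chebotarev theorem, the former handled by An's bound on the number of exceptional fields in $\Sab$), which is exactly what the paper's citation is meant to invoke.
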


\section{Proofs of theorems}\label{proofs}

Each of our Theorems follows immediately from one of the Propositions \ref{mainprop1} or \ref{mainprop2} with suitable parameters, combined with a simple application of dyadic summation.

\subsection{Proof of Theorem \ref{thm2}}
Apply Proposition \ref{mainprop2} with 
 $\theta=1+2/\ell$ (by Lemma \ref{countingprop}), $\rho=1$, and $\tau=0$ (by Theorem \ref{exceptionalfields2}).
 
\subsection{Proof of Theorem \ref{thm3}}
Apply Proposition \ref{mainprop1} with
 $\theta=d-1+2/\ell$ (by Lemma  \ref{countingprop}), $\rho=1$ (by \cite{81, Bhargava05, Bhargava10}), and $\delta_0=\delta_0(d)$ (by Theorem \ref{exceptionalfields1}).

\subsection{Proof of Theorem \ref{thm:GRH}}
Apply Propostion \ref{mainprop2} with $\theta=d-1+2/\ell$ (by Lemma \ref{countingprop}) and $\tau=0$ (by Lemma \ref{lem:GRH}).
 
\subsection{Proof of Theorem \ref{thm4}}
For sufficiently small $\varepsilon'>0$, we apply Proposition \ref{mainprop2} with $\theta=3/2+2/\ell+\varepsilon'$ (by Corollary \ref{cor:An_ext}), $\rho=1/2$ (by \cite{Wright89}), and $\tau=0$ (by Theorem \ref{exceptionalfields2}).

\subsection{Proof of Theorem \ref{thm:D5}}
For sufficiently small $\varepsilon'>0$, we apply Proposition \ref{mainprop2} with $\theta=3+1/12+2/\ell+\varepsilon'$ (by Corollary \ref{cor:D5_ext}) and $\tau=1/4$ (by Theorem \ref{exceptionalfields2}).

\subsection{Proof of Theorem \ref{thm:D4}}
For sufficiently small $\varepsilon'>0$, we apply Proposition \ref{mainprop2} with $\theta=2+1/3+2/\ell+\varepsilon'$ (by Corollary \ref{cor:D4_ext}) and $\tau=0$ (by Theorem \ref{exceptionalfields3}).

\subsection{Proof of Theorem \ref{thm5}}
First we note (cf. \cite[Lemma 6.9]{PTW}) that for each $S_d$-extension of degree $d$ with squarefree  discriminant, 
the  ramification type of each ramified prime $p$  that is tamely ramified is the conjugacy class of transpositions.
Now apply Proposition \ref{mainprop2} with $\theta=d-1+2/\ell$ (by Lemma \ref{countingprop}) and $\tau$ as in the statement of the theorem (by Theorem \ref{exceptionalfields2}).

\subsection{Proof of Theorem \ref{thm:Ad}}
For sufficiently small $\varepsilon'>0$, we apply Propostion \ref{mainprop2} with $\theta=d-3/2+2/\ell+\varepsilon'$ (by Corollary \ref{cor:An_ext}) and $\tau=0$ (by Theorem \ref{exceptionalfields2}).

\section{Upper bounds for Dihedral extensions}\label{Dhe}
The aim of this section is to prove Corollary \ref{cor1}. In the proof of \cite[Theorem 2.5]{Klueners}, Kl\"uners has shown the estimates 
\begin{alignat*}1
N(p,D_p,X)&\leq \sum_{\D^{(p-1)/2} b^{p-1}\leq X}\frac{p^{\omega(b)+r_K}-1}{p -1},\\
N(2p,D_{p}(2p),X)&\leq \sum_{\D^p b^{2(p-1)}\leq X}\frac{p^{\omega(b)+r_K}-1}{p -1},
\end{alignat*}
where both sums are taken over positive integers $b$ and  quadratic fields $K$ with $\D$ in the indicated range, $\omega(b)$ denotes the number of distinct prime divisors of $b$,
and $r_K$ is the $p$-rank of $\Cl_K$, so that  $p^{r_K}= \#\Cl_K[p]$. 
For the first sum we find
\begin{alignat*}1
N(p,D_p,X)\leq \sum_{\D^{(p-1)/2}b^{p-1}\leq X}\frac{p^{\omega(b)+r_K}-1}{p -1}\leq\sum_{b^{p-1}\leq X}p^{\omega(b)}
\sum_{\D\leq X^{2/(p-1)}/{b^{2}}}\#\Cl_K[p].
\end{alignat*}
Plugging in the bound from Theorem \ref{thm2} in case $k=1$ proves the claim for $N(p,D_p,X)$. The second sum is handled 
similarly.

\section*{Acknowledgments}
The authors are grateful to the referee for their careful reading and their valuable comments that significantly improved the exposition of the paper.

\bibliographystyle{alpha}
\bibliography{literature}

\end{document}